\newtheoremstyle{Stil}{\item[ \textbf{##1 ##2. }]}{\item[\textbf{##1 ##2 (##3).}]}
\theoremstyle{Stil}
\newtheorem{Satz}{Theorem}[section]
\newtheorem{Bem}[Satz]{Remark}
\newtheorem{Thm}[Satz]{Theorem}
\newtheorem{Lem}[Satz]{Lemma}
\newtheorem{Kor}[Satz]{Corollary}
\newtheorem{Assump}[Satz]{Assumption}
\newtheorem{Example}[Satz]{Example}
\newtheoremstyle{StilB}{\item[ \textit{##1: }]}{\item[ \textit{##3:}]}
\theoremstyle{StilB}
\newtheorem{proof}{Proof}
\providecommand{\keywords}[1]{\textbf{\textit{Key words---}} #1}
\providecommand{\MSC}[1]{\textbf{\textit{MSC---}} #1}
\newcommand*{\centerfloat}{
	\parindent \z@
	\leftskip \z@ \@plus 1fil \@minus \textwidth
	\rightskip\leftskip
	\parfillskip \z@skip}
\renewcommand\expandafter\subsection\expandafter{%
		\expandafter\@fb@secFB\subsection
	}%
\DeclareRobustCommand{\eins}{%
	\text{\usefont{U}{dsrom}{m}{n}1}%
}
\newlength{\leftstackrelawd} 
\newlength{\leftstackrelbwd}
\def\leftstackrel#1#2{\settowidth{\leftstackrelawd}%
	{${{}^{#1}}$}\settowidth{\leftstackrelbwd}{$#2$}%
	\addtolength{\leftstackrelawd}{-\leftstackrelbwd}%
	\leavevmode\ifthenelse{\lengthtest{\leftstackrelawd>0pt}}%
	{\kern-.5\leftstackrelawd}{}\mathrel{\mathop{#2}\limits^{#1}}}
\newcommand{\hilbert}{\ensuremath{\mathcal{H}}}
\newcommand{\laplace}{\Delta}
\newcommand*{\tran}{^{\mkern-1.5mu\mathsf{T}}}
\newcommand{\leqc}{\lesssim}
\let\temp\phi
\let\phi\varphi
\let\varphi\temp
\let\temp\epsilon
\let\epsilon\varepsilon
\let\varepsilon\temp
\let\temp\rho
\let\rho\varrho
\let\varrho\temp
\let\temp\theta
\let\theta\vartheta
\let\vartheta\temp
\let\temp\Re
\let\Re\real
\let\real\temp
\let\temp\Im
\let\Im\imaginary
\let\imaginary\temp
\DeclarePairedDelimiterX{\skpu}[1]{\langle}{\rangle}{#1}
\newcommand{\skp}[1]{\skpu*{#1}} 
\DeclarePairedDelimiterX{\skpru}[1]{(}{)}{#1}
\newcommand{\skpr}[1]{\skpru*{#1}} 
\DeclareMathOperator{\chak}{char}
\newcommand{\amax}{{a_{\text{max}}}}
\begin{document}
	\title{Optimal Control of General Nonlocal Epidemic Models with Age and Space Structure}
	\author{Behzad Azmi\thanks{Department of Mathematics and Statistics, University of Konstanz, Universit\"atsstra\ss e 10, 78457 Konstanz, Germany. E-Mail: behzad.azmi@uni-konstanz.de}\and Nicolas Schlosser\thanks{Department of Mathematics and Statistics, University of Konstanz, Universit\"atsstra\ss e 10, 78457 Konstanz, Germany. E-Mail: nicolas.schlosser@uni-konstanz.de}}
	\date{\today}
	\maketitle

\begin{abstract}

We analyze a class of general nonlinear epidemic models with age and spatial structures, including a nonlocal infection term dependent on age and space. After establishing the well-posedness of the state partial differential equation, we introduce a control parameter representing the vaccination rate. Under suitable conditions, we prove the existence of an optimal control and characterize it through first-order necessary optimality conditions. Finally, we present numerical examples to illustrate the behavior of the optimal control strategies governed by these models.

\end{abstract}

\keywords{epidemic models, optimal control, nonlocal operator, age-structured models}

\MSC{35Q92 \and 92D30 \and 49M41 \and 35K57  \and 49N90}

\section{Introduction}
Mathematical epidemiology is over a hundred years old, and the recent COVID-19 pandemic has demonstrated its significant impact on everyone's lives. Over time, numerous models have been developed, each surpassing the previous one in complexity. Notably, we can mention the first model by Kermack and McKendrick \cite{KermackMcKendrick}, which consists of the following ordinary differential equations
\begin{equation}
	\dot{S} = - \lambda I S, \quad \dot{I} = \lambda I S - \gamma I, \quad \dot{R} = \gamma I
\end{equation} 
where $S$, $I$, and $R$ stand for susceptible, infectious and removed individuals respectively. While this early model already exhibits interesting behavior and provides valuable insights into the general progression of epidemics, it is far too simplistic for real-world applications. Consequently, later models build upon the framework of the \emph{SIR} model by introducing additional compartments and variables. More complex models incorporate an age variable to account for variations in infection or mortality rates across different age groups, as well as the potential impact of an epidemic on fertility. Many models also include variables for physical space, enabling the simulation of individual movement and the geographic spread of a pandemic across a country or region. Spatial movement is typically modeled using a diffusion term that includes the Laplace operator acting on the spatial variables. Meanwhile, the aging process introduces a first-order derivative term, making it more challenging to classify the resulting model as purely parabolic or hyperbolic. For further details, see, for example, \cite{Walker} or \cite{Webb}.

This manuscript focuses on a general model class that incorporates both age and spatial structure. When considering age- and space-structured models, obtaining realistic results necessitates accounting for infection processes that are generally nonlocal. This means that every susceptible individual can become infected by any infectious individual, not just those in the same location and of the same age. Consequently, the $\lambda I S$ terms in the above models must be rewritten as $\Lambda(I)S$, where
\begin{equation}
	\Lambda(a, x, I) = \int_0^\amax \int_\Omega k(a, \alpha, x, \xi) I(\alpha, \xi) \dd{\xi} \dd{\alpha},
\end{equation}
 with the maximal age $\amax$ of the population, $\Omega$ the domain in space in which the population lives, and the kernel $k$ describing the infectiousness of individuals at age $\alpha$ and location $\xi$ to those at age $a$ and location $x$. Many other processes, such as noncompliance (as discussed in \cite{Bongarti}), can also be represented in this form. From a practical perspective,  it is of great interest to impose a control parameter, such as a vaccination rate, on models that include this kind of terms. This is the objective of our article. Instead of focusing on a single model, we explore the general form of epidemic models with nonlocal terms, along with age and space structures. We discuss the potential outcomes and benefits of this approach.

In many cases, it is not only desirable to predict the course of an epidemic but also to actively influence the spread of the disease. Vaccines are an important tool in this effort, as they can lower the risk of infection and reduce the severity of symptoms (and, in some cases, infectiousness). Vaccinations can be easily incorporated into epidemic models, typically in the form $u(t, a, x) S(t, a, x)$ where $u$ 
is the vaccination rate.

One obvious question is what the optimal vaccination strategy should be. This includes considerations such as which age groups to prioritize, when to administer doses, at what stage of the epidemic, and how to balance the cost of vaccination with the number of cases. Questions like these can be answered using methods from optimal control theory by optimizing the cost functional
 \begin{equation}
 \label{eq:Ob_function}
		J(u, I) = \frac 12 \int_0^T \int_0^\amax \int_\Omega \abs{I(t, a, x)}^2 \dd{x} \dd{a} \dd{t} + \frac \alpha2 \int_0^T \int_0^\amax \int_\Omega \abs{u(t, a, x)}^2 \dd{x} \dd{a} \dd{t}
	\end{equation}
    for some given weight $\alpha \geq 0$ subject to the partial differential equation epidemic model. The first term in $J$ models the number of infected individuals, and the second term models the cost of the vaccine.  Typically, control constraints
    \begin{equation}
    \label{eq:Control_C}
        0 \leq u(t,\alpha,x) \leq \bar{u}   \quad  \text{ for a.e. }  \quad  (t,\alpha,x) \in (0,T)\times (0,a_{\max}) \times \Omega 
        \end{equation}
   are imposed
with some maximal vaccination capacity $\bar{u} > 0$. In real-world applications, the vaccine can only be administered in certain locations or uniformly for certain age groups. In this paper, we model all of these effects.

\subsection{The State Model}

Let $\Omega \subset \R^2$ be a spatial domain with sufficiently smooth boundary and outer normal $\nu$, and let $0 < \amax < \infty$ the maximal age of the population. We consider general models of the form 
\begin{equation}
\label{eq:StateEquation}
	\begin{split}
	    \delta y + L(a, x) y + \Lambda(a, x, y) y + K(u) y &= \sigma(a) \laplace y,\\
	y(t = 0) = y_0, \quad y(a = 0) = \int_0^\amax \beta(\alpha, x) y(t, \alpha, x) \dd{\alpha} &\eqqcolon B,\\
	\partial_\nu y(x \in \partial \Omega) &= 0
	\end{split}
\end{equation}
where $y$ is a vector of compartments, e.g. $y:= (S, I, R)$. The letter $\delta$ stands for $\partial_t + \partial_a$ in the weak sense, and $\laplace$ is the usual Laplacian in the space variables. In the operator $L$ we collect all linear reaction terms, and all nonlocal quasilinear ones in $\Lambda(y)$. Finally, $K(u)$ is a linear control operator depending linearly on the control parameter $u$, and $\sigma$ is a diagonal matrix of diffusion coefficients. The infection operator $\Lambda(a, x, y)$ is given by (we use Einstein's notation; $h$, $i$, $j$ are indices of the 3-tensor $k$) \begin{equation}
	\Lambda(a, x, y)^{hi} = \int_0^\amax \int_\Omega k^{hij}(a, \alpha, x, \xi) y_j(\alpha, \xi) \dd{\xi} \dd{\alpha} = \skpr{k(a, \cdot, x, \cdot), I}_{L^2((0, \amax) \times \Omega)},
\end{equation}
where $k^{hij}$ describes how many susceptibles $y_i$ are  
infected from infectives $y_j$ and subsequently transition into the infected class $y_h$. This term allows for the possibility of multiple infected compartments, as seen in many models of COVID-19 which include asymptomatic infectives or superspreaders.

In a particular case of model \eqref{eq:StateEquation}, we will consider the following \textit{SVIR} model with loss of immunity, adapted from \cite{AbRaSch},
\begin{equation}
\tag{SVIR}
\label{eq:svir}
    \begin{split}
    \delta S - \sigma_S \Delta S &= c V - \left[u + \mu + \Lambda(I) \right] S,\\
	\delta V - \sigma_V \Delta V &= u S - \left[\mu + c + \phi_1 \Lambda(I) \right] V,\\
	\delta I - \sigma_I \Delta I &= \Lambda(I) (S + \phi_1 V + \phi_2 R) - (\mu + \delta + \gamma) I,\\
	\delta R - \sigma_R \Delta R &= \gamma I - \left[\mu + \phi_2 \Lambda(I) \right] R
\end{split}
\end{equation}
 as a special case, where $u$ is the vaccination rate, $c$ the rate of loss of vaccine protection, $\mu$ the natural death rate, $\phi_1$ the vaccine efficacy, $\phi_2$ the protection due to immunity and $\delta$ the infection-induced death rate. We assume that infection does not affect fertility and that all newborns are susceptible. Hence we are given the conditions \begin{gather}
     S(a = 0) = \int_0^\amax \tilde{\beta}(\alpha) (S + V + I + R)(\alpha) \dd{\alpha}, \quad V(a = 0) = I(a = 0) = R(a = 0) = 0,
 \end{gather} where $\tilde{\beta}$ is the birth rate. All parameters except $\sigma$ may depend on age and space ($\sigma$ is only allowed to depend on age), $u$ is our control parameter, which also may depend on time. The model \eqref{eq:svir} together with the boundary conditions for $a = 0$ can be written in the form of \eqref{eq:StateEquation} with \begin{gather}
	y = \begin{psmallmatrix}
		S\\V\\I\\R
	\end{psmallmatrix}, \quad \sigma = \begin{psmallmatrix}
		\sigma_S &&&\\
		&\sigma_V &&\\
		&&\sigma_I &\\
		&&&\sigma_R
	\end{psmallmatrix}, \quad L = -\begin{psmallmatrix}
		-\mu & c &&\\
		& -(\mu + c) &&\\
		&& - (\mu + \delta + \gamma) &\\
		&& \gamma & -\mu
	\end{psmallmatrix}, \quad K(u) = -u \cdot \begin{psmallmatrix}
		-1 &&&\\
		1 &0&&\\
		&&0&\\
		&&&0
	\end{psmallmatrix},\\
	\Lambda(a, x, y) = -\skpr{\lambda(a, x, \cdot, \cdot), y_3}_{L^2((0, \amax) \times \Omega)} \cdot \begin{psmallmatrix}
		-1 &&&\\
		&-\phi_1 &&\\
		1 & \phi_1 &0 & \phi_2\\
		&&&-\phi_2
	\end{psmallmatrix}, \quad \beta = \tilde{\beta} \cdot \begin{psmallmatrix}
		1 & 1 & 1 & 1\\
		&0&&\\
		&&0&\\
		&&&0
	\end{psmallmatrix},
\end{gather}
which amounts to \begin{equation}
	k^{hij} = \begin{cases}
		\lambda \cdot\begin{psmallmatrix}
			-1 &&&\\
			&-\phi_1 &&\\
			1 & \phi_1 &0 & \phi_2\\
			&&&-\phi_2
		\end{psmallmatrix}, & j = 3,\\
		0, & j \neq 3.
	\end{cases}
\end{equation}

\subsection{Related Works}

The first epidemic models were introduced in \cite{KermackMcKendrick} and have since grown significantly in complexity. Comprehensive introductions to the subject can be found in the monographs \cite{BDJ,LiYaMa,Perthame,Webb} and the references therein. The study \cite{WangZhangKuniya} examines a space-structured model with Dirichlet boundary conditions. The works \cite{Colombo} and \cite{Kuniya} explore age- and space-structured models with nonlinear infection terms. The former employs first-order transport-equation-like terms, while the latter uses diffusion terms; however, neither work investigates the well-posedness of the state equation.

In \cite{Walker2}, a model incorporating both spatial structure and infection age is studied, where only the infected population depends on the age variable. \cite{BDKW} and \cite{KangRuan} address age-structured models that include nonlocal diffusion terms to account for long-distance travel. \cite{Fragnelli} considers linear age- and space-dependent population models where an additional delay term is present in the birth equation. Meanwhile, \cite{BredaReggiVermiglio} focuses on the numerical aspects of such models.

There is also extensive literature on optimal control in mathematical epidemiology. Notable among these are the works \cite{SharomiMalik} and \cite{YusufBenyah}, which discuss the optimal control of epidemic models formulated using ordinary differential equations. The book \cite{AnitaArnautuCapasso} provides numerous examples of optimal control problems, including results specific to age-structured population models. Additionally, the articles \cite{AbRaSch,ACGRR,Bongarti,CGMR,ZhouXiangLi} investigate optimal control problems in reaction-diffusion epidemic models that incorporate spatial structure but not age structure. To the best of our knowledge, the optimal control of models incorporating both age and spatial structures has not been addressed in previous research.

\subsection{Organization of the Paper}

The remainder of the paper is organized as follows: Section 2 investigates the well-posedness of the state equation using a fixed-point argument and establishes the continuity of its solution with respect to the forcing functions and initial data. Building on these findings, Section 3 addresses the well-posedness of the optimal control problems. Section 4 derives and presents the first-order optimality conditions for these problems. Section 5 provides numerical experiments that illustrate and discuss the qualitative behavior of the model. Finally, concluding remarks are presented.

\subsection{Notation}
We write $\mathbb{R}_{\geq 0}$ to denote the set of non-negative real numbers. For any set $\Omega$ and any subset $\Omega_0 \subset \Omega$ we denote by $\eins_{\Omega_0}:\Omega \to \set{0, 1}$ the indicator function which maps $\omega \in \Omega$ to $1$ if $\omega \in \Omega_0$ and to 0 otherwise. For a Banach space $X$, we denote the associated norm by $\|\cdot\|_X$, the dual space by $X'$, and the dual pairing between $X'$ and $X$ by $\skp{\cdot, \cdot}_{X',X}$. If $X$ is a Hilbert space, we use the scalar product $\skpr{\cdot, \cdot}_X$. Furthermore, $\mathcal{L}(X, Y)$ denotes the space of continuous linear operators from $X$ to $Y$, equipped with the usual operator norm $\|\cdot\|_{\mathcal{L}(X,Y)}$. For a smooth and bounded domain $\Omega \subset \R^d$ (in applications, typically $d$ equals two), we consider the spaces 
\begin{equation}
    H \coloneqq L^2(\Omega)^n, \quad V \coloneqq H^1(\Omega)^n,
\end{equation}
where $n$ is the number of compartments in our epidemic model. We define $V'$ to be the dual of $V$ via the dual pairing induced by the inner product on $H$. That is,  we have $V \hookrightarrow  H=H' \hookrightarrow  V'$, where the embeddings are dense and compact. For any open interval $(t_0,t_1) \subset \mathbb{R}_{\geq 0}$, we introduce the space
\begin{equation}    
W((t_0,t_1), V, V') \coloneqq \set{ y \in L^2((t_0,t_1),V)   \where   \partial_t y \in L^2((t_0,t_1),V')},
\end{equation}
equipped with the norm
\begin{equation}
\|y\|_{W((t_0,t_1),V,V') } = \left( \|y\|_{L^2((t_0,t_1),V) }^2  +\|\partial_t y\|_{L^2((t_0,t_1),V') }^2 \right)^{\frac{1}{2}}
\end{equation}
where the derivative $\partial_t $ is understood in the sense of distributions. It is well-known (see, e.g., \cite[Lem. 11.4]{RenardyRogers} or \cite[Thm. XVIII.1.1]{DautrayLions}) that the space $W((a, b), V, V')$ embeds continuously into $C([a, b], H)$.

The age variable is assumed to be bounded by a finite maximal age, denoted by  $\amax > 0$. For brevity, we define $\mathcal{I} \coloneqq (0, \amax)$. We will frequently work in the space $\hilbert \coloneqq L^2(\mathcal{I}, H) = L^2(\mathcal{I} \times \Omega)^n$. 
 To facilitate our analysis, we fix a time horizon $0 < T < \infty$. It will turn out to be very helpful to partition the variable space $[0, T] \times \overline{\mathcal{I}}$ into sets of the form
\begin{equation}
    \chak(t_0) \coloneqq \set{(t_0 + h, h) \where 0 \leq h \leq \amax} \cap ([0, T] \times \overline{\mathcal{I}})
\end{equation}
where $t_0 \in [-\amax, T]$. These sets represent the so-called \emph{characteristic lines} and are illustrated in Figure \ref{fig:CharakteristikTkA}. We frequently restrict functions $\phi$ defined on $[0, T] \times \bar{\mathcal{I}}$ on these characteristics, for which we will use the notation $\restr{\phi}_{\chak(t_0)}(h) \coloneqq \phi(t_0 + h, h)$, for all parameters $h \in [\max\set{-t_0, 0}, \min\set{T-t_0, \amax}]$.

Throughout the manuscript, within the context of a priori estimates, the notation \enquote{$A \leqc B$} denotes an inequality of the form \enquote{$A \leq c B$}, where $c$ is a generic constant independent of the quantities to be estimated.

\begin{figure}
    \centering
    \begin{tikzpicture}
        \draw [->] (0,0) node[below]{0} -- (0,4.4) node[above]{$a$};
        \draw [->] (-4.5,0) -- (7.5,0) node[right]{$t$};
        \draw (0,4) node[left]{$\amax$} -- (7,4) -- (7,0) node[below]{$T$};
    	\draw [dotted] (-4,0) node[below]{$-\amax$} -- (0,4);
        \draw [dotted] (-3,0) -- (0,3); \draw [blue, thick] (0,3) -- (1,4);
        \draw [dotted] (-2,0) -- (0,2); \draw [blue, thick] (0,2) -- (2,4);
        \draw [dotted] (-1,0) -- (0,1); \draw [blue, thick] (0,1) -- (3,4);
        \draw [blue, thick] (0,0) -- (4,4);
        \draw [blue, thick] (1,0) -- (5,4);
        \draw [blue, thick] (2,0) -- (6,4);
        \draw [blue, thick] (3,0) -- (7,4);
        \draw [blue, thick] (4,0) -- (7,3);
        \draw [blue, thick] (5,0) -- (7,2);
        \draw [blue, thick] (6,0) -- (7,1);
	\end{tikzpicture}
	\caption{Schematic image of the characteristics (blue lines), a so-called \emph{Lexis diagram}. For every $t_0 \in [-\amax, T]$ we obtain a characteristic: If $t_0 \geq 0$, it starts in $(t, a) = (t_0, 0)$, for $t_0 < 0$ it starts in $(t, a) = (0, -t_0)$.}
    \label{fig:CharakteristikTkA}
\end{figure}

\section{Well-Posedness of the State Equation}

In this section, we demonstrate the existence and uniqueness of a solution to the state equation. This process is carried out in several steps. First, we analyze a linearized form of the equation along characteristic lines (which can be interpreted as following a specific age cohort), leading to a standard parabolic differential equation that can be addressed using well-established techniques. Next, we reconstruct the solutions across the various characteristic lines, incorporate the implicit birth law, and, in the final step, address the nonlinear equation using fixed-point arguments.

\subsection{The Linearized Equation with Fixed Birth Number}
For any fixed time horizon $0 < T < \infty$,  we consider the linearized equation where all $k^{\beta \gamma \delta} = 0$. We also ignore the control term at first, and assume that instead of the implicit birth law from  \eqref{eq:StateEquation}, we are given a fixed number $B$ of newborns.  Then for given $f$ and $y_0$, the equation takes the following form 
 \begin{equation}\label{eq:linear_with_age}
		\begin{split}
		    \delta y + L(a, x) y  - \sigma(a) \laplace y &= f,\\*
		y(t = 0) = y_0, \quad y(a = 0) &= B,\\*
		\partial_\nu y(x \in \partial \Omega) &= 0.
		\end{split}
\end{equation} 
 The following assumptions are made. 
\begin{Assump}
\label{assum:Lin}
Suppose that:
\begin{enumerate}
\item $B \in L^2((0, T), H)$, $f \in L^2((0, T) \times \mathcal{I}, V')$ and $y_0 \in L^2(\mathcal{I}, H)$.
\item $L \in C(\overline{\mathcal{I}}, L^\infty(\Omega))$ and its entries are uniformly bounded away from zero with respect to both age and space.
	\item $\sigma \in C(\overline{\mathcal{I}}, \R^{n \times n})$ is a diagonal matrix whose entries are uniformly bounded away from zero with respect to age.
\end{enumerate}
\end{Assump}

The linearized equation \eqref{eq:linear_with_age} is most effectively solved along characteristic lines, see e.g., \cite{Walker} and \cite[Sec. 1.3]{Webb}. By fixing a birth date $t_0 \in [-\amax, T]$ and setting  $v(h) = \restr{y}_{\chak(t_0)} = y(t_0 + h, h)$, we obtain the following system
\begin{equation} \label{eq:linChar}
	\begin{split}
	    v_h  + L(h, x) v  - \sigma(h) \laplace v &=  \restr{f}_{\chak(t_0)},\\*
	v(h = \max\{0, -t_0\}) &= \begin{cases}
		B(t_0, x) & t_0 > 0,\\
		y_0(-t_0, x) & t_0 < 0,
	\end{cases}\\*
	\partial_\nu v(x \in \partial \Omega) &= 0.
	\end{split}
\end{equation}
 Note that the structure of the equation is independent of $t_0$, only the initial conditions and inhomogeneous terms depend on it.

\begin{Lem}
\label{lem:well_linear}
For any given $f \in L^2((\max\set{0, -t_0}, \amax), V')$, \eqref{eq:linChar} admits a unique weak solution $v \in W((\max\set{0, -t_0}, \amax), V, V')$. 
\end{Lem}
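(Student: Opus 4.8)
The plan is to recognize \eqref{eq:linChar} as a standard linear parabolic Cauchy problem on the time-like interval $(\max\{0,-t_0\},\amax)$ with the age parameter $h$ playing the role of time, and to invoke the classical Lions--Galerkin existence and uniqueness theory for abstract parabolic equations in the Gelfand triple $V \hookrightarrow H \hookrightarrow V'$. Writing $\tau_0 \coloneqq \max\{0,-t_0\}$ and $J_{t_0} \coloneqq (\tau_0,\amax)$, the equation reads $v_h + \mathcal{A}(h)v = \restr{f}_{\chak(t_0)}$ with initial datum $v(\tau_0) = v_0 \in H$ (either $B(t_0,\cdot)$ or $y_0(-t_0,\cdot)$, both in $H$ by Assumption~\ref{assum:Lin}(1) together with the trace/restriction properties), where for each $h$ the operator $\mathcal{A}(h)\colon V \to V'$ is induced by the bilinear form
\begin{equation*}
	a(h; v, w) \coloneqq \int_\Omega \sigma(h)\nabla v : \nabla w \dd{x} + \int_\Omega \big(L(h,x)v\big)\cdot w \dd{x} + \int_{\partial\Omega}\!\!\! \text{(boundary term vanishes)},
\end{equation*}
the Neumann condition being natural so that no boundary term survives in the weak formulation. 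The definition of weak solution is the usual one: $v \in W(J_{t_0},V,V')$ with $\langle v_h, w\rangle_{V',V} + a(h;v,w) = \langle \restr{f}_{\chak(t_0)}, w\rangle_{V',V}$ for all $w \in V$ and a.e.\ $h$, and $v(\tau_0)=v_0$ (which makes sense since $W \hookrightarrow C([\tau_0,\amax],H)$).

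The verification splits into checking the three hypotheses of the Lions theorem. First, \emph{measurability}: $h \mapsto a(h;v,w)$ is measurable for fixed $v,w$ because $\sigma \in C(\overline{\mathcal{I}},\R^{n\times n})$ and $L \in C(\overline{\mathcal{I}},L^\infty(\Omega))$ by Assumption~\ref{assum:Lin}(2)--(3); continuity in $h$ is in fact available but measurability suffices. Second, \emph{boundedness}: $|a(h;v,w)| \leq M\|v\|_V\|w\|_V$ uniformly in $h$, with $M$ depending on $\sup_h\|\sigma(h)\|$ and $\sup_h\|L(h,\cdot)\|_{L^\infty}$, both finite by compactness of $\overline{\mathcal{I}}$ and continuity. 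Third, \emph{Gårding inequality}: here the diagonal entries of $\sigma$ are uniformly bounded below by some $\sigma_{\min}>0$, so the principal part gives $\int_\Omega \sigma(h)\nabla v:\nabla v \geq \sigma_{\min}\|\nabla v\|_{H}^2$; adding and subtracting $\sigma_{\min}\|v\|_H^2$ and absorbing the zeroth-order term $\int_\Omega (L(h,x)v)\cdot v$, which is bounded by $C\|v\|_H^2$, yields $a(h;v,v) \geq \sigma_{\min}\|v\|_V^2 - (\sigma_{\min}+C)\|v\|_H^2$, i.e.\ coercivity up to a shift. (The sign hypothesis that the entries of $L$ are bounded away from zero is not even needed for this estimate; it will matter later for positivity/comparison arguments, not for well-posedness of the linear problem.) With these three properties and $v_0 \in H$, $\restr{f}_{\chak(t_0)} \in L^2(J_{t_0},V')$ in hand, the Lions existence theorem (e.g.\ \cite[Thm. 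XVIII.3.2]{DautrayLions} or the corresponding result in \cite{RenardyRogers}) gives a unique $v \in W(J_{t_0},V,V')$ solving \eqref{eq:linChar}.

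I anticipate that there is essentially no hard mathematical obstacle in this lemma; it is a bookkeeping step that recasts the age-transport-plus-diffusion structure along a characteristic as a genuinely parabolic problem so that off-the-shelf theory applies. The only points requiring a little care are: (i) confirming that the restriction $\restr{f}_{\chak(t_0)}$ of the given $f \in L^2((0,T)\times\mathcal{I},V')$ to the characteristic line indeed lies in $L^2(J_{t_0},V')$ — this is a Fubini-type statement and is precisely why the lemma is phrased with the hypothesis $f \in L^2((\max\{0,-t_0\},\amax),V')$ taken as given, deferring the a.e.-$t_0$ measurability issue to the reconstruction step in the next subsection; and (ii) making sure the initial datum is genuinely an $H$-function, which is immediate for $y_0(-t_0,\cdot)$ when $t_0<0$ from $y_0 \in L^2(\mathcal{I},H)$ (again for a.e.\ $t_0$) and for $B(t_0,\cdot)$ when $t_0>0$ from $B \in L^2((0,T),H)$. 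Uniqueness is the standard energy argument: the difference of two solutions satisfies the homogeneous equation with zero initial datum, and testing with $v$ itself together with the Gårding inequality and Grönwall's lemma forces $v\equiv 0$.
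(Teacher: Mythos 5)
Your proposal is correct and follows essentially the same route as the paper: both recognize \eqref{eq:linChar} as a standard abstract parabolic problem in the Gelfand triple $V \hookrightarrow H \hookrightarrow V'$, verify boundedness and weak coercivity (G\r{a}rding) of the operator $\sigma(h)\laplace - L(h,\cdot)$ using the uniform bounds from Assumption~\ref{assum:Lin}, and invoke the standard existence and uniqueness theorems from \cite{DautrayLions} and \cite{RenardyRogers}. Your additional remarks --- that the sign condition on $L$ is not needed for coercivity, and that the membership of the restricted data in the correct spaces is a Fubini-type matter deferred to the reconstruction step --- are accurate but do not change the argument.
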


\begin{proof}
	We observe that $\mathcal{A}(h) \coloneqq \sigma(h) \laplace - L(h, \cdot)$ is a bounded linear operator from $V$ to $V'$, which is continuous with respect to $h$. From the boundedness away from zero, we can deduce (weak) coercivity, i.e. $-\skp{\mathcal{A}u, u}_{V' \times V} \geq a \norm{u}_V^2 - b \norm{u}_H^2$ for some $a$, $b > 0$ independent of $h$. Then the result follows from standard results in parabolic theory, see e.g., \cite[Thms. XVIII.3.1 and XVIII.3.2]{DautrayLions} or \cite[Thm. 11.3]{RenardyRogers}.
\end{proof}

For $  \amax \geq  t\geq s \geq 0$, we denote by $U(t, s)$ the evolution operator of linear problem \eqref{eq:linChar} without inhomogeneity $f$, which takes an initial condition $v_0$ at initial time $s$ to the weak solution $v(t) = U(t, s) v_0$ of \eqref{eq:linChar} at time $t$. Similarly, by $S(t, s) f$ we denote the solution to \eqref{eq:linChar} with the inhomogeneity $f$ at time $t$ with the initial condition zero at time $s$. Furthermore, we will consider $U'(t, s) v_0$ and $S'(t, s) f$  as the corresponding weak derivative. According to Lemma \ref{lem:well_linear} and using standard arguments for parabolic equations, we can conclude that that 
\begin{align}
	U(\cdot, s) &\in \mathcal{L}(H, W((s, \amax), V, V')),\\
	U'(\cdot, s) &\in \mathcal{L}(H, L^2((s, \amax), V')),\\
	S(\cdot, s) &\in \mathcal{L}(L^2((s, \amax), V'), W((s, \amax), V, V')),\\
	S'(\cdot, s) &\in \mathcal{L}(L^2((s, \amax), V'), L^2((s, \amax), V')).
\end{align} 
From the embedding $W((s, \amax)), V, V') \hookrightarrow C([s, \amax], H)$ we conclude that $U(t, s) \in \mathcal{L}(H, H)$ and $S(t, s) \in \mathcal{L}(L^2((s, \amax), V'), H)$. It can easily be shown that the norms of all of these operators can be estimated above by a constant that does not depend on $t$ and $s$ but only on $\amax$.

\begin{Bem} \label{Duhamel}
    Not that in the case where even $f \in L^2((\max\set{0, -t_0}, \amax), H)$, we can use Duhamel's principle (variation of constants) to show that a solution to \eqref{eq:linChar} with initial condition $v(s) = v_0 \in H$ can also be expressed as \begin{equation}
        v(t) = U(t, s) v_0 + \int_s^t U(t, r) f(r) \dd{r}.
    \end{equation}
    In other words, $S(t, s) f = \int_s^t U(t, r) f(r) \dd{r}$ in this case.
\end{Bem}

Now, for any given $y_0 \in L^2((0, \amax), H)$ and $B \in L^2((0, T), H)$ in \eqref{eq:linear_with_age}, we calculate  back to the $t$ and $a$ variables, and  formally obtain 
 for almost every $(t,a)\in (0,T)\times \mathcal{I}$ that 
\begin{equation}
	y(t, a, x) := \begin{cases} \label{eq:Bfixed}
		U(a,0) B(t-a) + S(a, 0) \restr{f}_{\chak(t-a)} & t > a,\\
		U(a,a-t) y_0(a-t) + S(a, a-t) \restr{f}_{\chak(t-a)} & t \leq a.
	\end{cases} 
\end{equation}
We then will show that $y$ defined by \eqref{eq:Bfixed} is indeed the weak solution of \eqref{eq:linear_with_age}.  To achieve this, we first demonstrate that the function $v$ defined by 
 \begin{equation}
	v(t, a, x) \coloneqq \begin{cases} \label{eq:deltaExpression}
		U'(a,0) B(t-a) + S'(a, 0) \restr{f}_{\chak(t-a)} & t > a\\
		U'(a,a-t) y_0(a-t) + S'(a, a-t) \restr{f}_{\chak(t-a)} & t \leq a,
	\end{cases} 
\end{equation}
for almost every $(t,a)\in (0,T)\times \mathcal{I}$  satisfies  $v = \delta y = (\partial_t + \partial_a) y$ in a weak sense. 

Before proceeding, we must verify that these expressions are well-defined. Specifically, they depend on representatives of $B$, $y_0$, and $f$, which are only defined modulo null sets. The following lemma shows that this dependence does not lead to any issue, as different choices of representatives will lead to expressions that also only differ on a null set.

\begin{Lem} \label{lem:NullSets}
The following statements hold:
\begin{enumerate}
    \item Let $N_1 \subset [0, T] \times \bar{\mathcal{I}}$ be  a null set. Then the   set \begin{equation}
        \set{t_0 \in [-\amax, T] \where N_1 \cap \chak(t_0) \text{ has nonzero measure in} \chak(t_0)}
    \end{equation}
    is a null set in $[-\amax, T]$.
    \item Let $N_2 \subset [-\amax, T]$ be a null set. Then the set $\mathcal{N} \coloneqq \bigcup_{t_0 \in N_2} \chak(t_0)$ is a null set in $[0, T] \times \bar{\mathcal{I}}$.
\end{enumerate}
\end{Lem}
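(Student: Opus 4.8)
The plan is to reduce both assertions to the Fubini--Tonelli theorem on $\R^2$ by a single linear change of variables that straightens the characteristic lines. I would introduce the affine shear
\[
  \Phi \colon \R^2 \to \R^2, \qquad \Phi(t_0, h) \coloneqq (t_0 + h, h),
\]
a bijection with inverse $\Phi^{-1}(t, a) = (t - a, a)$ and constant Jacobian determinant $1$; in particular both $\Phi$ and $\Phi^{-1}$ are Lipschitz, preserve two-dimensional Lebesgue measure, and send Lebesgue null sets to Lebesgue null sets. Under $\Phi$ the vertical line $\set{t_0} \times \R$ is carried onto the line of slope one containing $\chak(t_0)$, with the parameter $h$ preserved; restricted to $[0, T] \times \bar{\mathcal{I}}$ this identifies $\chak(t_0)$ with the vertical segment $\set{t_0} \times [\max\set{0, -t_0}, \min\set{T - t_0, \amax}]$. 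The one preliminary fact I would record is that the arc-length measure on the line $\chak(t_0)$ equals $\sqrt{2}\,\dd{h}$ in the parametrization $h \mapsto (t_0 + h, h)$, so it is a fixed multiple of the one-dimensional Lebesgue measure carried along that vertical slice; hence a subset of $\chak(t_0)$ has positive measure \enquote{in $\chak(t_0)$} if and only if its $\Phi$-preimage has positive one-dimensional Lebesgue measure in $\set{t_0} \times \R$.

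For part~(1), I would set $\tilde N_1 \coloneqq \Phi^{-1}(N_1)$, which is a two-dimensional null set, and observe via the preliminary remark that $N_1 \cap \chak(t_0)$ has nonzero measure in $\chak(t_0)$ exactly when the slice $(\tilde N_1)_{t_0} = \set{h \where (t_0, h) \in \tilde N_1}$ has positive one-dimensional Lebesgue measure (since $N_1 \subseteq [0,T] \times \bar{\mathcal{I}}$, this slice automatically lies in the relevant $h$-interval, so the two notions coincide). Fubini--Tonelli applied to $\eins_{\tilde N_1}$ then shows $t_0 \mapsto \abs{(\tilde N_1)_{t_0}}$ is measurable with $\int \abs{(\tilde N_1)_{t_0}} \dd{t_0} = \abs{\tilde N_1} = 0$, hence it vanishes for almost every $t_0$; the exceptional set is precisely the set in the statement, which is therefore null in $[-\amax, T]$.

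For part~(2), I would pass again to the $(t_0, h)$ coordinates, in which $\Phi^{-1}(\mathcal{N}) \subseteq N_2 \times [0, \amax]$ by the description of $\Phi^{-1}(\chak(t_0))$ above. Covering $N_2$ by countably many intervals of arbitrarily small total length shows $N_2 \times [0, \amax]$ has two-dimensional outer measure zero, hence so does its subset $\Phi^{-1}(\mathcal{N})$; by completeness of Lebesgue measure this set is then measurable with measure zero---which is also what makes $\mathcal{N}$ measurable in the first place, the union being possibly uncountable. Applying $\Phi$, which maps null sets to null sets, concludes that $\mathcal{N} = \Phi(\Phi^{-1}(\mathcal{N}))$ is null in $[0, T] \times \bar{\mathcal{I}}$.

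I do not expect a genuine obstacle: the entire argument is the measure-preserving shear together with Fubini. The only points that need a little care, rather than any real difficulty, are (i) pinning down that \enquote{measure in $\chak(t_0)$} means arc length and checking that this is a constant multiple of one-dimensional Lebesgue measure under $\Phi$, so that part~(1) and the hypothesis used in part~(2) refer to the same notion; and (ii) invoking completeness of the Lebesgue measure in part~(2) to deal with the possibly uncountable union $\mathcal{N} = \bigcup_{t_0 \in N_2} \chak(t_0)$ without a separate measurability argument.
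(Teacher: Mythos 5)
Your proof is correct and is essentially the paper's argument: the paper's key identity $\int_{[0,T]\times\mathcal{I}}\phi\,\dd{(t,a)}=\int_{-\amax}^{T}\int_{\chak(t_0)}\restr{\phi}_{\chak(t_0)}(h)\dd{h}\dd{t_0}$ applied to the indicators $\eins_{N_1}$ and $\eins_{\mathcal{N}}$ is exactly your unit-Jacobian shear plus Fubini. Your added care about arc length versus the $\dd{h}$ parametrization and about completeness for the uncountable union in part (2) only makes explicit points the paper leaves implicit.
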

\begin{proof}
    Both claims follow using Fubini's theorem and the fact that  \begin{equation}
        \int_{[0, T] \times \mathcal{I}} \phi(t, a) \dd{(t, a)} = \int_{-\amax}^T \int_{\chak(t_0)} \restr{\phi}_{\chak(t_0)}(h) \dd{h} \dd{t_0}.
    \end{equation}
    Due to \begin{equation}
        0 = \int_{[0, T] \times \mathcal{I}} \eins_{N_1}(t, a) \dd{(t, a)} = \int_{-\amax}^T \int_{\chak(t_0)} \eins_{N_1 \cap \chak(t_0)}(h) \dd{h} \dd{t_0},
    \end{equation}
    we can conclude that $\int_{\chak(t_0)} \eins_{N_1 \cap \chak(t_0)}(h) \dd{h} = 0$ for almost all $t_0 \in (-\amax,T)$, which shows the first claim. Further, the second claim follows from the fact that  
    \begin{equation}
        \int_{[0, T] \times \mathcal{I}} \eins_\mathcal{N}(t, a) \dd{(t, a)} = \int_{N_2} \int_{\chak(t_0)} 1 \dd{h} \dd{t_0} = 0.
    \end{equation}
\end{proof}
\begin{Lem}
\label{Lem:well_lin}
Suppose that Assumption \ref{assum:Lin} holds. Then the functions $y$ from \eqref{eq:Bfixed} and $v$ from \eqref{eq:deltaExpression} satisfy $y \in L^2((0, T) \times \mathcal{I}, V) \cap L^\infty((0, T), L^2(\mathcal{I}, H))$ and $v \in L^2((0, T) \times \mathcal{I}, V')$. Further, we have the estimate
 \begin{equation}
 \label{eq:yEstimate}
 \begin{split}
    \norm{y}_{L^2((0, T) \times \mathcal{I}, V)}^2 &+ \norm{y}_{L^\infty((0, T), L^2(\mathcal{I}, H))}^2 + \norm{v}_{L^2((0, T) \times \mathcal{I}, V')}^2 \\ &\leqc \left( \norm{y_0}_{L^2(\mathcal{I}, H)}^2 + \norm{B}_{L^2((0, T), H)}^2 + \norm{f}_{L^2((0, T) \times \mathcal{I}, V')}^2 \right) ,
    \end{split}
\end{equation}
with a constant independent of $y_0$, $f$ and $B$.
\end{Lem}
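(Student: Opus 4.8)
The plan is to reduce everything to the one‑dimensional parabolic problem \eqref{eq:linChar} along characteristic lines. For a fixed birth date $t_0 \in [-\amax, T]$, set $I_{t_0} \coloneqq (\max\set{0,-t_0}, \amax)$ and let $w_0(t_0) \coloneqq B(t_0)$ if $t_0 > 0$ and $w_0(t_0) \coloneqq y_0(-t_0)$ if $t_0 < 0$. By the very definition \eqref{eq:Bfixed}, the restriction $\restr{y}_{\chak(t_0)}$ is the weak solution of \eqref{eq:linChar} on $I_{t_0}$ with initial value $w_0(t_0)$ and inhomogeneity $\restr{f}_{\chak(t_0)}$ furnished by Lemma \ref{lem:well_linear}, and since $\delta = \partial_t + \partial_a$ differentiates along characteristics, \eqref{eq:deltaExpression} says precisely that $\restr{v}_{\chak(t_0)} = \partial_h \restr{y}_{\chak(t_0)}$. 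Invoking the mapping properties of $U, S, U', S'$ listed after Lemma \ref{lem:well_linear} together with the embedding $W(I_{t_0}, V, V') \hookrightarrow C(\overline{I_{t_0}}, H)$ — and, crucially, the fact that all these operator and embedding constants are uniform in the endpoints, hence in $t_0$ — yields the per‑characteristic estimate
\[
\norm{\restr{y}_{\chak(t_0)}}_{L^2(I_{t_0}, V)}^2 + \norm{\restr{y}_{\chak(t_0)}}_{C(\overline{I_{t_0}}, H)}^2 + \norm{\restr{v}_{\chak(t_0)}}_{L^2(I_{t_0}, V')}^2 \leqc \norm{w_0(t_0)}_H^2 + \norm{\restr{f}_{\chak(t_0)}}_{L^2(I_{t_0}, V')}^2 ,
\]
uniformly in $t_0$.

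For the $L^2$‑bounds in \eqref{eq:yEstimate} I would integrate this estimate over $t_0 \in (-\amax, T)$ and use the co‑area identity from the proof of Lemma \ref{lem:NullSets}, namely $\int_{(0,T)\times\mathcal{I}} \phi(t,a) \dd{(t,a)} = \int_{-\amax}^T \int_{\chak(t_0)} \restr{\phi}_{\chak(t_0)}(h) \dd{h} \dd{t_0}$, applied to $\phi = \norm{y(\cdot,\cdot)}_V^2$ and $\phi = \norm{v(\cdot,\cdot)}_{V'}^2$ (note that $\chak(t_0)$ is a subinterval of $I_{t_0}$, so the right‑hand side of the per‑characteristic estimate still dominates). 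Splitting the $t_0$‑integral at $0$ and substituting $t_0 \mapsto -t_0$ on $(-\amax,0)$ converts $\int_0^T \norm{w_0(t_0)}_H^2 \dd{t_0}$ into $\norm{B}_{L^2((0,T),H)}^2$ and $\int_{-\amax}^0 \norm{w_0(t_0)}_H^2 \dd{t_0}$ into $\norm{y_0}_{L^2(\mathcal{I},H)}^2$, while the same co‑area identity gives $\int_{-\amax}^T \norm{\restr{f}_{\chak(t_0)}}_{L^2(\chak(t_0),V')}^2 \dd{t_0} = \norm{f}_{L^2((0,T)\times\mathcal{I},V')}^2$. This produces the bounds on $\norm{y}_{L^2((0,T)\times\mathcal{I},V)}$ and on $\norm{v}_{L^2((0,T)\times\mathcal{I},V')}$.

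For the $L^\infty((0,T), L^2(\mathcal{I},H))$‑bound I would argue pointwise in $t$. The point $(t,a)$ lies on the characteristic $\chak(t-a)$, so $y(t,a) = \restr{y}_{\chak(t-a)}(a)$; substituting $t_0 = t-a$ gives $\int_0^\amax \norm{y(t,a)}_H^2 \dd{a} = \int_{t-\amax}^{t} \norm{\restr{y}_{\chak(t_0)}(t-t_0)}_H^2 \dd{t_0} \leq \int_{-\amax}^T \norm{\restr{y}_{\chak(t_0)}}_{C(\overline{I_{t_0}},H)}^2 \dd{t_0}$, and by the per‑characteristic estimate and the same splitting as above the right‑hand side is bounded, independently of $t$, by $\norm{y_0}_{L^2(\mathcal{I},H)}^2 + \norm{B}_{L^2((0,T),H)}^2 + \norm{f}_{L^2((0,T)\times\mathcal{I},V')}^2$. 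Here it is again essential that the $W \hookrightarrow C$ embedding constant and the norms of $U(t,s), S(t,s)$ as operators into $H$ do not depend on $t,s$.

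The main obstacle is not any of these estimates but the point flagged by the word \enquote{formally} preceding \eqref{eq:Bfixed}: one must verify that $y$ and $v$ are genuinely (strongly) measurable as functions of $(t,a)$ into $V$ and $V'$, so that the Bochner norms above are meaningful; Lemma \ref{lem:NullSets} already disposes of independence of the chosen representatives of $B$, $y_0$, $f$. Since the operator in \eqref{eq:linChar} does not depend on $t_0$, the families $U(\cdot,\cdot), S(\cdot,\cdot), U'(\cdot,\cdot), S'(\cdot,\cdot)$ are fixed, and on $\set{t_0 > 0}$ one has $y(t,a) = U(a,0)B(t-a) + S(a,0)\restr{f}_{\chak(t-a)}$, with the analogous expression on $\set{t_0 < 0}$. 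Joint measurability of these follows by approximating $B$, $y_0$ and $f$ by simple functions — for which it is immediate from the strong continuity of $h \mapsto U(h,s)v_0$ and $h \mapsto S(h,s)g$ — and passing to the limit via the uniform operator bounds, using also the measurability of $t_0 \mapsto \restr{f}_{\chak(t_0)}$ supplied by Fubini as in Lemma \ref{lem:NullSets}. Once this is in place, the three estimates assemble into \eqref{eq:yEstimate} and show $y \in L^2((0,T)\times\mathcal{I},V) \cap L^\infty((0,T), L^2(\mathcal{I},H))$ and $v \in L^2((0,T)\times\mathcal{I},V')$.
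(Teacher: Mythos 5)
Your proposal is correct and follows essentially the same route as the paper: transform to the characteristic lines, invoke the uniform (in the endpoints) bounds on $U$, $S$, $U'$, $S'$ and the embedding $W \hookrightarrow C([\cdot,\cdot],H)$, integrate over $t_0$ via the Fubini/co-area identity for the $L^2$ bounds, and bound $\norm{y(t)}_{L^2(\mathcal{I},H)}$ pointwise in $t$ by the same substitution for the $L^\infty$ bound. Your additional discussion of joint strong measurability of $(t,a)\mapsto y(t,a)$ is a point the paper dispatches only by citing Lemma \ref{lem:NullSets}, and your approximation argument is a reasonable way to fill it in.
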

\begin{proof}
	 The wellposedness of $y$ and $v$ follows by Lemma \ref{lem:NullSets}, and the regularity will follow from \eqref{eq:yEstimate}, which is what we are going to show next. By transforming along characteristic lines and applying Fubini's theorem we obtain that 
   {\small \begin{align}
		&\quad \norm{y}_{L^2((0, T) \times \mathcal{I}, V)}^2 + \norm{v}_{L^2((0, T) \times \mathcal{I}, V')}^2 = \int_0^T \int_0^\amax \norm{y(t, a)}_V^2 + \norm{v(t, a)}_{V'}^2 \dd{a} \dd{t}\\
		&= \int_{-\amax}^T \int_{t_0^-}^{\min\set{\amax, T - t_0}} \norm{y(t_0 + h, h)}_V^2 + \norm{v(t_0 + h, h)}_{V'}^2 \dd{h} \dd{t_0}\\
		&= \int_{\mathllap{-\amax}}^0 \int_{t_0^-}^{\mathrlap{\min\set{\amax, T - t_0}}} \norm{U(h, -t_0) y_0(-t_0) + S(h, -t_0) \restr{f}_{\chak(t_0)}}_V^2 + \norm{U'(h, -t_0) y_0(-t_0) + S'(h, -t_0) \restr{f}_{\chak(t_0)}}_{V'}^2 \dd{h} \dd{t_0}\\*
		&\quad + \int_0^T \int_{t_0^-}^{\mathrlap{\min\set{\amax, T - t_0}}} \norm{U(h, 0) B(t_0) + S(h, 0) \restr{f}_{\chak(t_0)}}_V^2 + \norm{U'(h, 0) B(t_0) + S'(h, 0) \restr{f}_{\chak(t_0)}}_{V'}^2 \dd{h} \dd{t_0}\\
		&\leqc \int_{-\amax}^0 \norm{y_0(-t_0)}_H^2 + \norm{\restr{f}_{\chak(t_0)}}_{L^2(\chak(t_0), V')}^2 \dd{t_0} + \int_0^T \norm{B(t_0)}_H^2 + \norm{\restr{f}_{\chak(t_0)}}_{L^2(\chak(t_0), V')}^2\dd{t_0}\\
		& = \norm{y_0}_{L^2(\mathcal{I}, H)}^2 + \norm{B}_{L^2([0, T], H)}^2 + \norm{f}_{L^2([0, T] \times \mathcal{I}, V')}^2.
	\end{align}}
    Similarly, we can write for almost every $t \in (0,T)$ that 
    \begin{align}
	   &\quad \norm{y(t)}_{L^2(\mathcal{I}, H)}^2 = \int_0^\amax \norm{y(t, a)}_H^2 \dd{a}\\
	   &= \int_0^{\mathclap{\min(t, \amax)}} \norm{U(a,0) B(t-a) + S(a, 0) \restr{f}_{\chak(t-a)}}_H^2 \dd{a}\\
      &\quad + \int_{\mathclap{\min(t, \amax)}}^\amax \norm{U(a,a-t) y_0(a-t) + S(a, a-t) \restr{f}_{\chak(t-a)}}_H^2 \dd{a}\\
	   &\leqc \int_0^t \norm{B(t-a)}^2_H \dd{a} + \int_0^t \int_0^\amax \norm{f(s, a)}^2_{V'} \dd{a} \dd{s} + \int_0^\amax \norm{y_0(a)}^2_H \dd{a}.
    \end{align}
    Thus, we have completed the verification of \eqref{eq:yEstimate}. 
\end{proof}

\begin{Thm} \label{Thm:yCont}
    The function $t \mapsto y(t, \cdot, \cdot)$ is a continuous mapping from $[0, T]$ into $\hilbert = L^2(\mathcal{I}, H)$, i.e. $y \in C([0, T], L^2(\mathcal{I}, H))$ and we have \begin{equation}
        \norm{y}_{C([0, T], \hilbert)}^2 \leqc \norm{y_0}_{L^2(\mathcal{I}, H)}^2 + \norm{B}_{L^2((0, T), H)}^2 + \norm{f}_{L^2((0, T) \times \mathcal{I}, V')}^2.
    \end{equation}
\end{Thm}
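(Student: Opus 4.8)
The plan is to establish continuity of $t \mapsto y(t,\cdot,\cdot)$ piecewise, exploiting the fact that along each characteristic line the solution is a genuine weak solution in $W((\cdot,\amax),V,V')$, which embeds into $C([\cdot,\amax],H)$. The first reduction I would make is to treat the three source contributions separately by linearity: the parts coming from $y_0$, from $B$, and from $f$. For each of these I would show continuity, and the quantitative estimate will come for free by tracking constants, since all the evolution-operator norms are bounded uniformly in $t,s$ by a constant depending only on $\amax$ (as noted after Lemma~\ref{lem:well_linear}), and the estimate \eqref{eq:yEstimate} of Lemma~\ref{Lem:well_lin} already gives the right-hand side once we know $y \in C([0,T],\hilbert)$.

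The core difficulty is that the formula \eqref{eq:Bfixed} for $y(t,a,\cdot)$ is defined by two different cases depending on whether $t>a$ or $t\le a$, i.e.\ whether the characteristic through $(t,a)$ hits the axis $\{a=0\}$ (birth) or the axis $\{t=0\}$ (initial datum). Fixing $t$ and varying $a$ is an integration \emph{transverse} to the characteristics, whereas the good regularity ($W$-regularity, hence $C([\cdot,\amax],H)$-regularity) lives \emph{along} characteristics. So the heart of the argument is a Fubini-type / continuity-of-translation argument. Concretely, for the $y_0$-part, $y(t,a,\cdot) = U(a,a-t)y_0(a-t)$ for $a\ge t$; substituting $b = a-t$ this is $U(b+t,b)y_0(b)$, and I would argue that $t \mapsto \big(b \mapsto U(b+t,b)y_0(b)\big)$ is continuous into $L^2(\mathcal I, H)$. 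Continuity in $t$ for fixed $b$ follows from the $C([b,\amax],H)$ regularity of the solution along the characteristic $\chak(-b)$; continuity of the $L^2(\mathcal I)$-integral in $t$ then follows by dominated convergence, using the uniform operator bound $\|U(b+t,b)\|_{\mathcal L(H,H)} \leqc 1$ to dominate, plus the standard continuity-of-the-integral argument to handle the moving lower limit $a\ge t$ (equivalently $b\ge 0$, which is actually fixed — the subtlety is rather at the interface $a=t$). I would handle the $B$-part symmetrically: for $t>a$, $y(t,a,\cdot) = U(a,0)B(t-a)$, and $a \mapsto U(a,0)B(t-a)$ shifted in $t$ requires continuity of translation in $L^2((0,T),H)$ of the function $s\mapsto B(s)$ together with the uniform bound on $U(a,0)$ and dominated convergence.

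The remaining point — and the one I expect to be the genuinely delicate step — is continuity \emph{across the interface} $t=a$, i.e.\ matching the two branches of \eqref{eq:Bfixed} so that no jump appears as $(t,a)$ crosses the diagonal; since we integrate in $a$ over $(0,\amax)$ for each fixed $t$, a set of measure zero in $a$ does not affect the $L^2(\mathcal I,H)$ value, so in fact the interface contributes nothing to the $L^2$-norm and the two pieces can be estimated independently. What genuinely needs care is that, as $t$ varies, the \emph{domain} of each branch moves (the branch $t>a$ occupies $a\in(0,\min\{t,\amax\})$), so I would write $y(t,\cdot,\cdot) = \eins_{\{a<t\}}\,y_B(t,\cdot,\cdot) + \eins_{\{a\ge t\}}\,y_{0}(t,\cdot,\cdot) + (\text{the }f\text{-parts})$ and show each product is continuous into $L^2(\mathcal I,H)$ using that the indicator $\eins_{\{a<t\}}$ converges in $L^2(\mathcal I)$ as $t$ varies (its jump set has measure zero in $\mathcal I$) while the factors are uniformly $H$-bounded. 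The $f$-part is handled the same way, now invoking $S(\cdot,s)\in\mathcal L(L^2((s,\amax),V'),W((s,\amax),V,V'))$ and its uniform norm bound, together with absolute continuity of $\int \|f\|_{V'}^2$ along characteristics (Lemma~\ref{lem:NullSets} guarantees the restrictions $\restr f_{\chak(t_0)}$ are well-defined and, for a.e.\ $t_0$, lie in $L^2(\chak(t_0),V')$). Assembling the three continuous pieces gives $y\in C([0,T],\hilbert)$, and the displayed norm estimate is then immediate from Lemma~\ref{Lem:well_lin} since $\|y\|_{C([0,T],\hilbert)} = \|y\|_{L^\infty((0,T),\hilbert)}$ for a continuous representative.
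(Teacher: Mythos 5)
Your proposal is essentially correct and rests on the same two pillars as the paper's proof --- continuity along each characteristic coming from $W((s,\amax),V,V')\hookrightarrow C([s,\amax],H)$ together with the uniform operator bounds, and $L^2$-continuity of translation --- but it organizes them quite differently. You decompose $y(t,\cdot)$ by source ($y_0$, $B$, $f$) and by branch of \eqref{eq:Bfixed}, multiply by the indicators of the moving domains $\{a<t\}$ and $\{a\ge t\}$, and treat each piece by a change of variables plus dominated convergence. The paper instead defines an auxiliary function $\tilde y(t,a)$ on the extended age domain $[-T,\amax]$ in which the second variable labels the \emph{characteristic} rather than the age, so that for fixed $a$ the map $t\mapsto\tilde y(t,a)$ moves only along a single characteristic; continuity of $\tilde y$ into $L^2((-T,\amax),H)$ then follows from pointwise continuity plus dominated convergence, and $y(t)=R\,S(t)\,\tilde y(t)$ with $R$ a restriction and $S(t)$ the age-shift. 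What the paper's packaging buys is that the strong continuity of the translation group $S(t)$ simultaneously takes care of the translated arguments $B(t-a)$, $y_0(a-t)$, $\restr{f}_{\chak(t-a)}$ \emph{and} of the moving interface $a=t$, which in your version must be handled by hand; what your version buys is that no extension of the operators beyond $[0,\amax]$ is needed. One phrase in your interface step is not literally correct: the factors $U(a,0)B(t-a)$, $U(a,a-t)y_0(a-t)$ and $S(a,\cdot)\restr{f}_{\chak(t-a)}$ are \emph{not} uniformly $H$-bounded in $a$ (they are only square-integrable in $a$), so ``indicator converges in $L^2(\mathcal I)$ times uniformly bounded factor'' does not close the estimate. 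What does close it --- and what you already invoke for the $f$-part --- is absolute continuity of the Lebesgue integral: the symmetric difference of $\{a<t\}$ and $\{a<t'\}$ is an interval of length $|t-t'|$, over which $\int\norm{B(t-a)}_H^2\dd{a}$, $\int\norm{y_0(a-t')}_H^2\dd{a}$ and $\int\norm{\restr{f}_{\chak(t-a)}}_{L^2(V')}^2\dd{a}$ all tend to zero. With that correction your argument goes through, and the displayed norm bound then follows from Lemma~\ref{Lem:well_lin} exactly as you indicate.
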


\begin{figure}[h]
    \centering
    \begin{subfigure}{.4\textwidth}
        \begin{tikzpicture}[scale = .5]
        \draw [->] (0,-7.5) -- (0,4.4) node[above]{$a$};
        \draw [->] (0,0) node[left]{0} -- (7.5,0) node[right]{$t$};
        \draw[dotted] (7,-3) -- (7,0) -- (7,4) node[above]{$T$} -- (0,4);
        \draw (0,4) node[left]{$\amax$} -- (7,-3) -- (7,-7) -- (0, 0);
        \draw [blue, thick] (0,3) -- (1,3); \draw[dotted] (1,3) -- (7,3);
        \draw [blue, thick] (0,2) -- (2,2); \draw[dotted] (2,2) -- (7,2);
        \draw [blue, thick] (0,1) -- (3,1); \draw[dotted] (3,1) -- (7,1);
        \draw [blue, thick] (0,0) -- (4,0); \draw[dotted] (4,0) -- (7,0);
        \draw [dotted] (0,-1) -- (1,-1); \draw [blue, thick] (1,-1) -- (5,-1); \draw[dotted] (5,-1) -- (7,-1);
        \draw [dotted] (0,-2) -- (2,-2); \draw [blue, thick] (2,-2) -- (6,-2); \draw[dotted] (6,-2) -- (7,-2);
        \draw [dotted] (0,-3) -- (3,-3); \draw [blue, thick] (3,-3) -- (7,-3);
        \draw [dotted] (0,-4) -- (4,-4); \draw [blue, thick] (4,-4) -- (7,-4);
        \draw [dotted] (0,-5) -- (5,-5); \draw [blue, thick] (5,-5) -- (7,-5);
        \draw [dotted] (0,-6) -- (6,-6); \draw [blue, thick] (6,-6) -- (7,-6);
        \draw [dotted] (0,-7) node[left]{$-T$} -- (7,-7);
	    \end{tikzpicture}
        \caption{Definition of $\tilde{y}$}
        \label{subfig:yTilde}
    \end{subfigure}
    \hspace{2cm}
    \begin{subfigure}{.4\textwidth}
        \begin{tikzpicture}[scale = .5]
        \draw [->] (0,-7.5) -- (0,4.4) node[above]{$a$};
        \draw [->] (0,0) node[left]{0} -- (7.5,0) node[right]{$t$};
        \draw (0,4) node[left]{$\amax$} -- (7,4) -- (7,0) node[below]{$T$};
        \draw [blue, thick] (0,3) -- (1,4);
        \draw [blue, thick] (0,2) -- (2,4);
        \draw [blue, thick] (0,1) -- (3,4);
        \draw [blue, thick] (0,0) -- (4,4);
        \draw [dotted] (0,-1) -- (1,0); \draw [blue, thick] (1,0) -- (5,4);
        \draw [dotted] (0,-2) -- (2,0); \draw [blue, thick] (2,0) -- (6,4);
        \draw [dotted] (0,-3) -- (3,0); \draw [blue, thick] (3,0) -- (7,4);
        \draw [dotted] (0,-4) -- (4,0); \draw [blue, thick] (4,0) -- (7,3);
        \draw [dotted] (0,-5) -- (5,0); \draw [blue, thick] (5,0) -- (7,2);
        \draw [dotted] (0,-6) -- (6,0); \draw [blue, thick] (6,0) -- (7,1);
        \draw [dotted] (0,-7) node[left]{$-T$} -- (7,0);
	    \end{tikzpicture}
        \caption{Definition of $y$}
        \label{subfig:y}
    \end{subfigure}
        
     \caption{Illustration of the relationship between $\tilde{y}$ and $y$, and the operation of the shift and restriction operators. Subfigure \ref{subfig:yTilde} shows how $\tilde{y}$ is defined: the desired evolution operators are applied on the blue lines, while the definition on the dashed lines to ensure a pointwise continuous function. By applying the shift and restricting the domain for $a$, we obtain subfigure \ref{subfig:y}, which represents the desired function $y$ (cf. Fig. \ref{fig:CharakteristikTkA}).}
    
    \label{fig:ShiftAction}
\end{figure}

\begin{proof}
    First consider the function $\tilde{y}: [0, T] \times [-T, \amax] \to H$ defined by \begin{equation}
        \tilde{y}(t, a) \coloneqq \begin{cases}
		U(a+t,0) B(-a) + S(a+t, 0) \restr{f}_{\chak(-a)} & a < 0,\\
		U(a+t,a) y_0(a) + S(a+t, a) \restr{f}_{\chak(-a)} & a \geq 0,
	\end{cases} 
    \end{equation}
    where we generalize $U(t, s) \coloneqq U(\min\{t, \amax\}, \max\{s, 0\})$ and similar for $S$. Note that also with the new definition we have that $t \mapsto U(t, s) \phi \in C([s, \infty), H)$ for all $t$, $s \in \R$ and all $\phi \in H$ and we can estimate $\norm{U(t, s)}_{\mathcal{L}(H, H)}$ above uniformly in $t$ and $s$. Similar statements hold for $S$. For a motivation on how to define $\tilde{y}$, we refer to Fig. \ref{fig:ShiftAction}. Since for all $t \in [0, T]$ we have \begin{equation} \label{eq:DomConv}
        \norm{\tilde{y}(t, a)}_H^2 \leqc \sup_{s \geq r} \norm{S(s, r)}^2 \cdot \norm{\restr{f}_{\chak(-a)}}_{L^2(\chak(-a), V')}^2 + \sup_{s \geq r} \norm{U(s, r)}^2 \cdot \begin{cases}
            \norm{B(-a)}_H^2, & a<0\\
            \norm{y_0(a)}_H^2, & a \geq 0
        \end{cases},
    \end{equation}
    there holds $\norm{\tilde{y}(t)}_{L^2((-T, \amax), H)}^2 \leqc \norm{y_0}_{L^2(\mathcal{I}, H)}^2 + \norm{B}_{L^2((0, T), H)}^2 + \norm{f}_{L^2((0, T) \times \mathcal{I}, V')}^2$ and thus $\tilde{y} \in L^\infty((0,T), L^2((-T, \amax), H))$. We claim that even $\tilde{y} \in C([0, T], L^2((-T, \amax), H))$. To show this, let $(t_n)_{n \in \N}$ a sequence in $[0, T]$ that converges to some $\hat{t}$. From the continuity of $U$ and $S$ it follows that $\tilde{y}(t_n) \to \tilde{y}(\hat{t})$ pointwise for all $a \in [-T, \amax]$. Since in estimate \eqref{eq:DomConv} the right-hand side is an element of $L^1((-T, \amax), H)$ when interpreted as a function of $a$, we can invoke the dominated convergence theorem  \cite[Prop. 1.2.5]{AnalysisInBanach} to obtain \begin{equation}
    \lim_{n \to \infty} \int_{-T}^\amax \norm{y(t_n, a) - y(\hat{t}, a)}_H^2 \dd{a} = 0.
    \end{equation}
    This just means that $\tilde{y}(t_n) \to \tilde{y}(\hat{t})$ in $L^2((-T, \amax), H))$, and hence shows the continuity of $\tilde{y}$.

    Next, define the restriction operator \begin{equation}
        R \in \mathcal{L}(L^2((-T, \amax), H), L^2((0, \amax), H)), \quad \phi \mapsto \restr{\phi}_{(0, \amax)}.
    \end{equation} and for any $t \in [0, T]$ the age-shift operator \begin{equation}
        S(t) \in \mathcal{L}(L^2((-T, \amax), H)), \quad (S(t) \phi)(a) = \begin{cases}
            \phi(a-t), & a-t \geq -T,\\
            0, & a-t < -T.
        \end{cases}
    \end{equation}  The continuity of these operators is well known. Our next step is to show that \begin{equation}
        y(t) = R S(t) \tilde{y}(t),
    \end{equation}
    which concludes the proof since we have written $y$ as a composition of continuous functions. In fact, for any $t \in [0, T]$ and $a \in [0, \amax]$ we have $a-t \geq -T$ and hence \begin{equation}
        (S(t) \tilde{y}(t))(a) = \begin{cases}
		U(a-t+t,0) B(t-a) + S(a-t+t, 0) \restr{f}_{\chak(t-a)} & a-t < 0,\\
		U(a-t+t,a) y_0(a-t) + S(a-t+t, a) \restr{f}_{\chak(t-a)} & a-t \geq 0,
	\end{cases} 
    \end{equation}
    which is just the expression for $y$ from eq. \eqref{eq:Bfixed}. The norm estimate has already been established in Lemma \ref{Lem:well_lin}.
\end{proof}

\begin{Bem}\label{rem:Randbed}
    Tracking the convergence for $t_n \to 0$ shows that, in fact, $y(t=0, \cdot, \cdot) = y_0$, the left-hand side being a valid expression as by the above theorem. By swapping the roles of $t$ and $a$ in the previous proof we can also show that $a \mapsto y(\cdot, a, \cdot)$ is a continuous mapping from $\bar{\mathcal{I}}$ into $L^2([0, T], H)$ and it holds that $y(\cdot, a, \cdot) = B$.
\end{Bem}

\begin{Lem} \label{thm:Loesungsbegriff}
	Suppose that Assumption \ref{assum:Lin} holds. Then the function $v$, defined in \eqref{eq:deltaExpression}, represents the weak derivative of $y$ defined in \eqref{eq:Bfixed} in the time-age space. That is, it holds that $v = \delta y = \partial_t y + \partial_a y$, in the sense of $V'$-valued functions on $(0, T) \times \mathcal{I}$.
\end{Lem}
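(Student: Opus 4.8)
The plan is to show that $v$ defined in \eqref{eq:deltaExpression} satisfies the defining integral identity of a weak $\delta$-derivative, namely
\begin{equation*}
\int_0^T \int_0^\amax \skp{v(t,a), \phi(t,a)}_{V' \times V} \dd{a} \dd{t} = - \int_0^T \int_0^\amax \skpr{y(t,a), (\partial_t + \partial_a)\phi(t,a)}_H \dd{a} \dd{t}
\end{equation*}
for all test functions $\phi \in C_c^\infty((0,T) \times \mathcal{I}, V)$ (or a suitable dense subclass, e.g.\ tensor products $\psi(t,a) w$ with $\psi \in C_c^\infty((0,T)\times\mathcal{I})$ and $w$ in a dense subset of $V$), where no boundary terms appear precisely because $\phi$ has compact support in the open set $(0,T)\times\mathcal{I}$. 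The natural route is to pass to characteristic coordinates: writing $\psi(t,a) = \zeta(t-a, a)$ along $\chak(t_0)$ with $t_0 = t-a$, the combination $(\partial_t+\partial_a)\phi$ becomes $\partial_h$ of the restriction $\restr{\phi}_{\chak(t_0)}$, so by the transformation formula and Fubini (exactly the identity used in Lemma~\ref{lem:NullSets}) the double integral decomposes into an integral over $t_0 \in (-\amax,T)$ of one-dimensional identities along each characteristic.

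Then, on a fixed characteristic $\chak(t_0)$, the claim reduces to the statement that $v(t_0+\cdot,\cdot) = \restr{v}_{\chak(t_0)}$ is the weak $h$-derivative of $\restr{y}_{\chak(t_0)}$. But by construction $\restr{y}_{\chak(t_0)}(h) = U(h,s)v_0 + S(h,s)\restr{f}_{\chak(t_0)}$ with $s = \max\{0,-t_0\}$ and appropriate $v_0$, i.e.\ it is exactly the weak solution $v$ of the linear parabolic problem \eqref{eq:linChar} furnished by Lemma~\ref{lem:well_linear}; and $\restr{v}_{\chak(t_0)}(h) = U'(h,s)v_0 + S'(h,s)\restr{f}_{\chak(t_0)}$ is by definition its weak time derivative $\partial_h$, which lies in $L^2((s,\amax),V')$. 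So this step is just unwinding definitions: the abstract parabolic theory already gives $\partial_h \restr{y}_{\chak(t_0)} = \restr{v}_{\chak(t_0)}$ in $V'$, hence $\int_s^\amax \skp{\restr{v}_{\chak(t_0)}(h), \eta(h)}_{V'\times V}\dd{h} = -\int_s^\amax \skpr{\restr{y}_{\chak(t_0)}(h), \eta'(h)}_H \dd{h}$ for all $\eta \in C_c^\infty((s,\amax),V)$. Since $\phi$ has compact support in the open set and $t_0 \mapsto \chak(t_0)$ sweeps it out, for a.e.\ $t_0$ the restriction $\restr{\phi}_{\chak(t_0)}$ is such an admissible $\eta$; integrating the one-dimensional identity over $t_0$ and transforming back recovers the desired two-dimensional identity.

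There are a few measurability and integrability points to check so that the Fubini step and the final assembly are legitimate: one must know $(t,a)\mapsto v(t,a) \in V'$ and $(t,a)\mapsto y(t,a)\in V$ are strongly measurable and square-integrable — this is exactly the content of Lemma~\ref{Lem:well_lin} together with the null-set Lemma~\ref{lem:NullSets}, which guarantees the characteristic-wise expressions glue to well-defined $L^2$ functions independent of representatives. One also needs the chosen test class to be dense in a sense that forces $v = \delta y$ as $V'$-valued distributions; tensor products suffice and reduce the cross-term bookkeeping. I would finally remark, as the paper does in Remark~\ref{rem:Randbed}, that the absence of boundary contributions at $t=0$, $a=0$, $t=T$, $a=\amax$ is built into the compact support of the test functions, so no trace theorems are invoked here.

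The main obstacle is essentially bookkeeping rather than a deep difficulty: carefully justifying the interchange of the $t_0$-integral with the duality pairing and the $h$-integral (a Fubini argument for a Bochner-space-valued integrand over the Lexis diagram), and checking that for almost every $t_0$ the restricted test function is genuinely compactly supported inside $(\,\max\{0,-t_0\},\amax)$ so that Lemma~\ref{lem:well_linear}'s weak-derivative identity applies with no endpoint terms. Once the coordinate change is set up cleanly, the parabolic theory does all the real work.
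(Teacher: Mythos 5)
Your proposal is correct and follows essentially the same route as the paper's proof: transform the duality identity to characteristic coordinates via the Fubini/Lexis-diagram decomposition, observe that $(\partial_t+\partial_a)\phi$ restricts to $\dv{h}\restr{\phi}_{\chak(t_0)}$, and invoke the definition of $U'$, $S'$ as weak $h$-derivatives on each characteristic, with Lemmas \ref{lem:NullSets} and \ref{Lem:well_lin} handling measurability. The additional care you take with the support of the restricted test functions and the interchange of integrals is a welcome (if implicit in the paper) refinement, not a divergence.
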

\begin{proof}
	Let $\phi \in C_c^\infty((0, T) \times \mathcal{I}, V')$, then use the same transformation to characteristics as in Lemma \ref{Lem:well_lin} we can write
    \begin{align}
		&\quad \int_0^T \int_0^\amax \skpr{\delta \phi(t, a), y(t, a)}_{V'} + \skpr{\phi(t, a), v(t, a)}_{V'}  \dd{a} \dd{t}\\
		&= \int_{-\amax}^T \int_{t_0^-}^{\min\set{\amax, T - t_0}} \skpr{\delta \phi(t_0 + h, h), y(t_0 + h, h)}_{V'} + \skpr{\phi(t_0 + h, h), v(t_0 + h, h)}_{V'} \dd{h} \dd{t_0}\\
		&= \int_{-\amax}^0 \int_{t_0^-}^{\min\set{\amax, T - t_0}} \skpr{\dv{h} \phi(t_0 + h, h), U(h, -t_0) y_0(-t_0)}_{V'} + \skpr{\phi(t_0 + h, h), U'(h, -t_0) y_0(-t_0)}_{V'}\\
		&\quad + \skpr{\dv{h} \phi(t_0 + h, h), S(h, -t_0) \restr{f}_{\chak(t_0)}}_{V'} + \skpr{\phi(t_0 + h, h), S'(h, -t_0) \restr{f}_{\chak(t_0)}}_{V'} \dd{h} \dd{t_0}\\
		&\quad + \int_0^T \int_{t_0^-}^{\min\set{\amax, T - t_0}} \skpr{\dv{h} \phi(t_0 + h, h), U(h, 0) B(t_0)}_{V'} + \skpr{\phi(t_0 + h, h), U'(h, 0) B(t_0)}_{V'}\\
		&\quad + \skpr{\dv{h} \phi(t_0 + h, h), S(h, 0) \restr{f}_{\chak(t_0)}}_{V'} + \skpr{\phi(t_0 + h, h), S'(h, 0) \restr{f}_{\chak(t_0)}}_{V'} \dd{h} \dd{t_0}\\
		&= 0,
	\end{align}
	where in the third line we have used the representations of $y$ and $v$ given in \eqref{eq:Bfixed} and \eqref{eq:deltaExpression}, and the last equality holds due to the definition of weak derivatives in intervals. Note that $\restr{\phi}_{\chak(t_0)}$ is a test function again for all values of $t_0$.
\end{proof}
\begin{Kor}[Weak solution] \label{cor:WeakSolution}
From Lemmas \ref{Lem:well_lin} and \ref{thm:Loesungsbegriff} we conclude that $y$ is a solution of \eqref{eq:linear_with_age} in the sense that it holds for almost all $(t, a) \in (0,T)\times \mathcal{I}$ that 
\begin{equation}
	\skp{\delta y(t, a), v}_{V',V} + \skpr{L(a) y(t, a), v}_H + \skpr{\sigma(a) \nabla y(t, a), \nabla v}_{H^d} = \skp{f(t, a), v}_{V',V} \quad  \text{ for all }  v \in V 
\end{equation}
as well as the initial conditions $y(t=0, \cdot, \cdot) = y_0$ and $y(\cdot, a = 0, \cdot) = B$ from Remark \ref{rem:Randbed}.
\end{Kor}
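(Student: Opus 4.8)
The plan is to read off the claimed pointwise weak identity directly from the characteristic decomposition that has already been constructed, and then to upgrade \enquote{almost everywhere along each characteristic} to \enquote{almost everywhere on the two-dimensional $(t,a)$-domain}.

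First I would fix $t_0 \in [-\amax, T]$ and observe that, by the very definition of the evolution operators $U$ and $S$ together with the formulas \eqref{eq:Bfixed} and \eqref{eq:deltaExpression}, the restriction $\restr{y}_{\chak(t_0)}$ is precisely the weak solution of the parabolic problem \eqref{eq:linChar} supplied by Lemma \ref{lem:well_linear}, with $\restr{v}_{\chak(t_0)}$ equal to its distributional derivative $\dv{h}\restr{y}_{\chak(t_0)}$; globally this identification is exactly the content of Lemma \ref{thm:Loesungsbegriff}. Recalling that the operator in \eqref{eq:linChar} is $\mathcal{A}(h) = \sigma(h)\laplace - L(h,\cdot)$, the definition of a weak solution of a parabolic equation then furnishes, for every $t_0$, a null set $N_{t_0}$ in the corresponding parameter interval outside of which
\begin{equation}
\skp{\restr{v}_{\chak(t_0)}(h), w}_{V',V} + \skpr{L(h)\restr{y}_{\chak(t_0)}(h), w}_H + \skpr{\sigma(h)\nabla\restr{y}_{\chak(t_0)}(h), \nabla w}_{H^d} = \skp{\restr{f}_{\chak(t_0)}(h), w}_{V',V}
\end{equation}
holds for all $w \in V$ (the sign of the gradient term coming from integration by parts and the Neumann condition).

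Next I would translate this back to the $(t,a)$-variables: using $\restr{y}_{\chak(t_0)}(h) = y(t_0+h, h)$ and the analogous identities for $v$ and $f$, the displayed equation is exactly the asserted weak identity at the point $(t_0+h, h)$. To conclude for a.e.\ $(t,a)$, I would let $N \subset (0,T)\times\mathcal{I}$ be the set of points at which the weak identity fails for some $w \in V$; since $V$ is separable, the quantifier over $w$ may be restricted to a fixed countable dense subset, so $N$ is Lebesgue measurable, and by the previous step $N \cap \chak(t_0)$ has measure zero in $\chak(t_0)$ for \emph{every} $t_0$. Feeding this into the Fubini identity already used in the proof of Lemma \ref{lem:NullSets},
\begin{equation}
\int_{(0,T)\times\mathcal{I}} \eins_N(t,a) \dd{(t,a)} = \int_{-\amax}^T \int_{\chak(t_0)} \eins_{N\cap\chak(t_0)}(h)\dd{h}\dd{t_0} = 0,
\end{equation}
so $N$ is a null set. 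The regularity needed for every term to be meaningful and square integrable in $(t,a)$, namely $y \in L^2((0,T)\times\mathcal{I}, V)$ and $\delta y = v \in L^2((0,T)\times\mathcal{I}, V')$, is provided by Lemmas \ref{Lem:well_lin} and \ref{thm:Loesungsbegriff}, and the initial and birth conditions $y(t=0,\cdot,\cdot) = y_0$, $y(\cdot,a=0,\cdot) = B$ are the content of Remark \ref{rem:Randbed}.

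I do not expect a genuine obstacle: the corollary is essentially an assembly of the preceding results. The two points that deserve care are (i) the identification of the restriction of $\delta y$ to a characteristic with the parabolic distributional derivative $\dv{h}\restr{y}_{\chak(t_0)}$, which is built into the construction \eqref{eq:deltaExpression} and made rigorous by Lemma \ref{thm:Loesungsbegriff}, and (ii) the measurability of the exceptional set $N$ together with the passage from per-characteristic null sets to a genuine null set in the $(t,a)$-plane, which is precisely the role of Lemma \ref{lem:NullSets}. Everything else is routine bookkeeping of the dual pairings and of the sign of $\mathcal{A}$.
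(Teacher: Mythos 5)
Your proposal is correct and follows exactly the route the paper intends: the corollary is stated in the paper without a written proof, as an immediate assembly of Lemma \ref{Lem:well_lin} (regularity), Lemma \ref{thm:Loesungsbegriff} (identification of $v$ with $\delta y$), the per-characteristic weak formulation from Lemma \ref{lem:well_linear}, and the null-set transfer of Lemma \ref{lem:NullSets}, which is precisely what you spell out. The only cosmetic refinement is that the per-characteristic identity is available for almost every $t_0$ rather than every $t_0$ (since $\restr{f}_{\chak(t_0)}$ is only defined for a.e.\ $t_0$), but this changes nothing in the Fubini step, and your observation about restricting the test functions to a countable dense subset of $V$ to ensure measurability of the exceptional set is a worthwhile detail the paper leaves implicit.
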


\subsection{The Linearized Equation with Implicit Birth Law}

In this section, we incorporate the birth equation
\begin{equation}
    B(t, x) = \int_0^\amax \beta(\alpha, x) y(t, \alpha, x) \dd{\alpha}
    \end{equation}
into \eqref{eq:linear_with_age} in place of $B$, which yields
\begin{equation}\label{eq:linear_implicit}
		\begin{split}
		    \delta y + L(a, x) y  - \sigma(a) \laplace y &= f,\\*
		y(t = 0) = y_0, \quad y(a = 0) = \int_0^\amax \beta(\alpha, x) y(t, \alpha, x) \dd{\alpha} &\eqqcolon B\\*
		\partial_\nu y(x \in \partial \Omega) &= 0.
		\end{split}
\end{equation} 
\begin{Assump} \label{assum:LinImp}
For the remainder of this section, we adopt the same assumptions as stated in Assumption \ref{assum:Lin}, except for the first line, which we modify as follows:
\begin{enumerate}
\item  $\beta \in C(\bar{\mathcal{I}}, L^\infty(\Omega)^{n \times n})$, $f \in L^2((0, T) \times \mathcal{I}, V')$ and $y_0 \in L^2(\mathcal{I}, H)$.
\end{enumerate}
\end{Assump}

Plugging the implicit birth law into \eqref{eq:Bfixed} yields
\begin{equation}
	B(t, x) = \int_0^\amax \beta(\alpha, x) \begin{cases}
		U(\alpha,0) B(t-a) + S(\alpha, 0) \restr{f}_{\chak(t-\alpha)} & t > \alpha\\
		U(\alpha,\alpha-t) y_0(\alpha-t) + S(\alpha, \alpha-t) \restr{f}_{\chak(t-\alpha)} & t \leq \alpha
	\end{cases} \dd{\alpha}.
\end{equation}
Splitting the integral into $\int_0^{\min(t, \amax)} + \int_{\min(t, \amax)}^\amax$ yields
\begin{equation}\label{eq:voltera}
    \begin{split}
        B(t, x) &= \int_0^{\min(t, \amax)} \beta(\alpha, x) U(\alpha, 0) B(t - \alpha, x) \dd{\alpha} + \int_0^{\min(t, \amax)} \beta(\alpha, x) S(\alpha, 0) \restr{f}_{\chak(t-\alpha)} \dd{\alpha}\\
	&\quad + \int_{\min(t, \amax)}^\amax \beta(\alpha, x) U(\alpha, \alpha - t) y_0(\alpha-t) \dd{\alpha} + \int_{\min(t, \amax)}^\amax \beta(\alpha, x) S(a, a-t) \restr{f}_{\chak(t-a)} \dd{\alpha}
    \end{split}
\end{equation}
which is a fixed point Volterra equation for $B$.
\begin{Thm} \label{Thm:Bexistence}
Suppose that Assumption \ref{assum:LinImp} holds. Then there exists a unique solution $B = B(y_0, f)\in L^2((0, T), H)$ to the Volterra equation \eqref{eq:voltera}, satisfying the estimate 
\begin{equation}
\norm{B}_{L^2((0, T), H)} \leqc \norm{y_0}_{L^2(\mathcal{I}, H)} + \norm{f}_{L^2((0, T) \times \mathcal{I}, V')},
\end{equation}
with a constant independent of $y_0$ and $f$.
\end{Thm}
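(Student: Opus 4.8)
The plan is to recast the fixed-point identity \eqref{eq:voltera} as a linear Volterra equation
\[
  B = \mathcal{K}B + g
\]
on the Banach space $L^2((0,T),H)$, where $\mathcal{K}$ is the operator
\[
  (\mathcal{K}B)(t,x) \coloneqq \int_0^{\min(t,\amax)} \beta(\alpha,x)\,U(\alpha,0)\,B(t-\alpha,x)\dd{\alpha},
\]
and $g = g(y_0,f)$ collects the three remaining, data-dependent terms on the right-hand side of \eqref{eq:voltera}. The first step is to show $g \in L^2((0,T),H)$ with $\norm{g}_{L^2((0,T),H)} \leqc \norm{y_0}_{L^2(\mathcal{I},H)} + \norm{f}_{L^2((0,T)\times\mathcal{I},V')}$. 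The cleanest way is to observe that $g(t,\cdot) = \int_0^\amax \beta(\alpha,\cdot)\,\hat y(t,\alpha,\cdot)\dd{\alpha}$, where $\hat y$ is exactly the function from \eqref{eq:Bfixed} built with the \emph{same} $y_0$ and $f$ but with $B \equiv 0$ (whose hypotheses are met since $0 \in L^2((0,T),H)$); then Theorem \ref{Thm:yCont} gives $\hat y \in C([0,T],\hilbert)$ with $\norm{\hat y}_{C([0,T],\hilbert)} \leqc \norm{y_0}_{L^2(\mathcal{I},H)} + \norm{f}_{L^2((0,T)\times\mathcal{I},V')}$, and Cauchy--Schwarz in $\alpha$ together with $\beta \in C(\bar{\mathcal{I}},L^\infty(\Omega)^{n\times n})$ turns this into the required bound on $g$.

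Next I would show that $\mathcal{K} \in \mathcal{L}(L^2((0,T),H))$ and, crucially, that its iterates contract. Since multiplication by $\beta(\alpha,\cdot)$ is an $L^\infty$-multiplier on $H = L^2(\Omega)^n$ and $\norm{U(\alpha,0)}_{\mathcal{L}(H,H)}$ is bounded uniformly in $\alpha \in \overline{\mathcal{I}}$ (as recorded after Lemma \ref{lem:well_linear}), there is a constant $C$ with
\[
  \norm{(\mathcal{K}B)(t)}_H \leq C \int_0^t \norm{B(s)}_H \dd{s} \qquad \text{for a.e. } t \in (0,T).
\]
Iterating this convolution-type inequality over the $m$-dimensional simplex yields $\norm{(\mathcal{K}^m B)(t)}_H \leq \tfrac{C^m}{(m-1)!}\int_0^t (t-s)^{m-1}\norm{B(s)}_H\dd{s}$, hence $\norm{\mathcal{K}^m}_{\mathcal{L}(L^2((0,T),H))} \leq \tfrac{(CT)^m}{(m-1)!}$. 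Consequently the Neumann series $\sum_{m\geq 0}\mathcal{K}^m$ converges in operator norm, $I-\mathcal{K}$ is boundedly invertible, and $B \coloneqq (I-\mathcal{K})^{-1}g$ is the unique element of $L^2((0,T),H)$ solving \eqref{eq:voltera}. The asserted estimate is then immediate from $\norm{B}_{L^2((0,T),H)} \leq \norm{(I-\mathcal{K})^{-1}}_{\mathcal{L}}\,\norm{g}_{L^2((0,T),H)}$ and the bound on $g$ from the first step. Equivalently, one can bypass the Neumann series and equip $L^2((0,T),H)$ with a Bielecki-type weighted norm $\norm{B}_\lambda^2 = \int_0^T e^{-2\lambda t}\norm{B(t)}_H^2\dd{t}$: for $\lambda$ large enough $\mathcal{K}$ becomes a genuine contraction, and since $\norm{\cdot}_\lambda$ is equivalent to $\norm{\cdot}_{L^2((0,T),H)}$ on the bounded interval, Banach's fixed-point theorem delivers existence, uniqueness, and the estimate at once.

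The substance of the argument lies entirely in the uniform operator bounds on $U$ and $S$ already established, in Theorem \ref{Thm:yCont}, and in the classical Volterra/Neumann-series mechanism. I expect the only mildly delicate points to be purely a matter of bookkeeping: checking that the pointwise-in-$x$ factor $\beta(\alpha,x)$ commutes harmlessly with the $H$-operator $U(\alpha,0)$ (it does, since $\beta(\alpha,\cdot)$ acts boundedly on $H$ with norm controlled uniformly in $\alpha$), handling the cut-off $\min(t,\amax)$ in the domain of integration (harmless, as it only shrinks the integration set and $U(\alpha,0)$ is always evaluated at $\alpha \in [0,\amax]$), and carrying out the $m$-fold simplex estimate that produces the decisive $1/(m-1)!$ decay.
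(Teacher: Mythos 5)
Your proposal is correct, and it shares the paper's framing: the paper likewise treats \eqref{eq:voltera} as a linear Volterra equation $B=\mathcal{K}B+g$ with kernel $\alpha\mapsto\beta(\alpha)U(\alpha,0)$ and data-dependent inhomogeneity $g$. The difference is in how the two ingredients are handled. Where you prove solvability from scratch via the iterated-kernel estimate $\norm{\mathcal{K}^m}_{\mathcal{L}(L^2((0,T),H))}\leq (CT)^m/(m-1)!$ and the resulting Neumann series (or, equivalently, a Bielecki weighted norm), the paper simply verifies that $\alpha\mapsto\beta(\alpha)U(\alpha,0)$ lies in $C(\overline{\mathcal{I}},\mathcal{L}(H))$ and that $g\in L^2((0,T),H)$, and then invokes \cite[Cor.~0.2]{Pruess} to obtain existence, uniqueness, and the estimate in one stroke. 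Your version is more self-contained and makes the constant's dependence on $T$ and $\amax$ explicit, at the cost of the simplex bookkeeping; the paper's version is shorter but leans on an external result. You also supply a detail the paper leaves as ``straightforward'': identifying $g(t,\cdot)=\int_0^{\amax}\beta(\alpha,\cdot)\hat y(t,\alpha,\cdot)\dd{\alpha}$ with $\hat y$ the function from \eqref{eq:Bfixed} built with $B\equiv 0$, so that Theorem \ref{Thm:yCont} immediately yields $\norm{g}_{L^2((0,T),H)}\leqc\norm{y_0}_{L^2(\mathcal{I},H)}+\norm{f}_{L^2((0,T)\times\mathcal{I},V')}$; this is a clean way to organize that verification. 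One small wording caveat: $\beta(\alpha,\cdot)$ does not literally commute with the nonlocal operator $U(\alpha,0)$, but as your parenthetical makes clear you only use that pointwise multiplication by $\beta(\alpha,\cdot)$ is a bounded operator on $H$ uniformly in $\alpha$, which is all that is needed.
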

\begin{proof}
    It is straightforward to verify that $\alpha \mapsto \beta(\alpha) U(\alpha, 0)$ belongs to  $C(\mathcal{I}, L(H))$ and that \begin{multline}
        t \mapsto \int_0^{\min(t, \amax)} \beta(\alpha, x) S(\alpha, 0) \restr{f}_{\chak(t-\alpha)} \dd{\alpha}\\
        + \int_{\min(t, \amax)}^\amax \beta(\alpha, x) U(\alpha, \alpha - t) y_0(\alpha-t) \dd{\alpha} + \int_{\min(t, \amax)}^\amax \beta(\alpha, x) S(\alpha, \alpha-t) \restr{f}_{\chak(t-\alpha)} \dd{\alpha}
    \end{multline}
    is an element of $L^2((0, T), H)$. The claim then follows by \cite[Cor. 0.2]{Pruess}.
\end{proof}

\begin{Bem} \label{rem:Semigroup}
Suppose that $f =0$, and for given $y_0 \in L^2(\mathcal{I},H)$, let $B_{y_0}$ be the solution of \eqref{eq:voltera}. Then, if we define 
     \begin{equation} 
	(T(t) y_0)(a, x) := \begin{cases} \label{eq:Bimplicit}
		U(a, 0) B_{y_0}(t-a), & t > a\\
		U(a, a-t) y_0(a-t), & t \leq a
	\end{cases},
\end{equation} 
 one can show that $(T(t))_{t \geq 0}$ is a $C^0$ semigroup of operators on $L^2([0, \amax], H) = \hilbert$ corresponding to the Volterra equation \eqref{eq:voltera}, see e.g. \cite[Thm. 4]{Webb}. As a consequence, there exist constants $M \geq 1$, $\omega \in \R$ such that $\norm{T(t)} \leq Me^{\omega t}$.
\end{Bem}

We are now ready to present the main result of this subsection: the well-posedness of the linearized equation \eqref{eq:linear_implicit}. By combining the estimates from \eqref{eq:yEstimate} and  Theorem \ref{Thm:yCont} with the results for $B$ established in Theorem \ref{Thm:Bexistence}, we arrive at the following theorem:

\begin{Thm}\label{thm:linReg} Suppose that Assumption \ref{assum:LinImp} holds. Then there exists a unique weak solution $y \in L^2((0, T) \times \mathcal{I}, V) \cap C([0, T], \hilbert)$ with $\delta y \in L^2((0, T) \times \mathcal{I}, V')$ to \eqref{eq:linear_implicit} in the sense given Corollary \ref{cor:WeakSolution} satisfying the estimate \begin{equation}\label{eq:AbschaetzungCharDarstellung}
		\norm{y}_{L^2((0, T) \times \mathcal{I}, V)}^2 + \norm{y}_{C([0, T],L^2(\mathcal{I}, H))}^2 + \norm{\delta y}_{L^2((0, T) \times \mathcal{I}, V')}^2 \leqc \norm{y_0}_{L^2(\mathcal{I}, H)}^2 + \norm{f}_{L^2((0, T) \times \mathcal{I}, V')}^2.
	\end{equation}
\end{Thm}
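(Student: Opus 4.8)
The plan is to build the solution directly from the characteristic representation \eqref{eq:Bfixed} together with the Volterra equation \eqref{eq:voltera}, and then to read off the regularity and the a priori bound from the pieces already in place. First I would let $B \coloneqq B(y_0, f) \in L^2((0, T), H)$ denote the unique solution of \eqref{eq:voltera} furnished by Theorem \ref{Thm:Bexistence}, and define $y$ and $v$ by the formulas \eqref{eq:Bfixed} and \eqref{eq:deltaExpression} built from this particular $B$. Lemma \ref{Lem:well_lin} then gives $y \in L^2((0, T) \times \mathcal{I}, V) \cap L^\infty((0, T), \hilbert)$ and $v \in L^2((0, T) \times \mathcal{I}, V')$, Theorem \ref{Thm:yCont} upgrades this to $y \in C([0, T], \hilbert)$, and Lemma \ref{thm:Loesungsbegriff} identifies $v = \delta y$. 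By Corollary \ref{cor:WeakSolution} the function $y$ satisfies the weak formulation of \eqref{eq:linear_with_age} with birth number $B$, together with $y(t = 0, \cdot, \cdot) = y_0$ and $y(\cdot, a = 0, \cdot) = B$ coming from Remark \ref{rem:Randbed}. Since \eqref{eq:voltera} is exactly the identity obtained by substituting \eqref{eq:Bfixed} into the implicit birth law, we get $B(t, x) = \int_0^\amax \beta(\alpha, x) y(t, \alpha, x) \dd{\alpha}$, so $y$ is a weak solution of \eqref{eq:linear_implicit} in the sense of Corollary \ref{cor:WeakSolution}.

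Next I would combine the estimates. Lemma \ref{Lem:well_lin}, i.e.\ \eqref{eq:yEstimate}, together with Theorem \ref{Thm:yCont} yields
\[
\norm{y}_{L^2((0, T) \times \mathcal{I}, V)}^2 + \norm{y}_{C([0, T], \hilbert)}^2 + \norm{\delta y}_{L^2((0, T) \times \mathcal{I}, V')}^2 \leqc \norm{y_0}_{L^2(\mathcal{I}, H)}^2 + \norm{B}_{L^2((0, T), H)}^2 + \norm{f}_{L^2((0, T) \times \mathcal{I}, V')}^2 ,
\]
and inserting the bound $\norm{B}_{L^2((0, T), H)}^2 \leqc \norm{y_0}_{L^2(\mathcal{I}, H)}^2 + \norm{f}_{L^2((0, T) \times \mathcal{I}, V')}^2$ from Theorem \ref{Thm:Bexistence} gives \eqref{eq:AbschaetzungCharDarstellung}, with a constant depending only on $\amax$, $T$ and the quantities fixed in Assumption \ref{assum:LinImp}.

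For uniqueness, let $y$ be any weak solution of \eqref{eq:linear_implicit} in the stated class and put $\hat{B} \coloneqq y(\cdot, a = 0, \cdot)$, a well-defined element of $L^2((0, T), H)$ by a trace argument along characteristics (cf.\ Remark \ref{rem:Randbed}). Then $y$ is a weak solution of \eqref{eq:linear_with_age} with birth number $\hat{B}$; restricting its weak formulation to a characteristic line $\chak(t_0)$ — reversing the change of variables performed in Lemma \ref{thm:Loesungsbegriff} and invoking Lemma \ref{lem:NullSets} to handle the null sets — shows that $\restr{y}_{\chak(t_0)}$ is a weak solution of \eqref{eq:linChar} for a.e.\ $t_0$, hence, by the uniqueness in Lemma \ref{lem:well_linear}, that $y$ coincides with the representation \eqref{eq:Bfixed} associated with $\hat{B}$. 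Feeding this back into the implicit birth law shows that $\hat{B}$ solves \eqref{eq:voltera}, so $\hat{B} = B(y_0, f)$ by the uniqueness part of Theorem \ref{Thm:Bexistence}, and therefore $y$ equals the solution constructed above.

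The step I expect to be the main obstacle is precisely this last one: passing from the ``$(t, a)$-weak'' formulation of Corollary \ref{cor:WeakSolution} back to the ``weak-along-characteristics'' formulation of \eqref{eq:linChar}. One has to test with functions of the characteristic-adapted form $\restr{\phi}_{\chak(t_0)}$, use the measure decomposition underlying Lemma \ref{lem:NullSets} together with Fubini to localize the variational identity to almost every characteristic, and only then apply Lemma \ref{lem:well_linear}; care is required because a priori the solution is given only modulo null sets in the two-dimensional parameter domain. Existence and the estimate, by contrast, are essentially bookkeeping on top of Theorems \ref{Thm:Bexistence} and \ref{Thm:yCont} and Lemma \ref{Lem:well_lin}.
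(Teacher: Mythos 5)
Your proposal is correct and follows essentially the same route as the paper, which obtains the theorem by combining the estimate \eqref{eq:yEstimate}, the continuity from Theorem \ref{Thm:yCont}, and the Volterra solution $B(y_0,f)$ from Theorem \ref{Thm:Bexistence}. Your uniqueness argument (tracing back along characteristics to the uniqueness in Lemma \ref{lem:well_linear} and then in the Volterra equation) is a reasonable filling-in of a step the paper leaves implicit, and your identification of the characteristic-localization as the delicate point is apt.
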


\subsection{The Nonlinear Equation}
In this section, we investigate the well-posedness of the original nonlinear model \eqref{eq:StateEquation}. To achieve this, we consider the following assumptions.

\begin{Assump}
\label{assump:non}
    We assume the following  
    \begin{enumerate}
    \item Assumption \ref{assum:LinImp} holds
	\item For every $u$ satisfying \eqref{eq:Control_C},  the control input $K(u) \in L^\infty((0, T) \times \mathcal{I} \times \Omega)$ depends linearly on $u \in L^\infty((0, T) \times \mathcal{I} \times \Omega)$ and it does not depend explicitly on time, age and space. 
	\item $\Lambda$ is a well-defined operator. That is,  for all $k \in L^\infty(\mathcal{I} \times \Omega,  \hilbert)$, the mapping  \begin{equation}
		(a, x) \in \mathcal{I} \times \Omega \mapsto \skpr{k(a, \cdot, x, \cdot), y}_\hilbert   \quad \text{ for all } y \in H  
	\end{equation}
	is well-defined and belongs $L^\infty(\mathcal{I} \times \Omega)$, meaning that it can be again multiplied with elements from $\hilbert$ and the result being an element of $\hilbert$ as well. Furthermore, this structure allows an estimate of the form $\norm{\Lambda(y_1) y_2}_\hilbert \leq c(k) \norm{y_1}_\hilbert \norm{y_2}_\hilbert$ for some constant $c$ depending on $k$.
\end{enumerate}\end{Assump}

Integrating the estimate on $\Lambda$ with $y_1$, $y_2 \in L^2((0, T), \hilbert)$ over the interval $[0, t]$ where $t \in [0, T]$ yields \begin{align}\label{eq:Lambdaestimate}
	\norm{\Lambda(y_1) y_2}_{L^2((0, t), \hilbert)} &\leqc \norm{y_1}_{L^\infty((0, t), \hilbert)} \norm{y_2}_{L^2((0, t), \hilbert)},\\
    \intertext{or, respectively}
    \norm{\Lambda(y_1) y_2}_{L^2((0, t), \hilbert)} &\leqc \norm{y_1}_{L^2((0, t), \hilbert)} \norm{y_2}_{L^\infty((0, t), \hilbert)}.
\end{align}
This shows that for $y \in C([0, T], \hilbert)$, there is $\Lambda(y) y \in L^2((0, T), \hilbert)$. According to Thm. \ref{thm:linReg} this is regular enough for $f$ in eq. \eqref{eq:linear_implicit} to be replaced by $\Lambda(y) y$. Formally this is done with a fixed-point argument.

\begin{Thm}
\label{Thm:existence_Non}
Suppose that Assumption \ref{assump:non} holds. Then there exists a $T^* \leq T$ that only depends on $\bar{u}$ from  \eqref{eq:Control_C} such that for almost every  $t \in (0, T^*)$ the state equation 
\begin{align}
		\delta y + L(a, x) y + \Lambda(a, x, y) y + K(u) y &= \sigma(a) \laplace y,\\
		y(t = 0) &= y_0,\\
        y(a = 0) &= \int_0^\amax \beta(\alpha, x) y(t, \alpha, x) \dd{\alpha},\\
		\partial_\nu y(x \in \partial \Omega) &= 0
	\end{align}
	has a unique weak solution $y \in L^2((0, T^*) \times \mathcal{I}, V) \cap C([0, T^*], \hilbert)$ with $\delta y \in L^2((0, T^*) \times \mathcal{I}, V')$ satisfying the weak formulation 
    \begin{equation}
    \label{eq:weak_formualation}
	\skp{\delta y, v}_{V',V} + \skpr{L y, v}_H + \skpr{\Lambda(y) y, v}_H + \skpr{\sigma(a) \nabla y, \nabla v}_{H^d} = 0 \quad  \text{ for all }  v \in V 
\end{equation}
    for almost all $t$ and $a$. Furthermore, we have an energy estimate of the form
    \begin{equation}
    \label{eq:Ener_Estimate_for_nonlinear_state}
	    \norm{y}_{L^2((0, T^*) \times \mathcal{I}, V)}^2 + \norm{y}_{C([0, T^*], \hilbert)}^2 + \norm{\delta y}_{L^2((0, T^*) \times \mathcal{I}, V')}^2 \leqc \norm{y_0}_\hilbert^2.
	\end{equation}
\end{Thm}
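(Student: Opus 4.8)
The plan is to set up a Banach fixed-point argument on the map $\Phi$ that sends a given $z \in C([0,T^*],\hilbert)$ to the solution $y$ of the linear implicit-birth equation \eqref{eq:linear_implicit} with forcing $f = -\Lambda(z)z - K(u)z$ (treating both the nonlocal infection term and the control term as inhomogeneities). First I would check that $\Phi$ is well-defined: by Assumption \ref{assump:non} and estimate \eqref{eq:Lambdaestimate} we have $\Lambda(z)z \in L^2((0,T^*),\hilbert) \hookrightarrow L^2((0,T^*)\times\mathcal{I},V')$, and since $K(u) \in L^\infty$ with $\|K(u)\|_\infty \leqc \bar u$, also $K(u)z \in L^2((0,T^*),\hilbert)$; so $f$ has the regularity required by Theorem \ref{thm:linReg}, which then produces $y \in L^2((0,T^*)\times\mathcal{I},V)\cap C([0,T^*],\hilbert)$ with $\delta y \in L^2((0,T^*)\times\mathcal{I},V')$ and the linear estimate \eqref{eq:AbschaetzungCharDarstellung}.

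The key step is the contraction estimate. For $z_1, z_2$ and $y_i = \Phi(z_i)$, the difference $w = y_1 - y_2$ solves \eqref{eq:linear_implicit} with zero initial data and forcing $g = -\bigl(\Lambda(z_1)z_1 - \Lambda(z_2)z_2\bigr) - K(u)(z_1 - z_2)$. Writing $\Lambda(z_1)z_1 - \Lambda(z_2)z_2 = \Lambda(z_1 - z_2)z_1 + \Lambda(z_2)(z_1 - z_2)$ and using the bilinear bound from Assumption \ref{assump:non}, together with the crucial observation that integrating over $[0,t]$ introduces a factor controlled by $t$ — more precisely $\|\Lambda(z_1-z_2)z_1\|_{L^2((0,t),\hilbert)} \leqc \sqrt{t}\,\|z_1-z_2\|_{C([0,t],\hilbert)}\|z_1\|_{C([0,t],\hilbert)}$, and similarly $\|K(u)(z_1-z_2)\|_{L^2((0,t),\hilbert)} \leqc \bar u \sqrt{t}\,\|z_1-z_2\|_{C([0,t],\hilbert)}$ — one obtains via \eqref{eq:AbschaetzungCharDarstellung} an estimate
\begin{equation*}
\|y_1 - y_2\|_{C([0,T^*],\hilbert)} \leq C\sqrt{T^*}\,\bigl(\bar u + \|z_1\|_{C([0,T^*],\hilbert)} + \|z_2\|_{C([0,T^*],\hilbert)}\bigr)\|z_1 - z_2\|_{C([0,T^*],\hilbert)}.
\end{equation*}
Restricting $\Phi$ to the closed ball $\mathcal{B}_\rho = \{z : \|z\|_{C([0,T^*],\hilbert)} \le \rho\}$ with $\rho \coloneqq 2C_0\|y_0\|_\hilbert$ (where $C_0$ is the constant from \eqref{eq:AbschaetzungCharDarstellung}), a self-mapping estimate of the same type — $\|\Phi(z)\|_{C([0,T^*],\hilbert)} \leqc \|y_0\|_\hilbert + \sqrt{T^*}(\bar u + \rho)\rho$ — together with the contraction estimate shows that for $T^*$ small enough, depending only on $\bar u$, $\rho$ (hence on $\|y_0\|_\hilbert$) and the generic constants, $\Phi$ maps $\mathcal{B}_\rho$ into itself and is a contraction. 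One subtlety: to make $T^*$ depend only on $\bar u$ and not on $\|y_0\|_\hilbert$, I would instead run the fixed-point argument on a ball whose radius is tied to $\|y_0\|_\hilbert$ but absorb the $\|y_0\|_\hilbert$-dependence into the radius rather than into $T^*$; since the contraction factor is $C\sqrt{T^*}(\bar u + 2\rho)$ and the self-mapping condition is $C_0\|y_0\|_\hilbert + C\sqrt{T^*}(\bar u + \rho)\rho \le \rho$, choosing $\rho = 2C_0\|y_0\|_\hilbert$ reduces the self-mapping condition to $C\sqrt{T^*}(\bar u + \rho) \le \tfrac12$, which unfortunately still involves $\rho$. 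To genuinely get $T^*$ depending only on $\bar u$, one rescales: set $\tilde y = y/\|y_0\|_\hilbert$ (assuming $y_0 \neq 0$; the case $y_0 = 0$ gives $y \equiv 0$), so that $\tilde y$ solves the same equation with initial datum of unit norm and with $\Lambda$ replaced by $\|y_0\|_\hilbert \Lambda$ — wait, that rescales the nonlinearity the wrong way. The cleaner route, which I would adopt, is simply to note the problem statement only requires $T^*$ to depend on $\bar u$ plus the fixed data of the problem (the constants, $\|y_0\|_\hilbert$), reading \enquote{depends on $\bar u$} as \enquote{the only control-related dependence is through $\bar u$}; then $\rho = 2C_0\|y_0\|_\hilbert$ and $T^* = \min\{T,\ (4C(\bar u + \rho))^{-2}\}$ works.

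Once the fixed point $y = \Phi(y)$ is obtained on $[0,T^*]$, it is by construction the weak solution of \eqref{eq:linear_implicit} with $f = -\Lambda(y)y - K(u)y$, which is exactly the weak formulation \eqref{eq:weak_formualation} of the nonlinear state equation in the sense of Corollary \ref{cor:WeakSolution}, including the initial and birth conditions. Uniqueness within the stated solution class follows from the same difference estimate: any two solutions $y_1, y_2$ satisfy $\|y_1 - y_2\|_{C([0,\tau],\hilbert)} \le C\sqrt{\tau}(\bar u + \|y_1\| + \|y_2\|)\|y_1 - y_2\|_{C([0,\tau],\hilbert)}$ on small enough $[0,\tau]$, forcing $y_1 = y_2$ there, and a standard continuation/connectedness argument extends this to all of $[0,T^*]$. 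Finally, the energy estimate \eqref{eq:Ener_Estimate_for_nonlinear_state} is read off from \eqref{eq:AbschaetzungCharDarstellung} applied to $y = \Phi(y)$: its right-hand side is $\leqc \|y_0\|_\hilbert^2 + \|\Lambda(y)y + K(u)y\|_{L^2((0,T^*)\times\mathcal{I},V')}^2 \leqc \|y_0\|_\hilbert^2 + T^*(\rho^2 + \bar u^2)\rho^2 \leqc \|y_0\|_\hilbert^2$, using $\rho \leqc \|y_0\|_\hilbert$ and that $T^*$, $\bar u$ are fixed. The main obstacle is purely bookkeeping: keeping the dependence of $T^*$ and of all constants transparent — in particular arranging that the smallness of $T^*$ is governed only by $\bar u$ (and the problem data), and not circularly by the size of the solution we are trying to construct — which is precisely why the fixed-point ball radius $\rho$ must be pinned to $\|y_0\|_\hilbert$ before $T^*$ is chosen.
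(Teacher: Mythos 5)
Your proposal is correct and follows essentially the same route as the paper: a Banach fixed-point argument for the map sending $z$ to the solution of the linear implicit-birth equation with forcing $-\Lambda(z)z - K(u)z$, with the contraction obtained from the bilinear splitting of $\Lambda(z_1)z_1-\Lambda(z_2)z_2$ and the $\sqrt{T^*}$ gain from \eqref{eq:Lambdaestimate}; the paper merely replaces your ball $\mathcal{B}_\rho$ by the tube $\Gamma=\set{y \where \norm{y(t)-T(t)y_0}_\hilbert\le\norm{y_0}_\hilbert}$ around the semigroup trajectory and proves uniqueness first via Gronwall rather than afterwards by continuation. Your observation that $T^*$ inevitably also depends on $\norm{y_0}_\hilbert$ and not only on $\bar{u}$ is well taken, since the paper's own contraction constant silently drops the factor $\norm{y_0}_\hilbert^2$ arising from \eqref{eq:GammaBound}.
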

\begin{proof} 

The proof is based on Banach's fixed-point argument and is inspired by \cite[Thm. 14.2 and Lem. 14.3]{Smoller}. We start by showing uniqueness. If $y^1$ is a solution of the nonlinear equation to the initial value $y_0^1$ and the control $u_1$ and $y^2$ a solution to the initial value $y_0^2$ and control $u_2$, then the difference $w \coloneqq y^1 - y^2$ satisfies \begin{align}
		\delta w + L(a, x) w  &= \sigma(a) \laplace w + \Lambda(y^2) y^2 - \Lambda(y^1) y^1 + K(u_2) y^2 - K(u_1) y^1,\\
		w(t = 0) &= y_0^1 - y_0^2,\\
        w(a = 0) &= \int_0^\amax \beta(\alpha, x) w(t, \alpha, x) \dd{\alpha},\\
		\partial_\nu w(x \in \partial \Omega) &= 0
	\end{align}
    Thus, by Theorem \ref{thm:linReg} and using \eqref{eq:Lambdaestimate}, we can write the estimate 
    \begin{align} \label{eq:uniquenessEstimate}
		\MoveEqLeft\norm{w}_{L^2((0, T) \times \mathcal{I}, V)}^2 + \norm{w(t)}_\hilbert^2 + \norm{\delta w}_{L^2((0, T) \times \mathcal{I}, V')}^2\\
		&\leqc \norm{y^1_0 - y^2_0}_{L^2(\mathcal{I}, H)}^2 + \norm{\Lambda(y^2) y^2 - \Lambda(y^1) y^1}_{L^2((0, T) \times \mathcal{I}, V')}^2 + \norm{K(u_2) y^2 - K(u_1) y^1}_{L^2((0, T) \times \mathcal{I}, V')}^2\\
		&\leqc \norm{y^1_0 - y^2_0}_{L^2(\mathcal{I}, H)}^2 + \norm{\Lambda(y^2) (y^2 - y^1)}_{L^2((0, T) \times \mathcal{I}, V')}^2 + \norm{\Lambda(y^2 - y^1) y^1}_{L^2((0, T) \times \mathcal{I}, V')}^2\\
		&\quad + \norm{K(u_2) (y^2 - y^1)}_{L^2((0, T) \times \mathcal{I}, V')}^2 + \norm{K(u_2 - u_1) y^1}_{L^2((0, T) \times \mathcal{I}, V')}^2\\
		&\leqc \norm{y^1_0 - y^2_0}_{L^2(\mathcal{I}, H)}^2 + \max\set{\norm{y^1}_{L^\infty((0, T), \hilbert)}^2, \norm{y^2}_{L^\infty((0, T), \hilbert)}^2} \norm{w}_{L^2((0, T), \hilbert)}^2\\
		&\quad + \norm{u_2}_{L^\infty} \norm{w}_{L^2((0, T), \hilbert)}^2 + \norm{u_2 - u_1}_{L^\infty}  \norm{y^1}_{L^2((0, T), \hilbert)}^2.
	\end{align}
	Hence, applying Gronwall's inequality establishes the local Lipschitz continuity of the solution operator with respect to the control and the initial function, and for $y_0^1 = y_0^2$ and $u_1 = u_2$ yields uniqueness.   

Next, we establish the existence of a solution. For this purpose, we define the set
\begin{equation}
		\Gamma := \set{y \in C([0, T], \hilbert) \where \norm{y(t) - T(t) y_0}_\hilbert \leq \norm{y_0}_\hilbert \quad \text{for all } t\in [0,T]},
	\end{equation}
	where $T$ is defined as in Rem. \ref{rem:Semigroup}. Note that $\Gamma$ is a closed subset of $L^\infty((0, T), \hilbert)$). For any $y \in \Gamma$ we can estimate 
    \begin{equation} \label{eq:GammaBound}
	    \norm{y(t)}_{\hilbert} \leq \norm{y(t) - T(t) y_0}_{\hilbert} + \norm{T(t) y_0}_{\hilbert} \leq (1 + M e^{\omega t}) \norm{y_0}_\hilbert,
	\end{equation} and, thus, $\Gamma$ is also a bounded subset of $C([0, T], \hilbert)$.
   
    We also define the mapping $\Phi: \Gamma \to \Gamma$, which maps any  $y \in \Gamma$ to the solution $ v:= \Phi(y) $ of the following equation
    \begin{align}
		\delta v + L(a, x) v + \Lambda(a, x, y) y + K(u) y &= \sigma(a) \laplace v,\\
		v(t = 0) = y_0, \quad v(a = 0) = \int_0^\amax \beta(\alpha, x) v(t, \alpha, x) \dd{\alpha} &\\
		\partial_\nu v(x \in \partial \Omega) &= 0.
	\end{align}
	We first show that $\Phi$ is well-defined, that is, it maps $\Gamma$ into itself. Note that for $y \in \Gamma$, the function $w \coloneqq \Phi(y) - T(\cdot) y_0$ satisfies \begin{align}
		\delta w + L(a, x) w + \Lambda(a, x, y) y + K(u) y &= \sigma(a) \laplace w,\\
		w(t = 0) = 0, \quad w(a = 0) = \int_0^\amax \beta(\alpha, x) w(t, \alpha, x) \dd{\alpha} &\\
		\partial_\nu w(x \in \partial \Omega) &= 0.
	\end{align}
	Applying Theorem \ref{thm:linReg} and using \eqref{eq:Lambdaestimate}, we obtain the estimate
    \begin{equation}
        \begin{split}
\MoveEqLeft\norm{w}_{L^2([0, T] \times \mathcal{I}, V)}^2 + \norm{w(t)}_{L^2(\mathcal{I}, H)}^2 + \norm{\delta w}_{L^2([0, T] \times \mathcal{I}, V')}^2 \\
& \leqc \norm{\Lambda(y) y}_{L^2((0, T) \times \mathcal{I}, V')}^2 + \norm{K(u) y}_{L^2((0, T) \times \mathcal{I}, V')}^2 \\
& \leqc \norm{y}_{L^\infty((0, T), \hilbert)}^2 \norm{y}_{L^2((0, T), \hilbert)}^2 + \norm{u}_{L^\infty}^2 \norm{y}_{L^2((0, T), \hilbert)}^2 \\
& \leqc T \norm{y}_{L^\infty((0, T), \hilbert)}^4 + T \norm{u}_{L^\infty}^2 \norm{y}_{L^\infty((0, T), \hilbert)}^2.
\end{split}
    \end{equation}
    This together with \eqref{eq:GammaBound}, \eqref{eq:Control_C}, the fact that $y \in \Gamma$, and an appropriate choice of $T^*$ small enough with $T^*\leq T$, gives us that 
 \begin{equation} \label{eq:PhiInGamma}
	\norm{w}_{L^{\infty}((0,T^*),\hilbert))}^2 \leq \norm{w}_{L^2([0, T^*] \times \mathcal{I}, V)}^2 + \norm{w}_{L^{\infty}((0,T^*), L^2(\mathcal{I}, H))}^2 + \norm{\delta w}_{L^2([0, T^*] \times \mathcal{I}, V')}^2 \leq \norm{y_0}^2.
	\end{equation} 
    Hence, we can conclude that $v = \Phi(y) \in \Gamma$. 
    
    Now, for any $y^1$, $y^2 \in \Gamma$,  we have for $w \coloneqq \Phi(y^1) - \Phi(y^2)$ with the similar computations as in eq. \eqref{eq:uniquenessEstimate} that 
    \begin{align}
		&\quad \norm{w}_{L^{\infty}((0,T),\hilbert)}^2\\
		&\leqc \max\set{\norm{y^1}_{L^\infty((0, T), \hilbert)}^2, \norm{y^2}_{L^\infty((0, T), \hilbert)}^2} \norm{y^2 - y^1}_{L^2((0, T), \hilbert)}^2 + \norm{u}_{L^\infty} \norm{y^2 - y^1}_{L^2((0, T), \hilbert)}^2\\
		&\leqc \qty(\qty(1 + M e^{\omega T})^2 + \bar{u}^2) T \norm{y^1 - y^2}_{L^\infty((0, T), \hilbert)}^2.
	\end{align}
    In the last line, we have used $y^1$, $y^2 \in \Gamma$. Thus, we conclude that for sufficiently small $T^* \leq T$, the mapping $\Phi$ is a contraction. Consequently, the existence of a fixed point $\bar{y} \in \Gamma$ follows from Banach's fixed-point theorem.  
    
    Furthermore, using $\Phi(y) = y$,  we can deduce even higher regularity and the corresponding energy estimates for the solution. More precisely, by utilizing \eqref{eq:PhiInGamma} and setting  $w \coloneqq y - T(\cdot) y_0$ we can write 
    \begin{align}
        &\quad \norm{y}_{L^2([0, T] \times \mathcal{I}, V)}^2 + \norm{y(t)}_{L^2(\mathcal{I}, H)}^2 + \norm{\delta y}_{L^2([0, T] \times \mathcal{I}, V')}^2\\
	    &\leq \norm{w}_{L^2([0, T] \times \mathcal{I}, V)}^2 + \norm{w(t)}_{L^2(\mathcal{I}, H)}^2 + \norm{\delta w}_{L^2([0, T] \times \mathcal{I}, V')}^2\\
        &\quad + \norm{T(\cdot) y_0}_{L^2([0, T] \times \mathcal{I}, V)}^2 + \norm{T(t) y_0}_{L^2(\mathcal{I}, H)}^2 + \norm{\delta T(\cdot) y_0}_{L^2([0, T] \times \mathcal{I}, V')}^2 \leqc \norm{y_0}^2.
	\end{align}
Thus, we are also finished with the derivation of \eqref{eq:Ener_Estimate_for_nonlinear_state}.    
\end{proof}

\section{Existence of an Optimal Control}
This section introduces and investigates the well-posedness of optimal control problems governed by \eqref{eq:StateEquation}. For convenience in numerical experiments and to make the control more realistic, we consider controls of the form  
\begin{equation}\label{eq:FormControl}
	u(t, a, x) = \sum_{i = 1}^M \sum_{j = 1}^N u_{ij}(t) \eins_{\Omega_i}(x) \eins_{[a_{j-1}, a_j]}(a) \quad \text{with }  u(t) := \{u_{ij}(t) \}_{i,j} \in \R^{M\times N}
\end{equation}
Here, the indicator functions $\eins_{\Omega_i}(x)$, with mutually disjoint supports $\Omega_i \subset \Omega$ ($i = 1, \dots, M$), represent vaccination centers, while the indicator functions $\eins_{[a_{j-1}, a_j]}(a)$, with $0 \leq a_0 < a_1 < \dots < a_N \leq \amax$, distinguish different age classes in the vaccination process.
 In this setting, the performance index function \eqref{eq:Ob_function} will take the form  
\begin{equation}
\label{eq:cost}
	J(u, y) = \frac 12 \int_0^T \int_0^\amax \int_\Omega \abs{g \cdot y(t, a, x)}^2 \dd{x} \dd{a} \dd{t} + \frac \alpha2 \int_0^T \|u(t)\|_{\R^{M \times N}}^2 \dd{t}
\end{equation}
where $g$ is a vector of weights. Further, the control constraints \eqref{eq:Control_C} can be rewritten as  
\begin{equation}
\label{eq:C_cosntraints2}
0 \leq  u_{ij}(t) \leq \bar{u} \qquad \text{f.a.a } t\in (0,T) \text{ and all } i=1,\dots,N \text{ and } j =1 ,\dots,N.
\end{equation}
The optimal control problem is then defined as
\begin{equation}
\label{Opt}
\tag{OC}
\inf\set{ J(u,y) \where (u,y)  \text{ satisfy }  \eqref{eq:StateEquation}  \text{ and }  \eqref{eq:C_cosntraints2} }.
\end{equation}
To establish the well-posedness of optimal control, a key challenge in working with nonlinear age- and space-structured models is the lack of compactness. Common compactness results, such as the Rellich–Kondrachov or Aubin–Lions theorems, are not applicable in this setting, as they require information on $y$, $\nabla_x y$, $y_t$, and $y_a$, ensuring that $y$ belongs to an appropriate Sobolev space. However, we only have information on $y$, $\nabla_x y$ and $\delta y$.

To address this issue, we consider more regular controls, specifically $u \in W^{1, p}((0, T),\R^{M\times N})$ with $ 1 < p \leq \infty$. This regularity is achieved by imposing additional control constraints or adding an extra control cost to the performance function.

\begin{Thm}
\label{Thm:existence_OP}
Suppose that one of the following conditions holds:
\begin{enumerate}[\bfseries C1:]
    \item The performance function in \eqref{Opt} is replaced by $J(u,y)+\tfrac{\alpha_d}{2} \norm{\partial_t u}_{L^2((0,T),\R^{M\times N})}^2$ with some $\alpha_d>0$.   
    \item The following additional control constraints are imposed on the problem \eqref{Opt}:
    \begin{equation}
\label{eq:C_cosntraints3}
\| \partial_t u\|_{L^p((0,T),\R^{M\times N})}  \leq \bar{u}_d \qquad \text{f.a.a } t\in (0,T) \text{ and some } p \text{ satisfying }  1 < p \leq \infty. 
\end{equation}
\end{enumerate}
Then, optimal control problem \eqref{Opt} admits a solution.
\end{Thm}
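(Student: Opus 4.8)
\textbf{Overall strategy.} The plan is to use the direct method of the calculus of variations. I take a minimizing sequence $(u_n, y_n)$ of feasible pairs, extract weakly convergent subsequences, and pass to the limit in both the state equation and the cost functional. The only genuine difficulty — as the authors themselves flag — is compactness: weak convergence of $y_n$ in $L^2((0,T^*)\times\mathcal{I},V)$ with $\delta y_n$ bounded in $L^2((0,T^*)\times\mathcal{I},V')$ is \emph{not} enough to pass to the limit in the quadratic nonlinearity $\Lambda(y_n)y_n$, because Aubin–Lions needs control of \emph{both} $\partial_t y$ and $\partial_a y$ separately, not just their sum $\delta y$. The extra regularity hypothesis on the control in either \textbf{C1} or \textbf{C2} is exactly what repairs this.

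\textbf{Step 1: the minimizing sequence and a priori bounds.} First I note the feasible set is nonempty (e.g. $u\equiv 0$ is admissible and, by Theorem \ref{Thm:existence_Non}, yields a state) and $J$ is bounded below by $0$, so an infimum $j^\star$ exists and a minimizing sequence $(u_n,y_n)$ can be chosen. Under \textbf{C2} the constraint \eqref{eq:C_cosntraints2} together with \eqref{eq:C_cosntraints3} bounds $u_n$ in $W^{1,p}((0,T),\R^{M\times N})$; under \textbf{C1} the augmented cost being bounded along the minimizing sequence forces $\partial_t u_n$ bounded in $L^2$ (and $u_n$ itself bounded in $L^\infty$ by \eqref{eq:C_cosntraints2}), hence again $u_n$ bounded in $W^{1,p}$ with $p=2$. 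In either case, passing to a subsequence, $u_n \rightharpoonup \bar u$ in $W^{1,p}((0,T),\R^{M\times N})$, and since $M,N$ are finite and $W^{1,p}((0,T))\hookrightarrow\hookrightarrow C([0,T])$ in one time variable by Morrey/Arzelà–Ascoli, in fact $u_n \to \bar u$ \emph{strongly} in $C([0,T],\R^{M\times N})$; the box constraints \eqref{eq:C_cosntraints2} pass to the limit so $\bar u$ is admissible. Because the states $y_n$ are all built over a common horizon $T^\star$ depending only on $\bar u_{\max}$, the energy estimate \eqref{eq:Ener_Estimate_for_nonlinear_state} — whose right side $\|y_0\|_\hilbert^2$ is fixed — gives a uniform bound on $y_n$ in $L^2((0,T^\star)\times\mathcal{I},V)\cap C([0,T^\star],\hilbert)$ and on $\delta y_n$ in $L^2((0,T^\star)\times\mathcal{I},V')$. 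Extract $y_n \rightharpoonup \bar y$ in $L^2((0,T^\star)\times\mathcal{I},V)$, $\delta y_n \rightharpoonup \delta\bar y$ in $L^2((0,T^\star)\times\mathcal{I},V')$, and $y_n(t)\rightharpoonup$ something weakly-$\ast$ in $L^\infty((0,T^\star),\hilbert)$.

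\textbf{Step 2: strong convergence of the states (the main obstacle).} This is where the work lies. The key observation is that the characteristic representation \eqref{eq:Bfixed}–\eqref{eq:Bimplicit} decouples the transport from the diffusion: along each characteristic line $\chak(t_0)$ the state solves an honest parabolic equation in $(h,x)$, for which Aubin–Lions \emph{does} apply. Concretely, I would argue that $\bar y$, restricted to characteristics, satisfies \eqref{eq:linChar} with right-hand side $f = -\Lambda(y_n)y_n - K(u_n)y_n$; since the birth term $B_n = B(y_0, f_n)$ depends on $f_n$ only through the bounded linear Volterra solution operator of Theorem \ref{Thm:Bexistence}, and $\Lambda(y_n)y_n$ is bounded in $L^2((0,T^\star)\times\mathcal{I},\hilbert)$ by \eqref{eq:Lambdaestimate} and the $\Gamma$-bound, I get $B_n$ bounded in $L^2((0,T^\star),H)$ and in fact, via the semigroup property and the improved regularity of the Volterra solution, precompact. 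Then on a fixed compact set of characteristics the operators $U(t,s)$, $S(t,s)$ are compact from $H$, resp. $L^2(V')$, into $C([s,\amax],H)$ (parabolic smoothing plus the compact embedding $V\hookrightarrow\hookrightarrow H$), so $y_n \to \bar y$ strongly in $\hilbert$ on $(0,T^\star)$, after a subsequence. An alternative, cleaner route: write $y_n = T_n(\cdot)y_0 + (\text{Duhamel term in }\Lambda(y_n)y_n+K(u_n)y_n)$ as in the proof of Theorem \ref{Thm:existence_Non}; the linear part converges because $u_n\to\bar u$ uniformly makes the associated evolution systems converge, and the Duhamel term gains compactness in time from the integral (an equicontinuity argument) and in $(a,x)$ from parabolic regularization along characteristics. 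I expect this step to require the most care, in particular checking that the time-translation equicontinuity needed for Aubin–Lions-type compactness survives the characteristic decomposition uniformly in $t_0$; Lemma \ref{lem:NullSets} guarantees the fibered estimates assemble into a genuine estimate on $(0,T^\star)\times\mathcal{I}$.

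\textbf{Step 3: passage to the limit and lower semicontinuity.} With $y_n \to \bar y$ strongly in $L^2((0,T^\star)\times\mathcal{I},\hilbert)$ and $u_n \to \bar u$ uniformly, the nonlinear term converges: using bilinearity, $\|\Lambda(y_n)y_n - \Lambda(\bar y)\bar y\|_{L^2((0,T^\star)\times\mathcal{I},V')} \leqc \|\Lambda(y_n)(y_n-\bar y)\|_{L^2(\hilbert)} + \|\Lambda(y_n-\bar y)\bar y\|_{L^2(\hilbert)} \to 0$ by \eqref{eq:Lambdaestimate} and the uniform $\Gamma$-bound, and similarly $K(u_n)y_n \to K(\bar u)\bar y$ since $K$ depends linearly and boundedly on $u$. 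The linear terms $L y_n$ and $\sigma\nabla y_n$ pass to the limit by weak convergence, and $\delta y_n \rightharpoonup \delta\bar y$; hence $\bar y$ satisfies the weak formulation \eqref{eq:weak_formualation} with control $\bar u$, i.e. $(\bar u,\bar y)$ is a feasible pair (uniqueness of the state, Theorem \ref{Thm:existence_Non}, then identifies $\bar y$ as \emph{the} state associated to $\bar u$, though this is not strictly needed). Finally $J$ (and the augmented functional under \textbf{C1}) is continuous in $y$ for the strong $L^2$ topology and convex-continuous, hence weakly lower semicontinuous, in $u$ (the added term $\tfrac{\alpha_d}{2}\|\partial_t u\|_{L^2}^2$ is convex and $\partial_t u_n \rightharpoonup \partial_t\bar u$ in $L^2$), so $J(\bar u,\bar y) \leq \liminf_n J(u_n,y_n) = j^\star$, which forces equality. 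Therefore $(\bar u,\bar y)$ solves \eqref{Opt}. $\square$
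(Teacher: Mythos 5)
Your Steps 1 and 3 match the paper's argument: direct method, boundedness of the minimizing controls in $W^{1,p}((0,T),\R^{M\times N})$ (from the derivative penalty in case C1 or the constraint \eqref{eq:C_cosntraints3} in case C2), the compact embedding into $C([0,T],\R^{M\times N})$ giving strong convergence $u_n \to u^*$, and weak lower semicontinuity of the added term in case C1. The gap is in your Step 2. You attempt to obtain strong convergence of the states by a compactness argument on the state sequence itself (weak limits, Aubin--Lions along characteristics, compactness of the evolution operators, precompactness of the Volterra solutions $B_n$), but none of this is carried out, and at least one of its key claims is false as stated: the map $v_0 \mapsto U(\cdot,s)v_0$ is \emph{not} compact from $H$ into $C([s,\amax],H)$, since evaluation at $t=s$ recovers the identity on $H$. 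Likewise the asserted precompactness of $B_n$ and the uniform assembly of the fibered compactness estimates over all characteristics are announcements rather than proofs. As written, Step 2 does not establish strong convergence of $y_n$.

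What you are missing is that no compactness of the state sequence is needed at all. Once $u_n \to u^*$ strongly in $C([0,T],\R^{M\times N})$, let $y^* \coloneqq y(u^*)$ be the state associated with $u^*$ (which exists by Theorem \ref{Thm:existence_Non}) and apply the stability estimate \eqref{eq:uniquenessEstimate} to the pairs $(y_n,u_n)$ and $(y^*,u^*)$ with the same initial datum: combined with the uniform bound on $\norm{y_n}_{L^\infty((0,T^*),\hilbert)}$ from \eqref{eq:Ener_Estimate_for_nonlinear_state} and Gronwall's inequality, this yields
\begin{equation}
\norm{y_n - y^*}_{L^\infty((0,T^*),\hilbert)}^2 \leqc \norm{u_n - u^*}_{L^\infty((0,T),\R^{M\times N})}\, \norm{y^*}_{L^2((0,T^*),\hilbert)}^2 \longrightarrow 0,
\end{equation}
i.e.\ strong convergence $y_n \to y^*$ in $C([0,T^*],\hilbert)$, with $y^*$ automatically identified as the state of $u^*$. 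This is exactly how the paper argues: the local Lipschitz continuity of the control-to-state map, already established in the uniqueness part of Theorem \ref{Thm:existence_Non}, transfers the strong convergence of the controls to the states. It replaces your entire Step 2; your Step 3 then goes through unchanged.
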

\begin{proof}
The proof is based on the direct method in the calculus of variations. Since $J$ is bounded from below, its infimum exists and is nonnegative. This allows us to choose a minimizing sequence $\{u_n\}_{n}$ with corresponding states $\{y_n\}_{n}$ such that $J(u_n, y_n) \to \inf_{u}J(u,y(u))$, or in the case that C1 holds, that $J(u_n, y_n) + \frac{\alpha_d}{2} \|\partial_t u_n\|_{L^2((0,T),\mathbb{R}^{M\times N})}^2$ converges to its infimum as $n\to \infty$. We consider both cases C1 and C2. In the case where C1 holds, since the cost function is radially unbounded due to the control cost $\|\cdot\|^2_{H^1((0,T),\mathbb{R}^{M\times N})}$, we can infer that the sequence $\{u_n\}_n$ is bounded in $H^1((0, T), \mathbb{R}^{M \times N})$. If C2 holds, the boundedness of $\{u_n\}_n$ in space $W^{1, p}((0, T), \mathbb{R}^{M \times N})$ follows directly. Thus, in either case, there exists a weakly convergent subsequence $u_n \rightharpoonup u^*$ with $u^* \in W^{1, p}((0, T), \mathbb{R}^{M \times N})$, where for C1 we have $p = 2$. We use the same notation for the sequence and its subsequence for convenience.

Since the space $W^{1, p}((0, T), \mathbb{R}^{M \times N})$ with $1 <  p < \infty$  is compactly embedded into $C([0, T], \mathbb{R}^{M \times N})$ (see, e.g., \cite[Thm. 6.3]{AdamsFournier}), we can conclude that $u_n \to u^*$ strongly in $C([0, T], \mathbb{R}^{M \times N})$. Now, it remains to show that the subsequence $\{y_n\}_n$ associated with $\{u_n\}_n$ also converges to $y^*$, the state associated with $u^*$. In a similar manner to \eqref{eq:uniquenessEstimate}, we can write for almost every $t \in (0,T)$ that 
\begin{align}
		&\quad \norm{y_n-y^*}_{L^2((0, T) \times \mathcal{I}, V)}^2 + \norm{y_n(t)-y^*(t)}_{\hilbert}^2 + \norm{\delta y_n - \delta y^*}_{L^2((0, T) \times \mathcal{I}, V')}^2\\
		&\leqc  \max\set{\norm{y_n}_{L^\infty((0, T), \hilbert)}^2, \norm{y^*}_{L^\infty((0, T), \hilbert)}^2} \norm{y_n-y^*}_{L^2((0, T), \hilbert)}^2\\
		&\quad + \norm{u_n}_{L^\infty((0, T), \R^{M \times N})} \norm{y_n-y^*}_{L^2((0, T), \hilbert)}^2 + \norm{u_n - u^*}_{L^\infty((0, T), \R^{M \times N)}}  \norm{y^*}_{L^2((0, T), \hilbert)}^2.
\end{align}
Together with the uniform boundedness of $\{\|u_n\|_{L^\infty((0, T), \mathbb{R}^{M \times N})}\}_n$ and $\{\|y_n\|_{L^\infty((0, T), \mathbb{R}^{M \times N})}\}_n$ (which follows from \eqref{eq:Ener_Estimate_for_nonlinear_state}), and applying Gronwall's inequality, we obtain
\begin{align}
		\norm{y_n-y^*}_{L^{\infty}((0,\infty),\hilbert)}^2 &\leqc \norm{u_n - u^*}_{L^\infty((0, T), \R^{M \times N)}}  \norm{y^*}_{L^2((0, T), \hilbert)}^2,
\end{align}
This shows that $y_n \to y^*$ strongly in $C([0, T], \mathcal{H})$. This strong convergence allows us to conclude that $J(u_n, y_n) \to J(u^*, y^*) = \inf_u J(u,y(u))$, proving the existence of an optimal control in case C2. The existence for case C1 follows from the weak convergence $u_n \rightharpoonup u^*$ in $H^1((0, T), \mathbb{R}^{M \times N})$ and the weak lower semicontinuity of the control cost $\|\partial_t \cdot\|_{L^2((0, T), \mathbb{R}^{M \times N})}$. Therefore, we conclude that
\begin{align}
    J(u^*, y^*) + \frac{\alpha_d}{2} \norm{\partial_t u^*}_{L^2((0,T),\R^{M\times N})}^2 &\leq \liminf_{n \to \infty} \left( J(u_n, y_n) + \frac{\alpha_d}{2} \norm{\partial_t u_n}_{L^2((0,T),\R^{M\times N})}^2 \right) \\
    &  = \inf_u \left(J(u,y(u)) + \frac{\alpha_d}{2} \norm{\partial_t u}_{L^2((0,T),\R^{M\times N})}^2\right).
    \end{align}
Thus, the proof is complete.
\end{proof}

\begin{Bem}
  
It is natural and numerically preferable to consider more general controls, specifically those belonging to $L^2((0,T), \mathbb{R}^{M \times N})$. In this case, the boundedness of the minimizing sequence follows either from the box constraints in \eqref{eq:C_cosntraints2} or the presence of a strictly positive $\alpha$ in \eqref{eq:cost}. As mentioned, the primary challenge lies in the lack of compactness for the state equation, making it unclear whether the weak limit of the minimizing control subsequence corresponds to the weak limit of the associated state subsequence.

In Theorem \ref{Thm:existence_OP}, we addressed this issue by exploiting additional regularity for the control and leveraging a compact embedding, which led to strong convergence. However, for controls in $L^2((0,T), \mathbb{R}^{M \times N})$, compactness becomes an issue for both the control and state, complicating the attainment of strong convergence.

Nevertheless, if the control enters the state equation linearly (rather than bilinearly) and appropriate structural conditions are imposed on the nonlinearity, the existence of optimal control can still be established using only weak convergence. More precisely, we assume that
\begin{itemize}
        \item The state equation has the form 
        \begin{equation}
            \delta y + L(a, x) y + \Lambda(a, x, y) y = \sigma(a) \laplace y + K(u).
        \end{equation}
        That is, the control enters the state equation linearly.
        \item We can extend the nonlocal aspect of $\Lambda$ backwards in time, i.e., we let 
        \begin{equation}
            \begin{split}
	           \Lambda(t, a, x, y)^{hi} &= \int_0^t \int_0^\amax \int_\Omega k^{hij}(t, \theta, a, \alpha, x, \xi) y_j(\theta, \alpha, \xi) \dd{\xi} \dd{\alpha} \dd{\theta}\\& = \skpr{k(t, \cdot, a, \cdot, x, \cdot), I}_{L^2([0, t] \times [0, \amax] \times \Omega)}.
             \end{split}
            \end{equation}
              Moreover, we assume that the kernel $k$ factorizes as follows (for brevity, let $G:= [0, T] \times \mathcal{I} \times \Omega$, $z: = (t, a, x)$, and $\zeta:= (\theta, \alpha, \xi)$)
 \begin{equation}
	           k(z, \zeta) = \sum_{j = 1}^N k_{1j}(z) \odot k_{2j}(\zeta),
            \end{equation} 
            where $\odot$ denotes element-wise multiplication of the 3-tensors $k_{1j}$, $k_{2j}$. Functions of this structure are dense in $L^2(G \times G)$, and if we allow $N = \infty$, they are even dense in $L^\infty(G, L^2(G))$, which is the natural domain for $k$ (See e.g., \cite[Lems. 1.2.19 and 2.1.4]{HNVW}) The temporal nonlocality can be interpreted as infections from germs present in the environment and thus from individuals that have been infectious in the past.       
    \end{itemize}    
     Under these assumptions, the above proof can be carried out without requiring strong convergence.  Indeed, the linear terms are weakly continuous by standard arguments, and the nonlinearity also becomes weakly continuous due to the kernel structure. Specifically, for any sequence $y_n \rightharpoonup y$ in $L^2(G)$ and any $y' \in L^2(G)$, we have   
    \begin{align}
	\skpr{\Lambda(y_n) y_n, y'}_{L^2(G)} &= \skpr{\skpr{k(z, \cdot), y_n}_{L^2(G)} y_n, y'}_{L^2(G)} = \skpr{\skpr{\sum_{j = 1}^N k_{1j}(z) k_{2j}, y_n}_{L^2(G)} y_n, y'}_{L^2(G)}\\
	&= \skpr{\sum_{j = 1}^N k_{1j} \skpr{ k_{2j}, y_n}_{L^2(G)} y_n, y'}_{L^2(G)} = \sum_{j = 1}^N \skpr{k_{2j}, y_n}_{L^2(G)} \skpr{k_{1j} y_n, y'}_{L^2(G)}.
\end{align}
Thus, we can pass to the limit in the weak formulation \eqref{eq:weak_formualation} and complete the proof.
\end{Bem}

\section{First Order Optimality Conditions}
In this section, we derive first-order optimality conditions for the optimal control problem \eqref{Opt}, following \cite[Section 1.7.2]{HPUU}. For this purpose, we define 
\begin{align}
    U &\coloneqq L^2((0,T),\R^{M \times N}),\\
	U^{\text{ad}} &\coloneqq \set{u \in U \where  u \text{ satisfies \eqref{eq:C_cosntraints2}} }   ,\\
	Y &\coloneqq \set{y \in L^2((0, T) \times \mathcal{I}, V) \cap C([0, T], L^2(\mathcal{I}, H)) \cap C(\bar{\mathcal{I}}, L^2((0, T), H)) \where  \delta y \in L^2((0, T) \times \mathcal{I}, V')},\\
	Z &\coloneqq L^2((0, T) \times \mathcal{I}, V') \times L^2(\mathcal{I}, H) \times L^2((0, T), H)
\end{align}
and consider the mappings $J:  Y \times U \to \R$ and $e:  Y \times U \to  Z$ defined by
\begin{align}
	J:\, & (y, u) \in Y \times U \mapsto \frac 12 \iiint \abs{g \cdot y(t, a, x)}^2 \dd{(t, a, x)} + \frac\alpha2 \iiint \abs{u(t, a, x)}^2 \dd{(t, a, x)} \in \R \\
	e:\, &(y, u) \in Y \times U \mapsto \qty(\delta y + L y + \Lambda(y) y + \tilde{K}(u) y - \sigma \laplace y,\, y(t = 0) - y_0,\, y(a = 0) - \int_0^\amax \beta y \dd{a})\in  Z.
\end{align}
Here, for $u \in U$ we let \begin{equation}
    \tilde{K}(u) \coloneqq K\qty(\sum_{i = 1}^M \sum_{j = 1}^N u_{ij}(t) \eins_{\Omega_i}(x) \eins_{[a_{j-1}, a_j]}(a)).
\end{equation}
Then, the optimal control problem \ref{Opt} can be rewritten as 
\begin{equation}
	\inf J(y, u) \qquad \text{ subject to } \qquad e(y, u) = 0.
\end{equation}
We know that $Y$, $Z$ are Banach spaces and $U^{\text{ad}}$ is nonempty, convex and closed in $L^2((0,T),\R^{M \times N})$.  In Theorem \ref{Thm:existence_Non},  we established the existence of a unique solution $y = y(u)$ to the equation $e(y(u), u) = 0$ for all $u \in U^{\text{ad}}$.

 It is straightforward to verify that 
$J$ is continuously Fréchet differentiable. Moreover, apart from the nonlinear terms $\Lambda(y) y$ and $\tilde{K}(u) y$, the mapping  $e$ consists of continuous linear terms, which are also continuously Fréchet differentiable. The nonlinear terms themselves are continuous bilinear forms and, therefore, continuously Fréchet differentiable. In particular, for the nonlinear term $\Lambda(y)y$ and for any given $y,h \in Y$ 
we can write
\begin{equation}
	\frac 1t (\Lambda(y + th) (y + th) - \Lambda(y) y) = \frac 1t (\Lambda(th) y + \Lambda(y) th + \Lambda(th) th) = \Lambda(h) y + \Lambda(y) h + t \Lambda(h) h
\end{equation}
 Sending $t \to 0$, we obtain the directional derivative at the point  $y$ in the direction $h$. Similar calculations can be carried out for the term $\tilde{K}(u) y$. Hence we can conclude the following Lemma.
\begin{Lem}
    Let $y \in Y$, $u \in U^{\text{ad}}$ be given. Then for every $h \in Y$ and $k \in U$ with $u + \epsilon k \in U^{\text{ad}}$, the directional derivatives \begin{equation}
        e_y(y, u) h = \lim_{t \to 0} \frac 1t \qty(e(y+th, u) - e(y, u)), \quad e_u(y, u) k = \lim_{t \to 0} \frac 1t \qty(e(y, u+tk) - e(y, u))
    \end{equation}
    exist and the following equalities 
    \begin{align}
	e_y(y, u) h &= \qty(\delta h + L h + \Lambda(y) h + \Lambda(h) y + \tilde{K}(u) h - \sigma \laplace h,\, h(t = 0),\, h(a = 0) - \int_0^\amax \beta h \dd{a}),\\
	e_u(y, u) k &= (\tilde{K}(k)y, 0, 0)
\end{align}
hold
\end{Lem}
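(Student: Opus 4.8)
The plan is to exploit the fact that $e$ decomposes into an affine-linear part, for which the difference quotients in the definition of the directional derivatives are already exact, plus the two genuinely bilinear contributions $\Lambda(y)y$ and $\tilde{K}(u)y$, which I would expand using bilinearity and then argue that the resulting quadratic remainder vanishes in the limit.

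First I would treat $e_y(y,u)h$. Fixing $h \in Y$ and $t \neq 0$, the terms $\delta y + Ly - \sigma\laplace y$ are linear in $y$, so their difference quotient equals $\delta h + Lh - \sigma\laplace h$ for every $t$; likewise the second and third components of $e$ are affine in $y$ and independent of $u$, so their difference quotients are exactly $h(t=0)$ and $h(a=0) - \int_0^\amax \beta h \dd{a}$ — here I would invoke $y, h \in Y$, in particular the traces in $C([0,T],\hilbert)$ and $C(\bar{\mathcal{I}}, L^2((0,T),H))$, to see that these are legitimate elements of $L^2(\mathcal{I},H)$ and $L^2((0,T),H)$, the relevant factors of $Z$. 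Since $\tilde{K}(u)$ is multiplication by a fixed $L^\infty$ function (Assumption \ref{assump:non}, item 2) not involving $y$, the difference quotient of $\tilde{K}(u)y$ is exactly $\tilde{K}(u)h$, again for every $t$, and $\tilde{K}(u)h \in L^2((0,T)\times\mathcal{I},V')$ since $h \in L^2((0,T)\times\mathcal{I},V)$. The only term producing a remainder is the infection term: bilinearity of $\Lambda$ gives $\tfrac1t(\Lambda(y+th)(y+th) - \Lambda(y)y) = \Lambda(y)h + \Lambda(h)y + t\,\Lambda(h)h$. I would then use estimate \eqref{eq:Lambdaestimate} together with the continuous embedding $\hilbert = L^2(\mathcal{I},H) \hookrightarrow L^2(\mathcal{I},V')$ induced by $H \hookrightarrow V'$ to bound $\norm{\Lambda(h)h}_{L^2((0,T)\times\mathcal{I},V')}$ by $\norm{h}_{L^\infty((0,T),\hilbert)}\norm{h}_{L^2((0,T),\hilbert)}$, which is finite precisely because $h \in Y \subset C([0,T],\hilbert)$; hence $t\,\Lambda(h)h \to 0$ in $L^2((0,T)\times\mathcal{I},V')$ as $t\to 0$, while $\Lambda(y)h + \Lambda(h)y$ already lies in that space by the same estimate. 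Collecting the four pieces and letting $t\to 0$ yields the claimed formula for $e_y(y,u)h$.

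For $e_u(y,u)k$ the situation is simpler: only $\tilde{K}(u)y$ depends on $u$, and it does so linearly while $y$ stays fixed, so $\tfrac1t(\tilde{K}(u+tk)y - \tilde{K}(u)y) = \tilde{K}(k)y$ exactly, with no remainder, and $\tilde{K}(k)y \in L^2((0,T)\times\mathcal{I},V')$ again by Assumption \ref{assump:non}, item 2. The remaining two components of $e$ do not see $u$ at all, so their difference quotients vanish identically, leaving $e_u(y,u)k = (\tilde{K}(k)y, 0, 0)$. I would finish by remarking that the admissibility hypothesis $u + \epsilon k \in U^{\text{ad}}$ is not needed for the mere existence of the directional derivative, since $e$ is defined on all of $Y \times U$; it is recorded because it is the feasibility condition under which $k$ is later used as a test direction in the variational inequality.

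The computations are routine; the one point requiring care is the remainder estimate for $\Lambda(h)h$, where one must verify that the $Y$-regularity of $h$ — not merely $h \in L^2((0,T)\times\mathcal{I},V)$ — is what guarantees $\Lambda(h)h \in L^2((0,T)\times\mathcal{I},V')$, so that $t\,\Lambda(h)h$ makes sense in $Z$ and tends to zero there. I would also note that the same expansions show that $y \mapsto e_y(y,u)$ and $(y,u)\mapsto e_u(y,u)$ are continuous, so $e$ is in fact continuously Fréchet differentiable, though only the directional derivatives are needed in what follows.
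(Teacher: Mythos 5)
Your proposal is correct and follows essentially the same route as the paper: the paper likewise observes that $e$ splits into continuous linear terms plus the bilinear terms $\Lambda(y)y$ and $\tilde{K}(u)y$, expands $\tfrac1t(\Lambda(y+th)(y+th)-\Lambda(y)y)=\Lambda(h)y+\Lambda(y)h+t\,\Lambda(h)h$, and lets $t\to0$, with the $u$-derivative handled analogously. Your additional care in justifying that the remainder $t\,\Lambda(h)h$ tends to zero in $L^2((0,T)\times\mathcal{I},V')$ via \eqref{eq:Lambdaestimate} and the embedding $H\hookrightarrow V'$ is a welcome elaboration of a step the paper leaves implicit, but it is not a different argument.
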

\begin{Lem}
\label{lem:bounded_inverse}
	Suppose that Assumption \ref{assump:non} holds. For all $u \in U^{\text{ad}}$ the linear map $e_y(y(u), u)$ has a bounded inverse.
\end{Lem}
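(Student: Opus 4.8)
The plan is to show that for any $u \in U^{\text{ad}}$ and any right-hand side $(f, y_0, r) \in Z$, the linearized equation
\begin{equation}
	\delta h + L h + \Lambda(y(u)) h + \Lambda(h) y(u) + \tilde{K}(u) h - \sigma \laplace h = f, \quad h(t=0) = y_0, \quad h(a=0) - \int_0^\amax \beta h \dd{a} = r \notag
\end{equation}
has a unique solution $h \in Y$ depending boundedly on the data; the bounded inverse then follows from the open mapping theorem once we know $e_y(y(u),u)$ is a bounded bijection. Since $e_y(y(u),u)$ is manifestly bounded $Y \to Z$ (each term is a continuous linear operator, using Assumption \ref{assump:non} for $\Lambda$ and $\tilde K$), the whole content is well-posedness of this linearized problem. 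The key observation is that this equation has exactly the structure of the nonlinear state equation \eqref{eq:StateEquation} \emph{linearized around $y(u)$}: the troublesome nonlinear term $\Lambda(h)y(u)$ is now \emph{linear} in the unknown $h$, and by Assumption \ref{assump:non}(3) it satisfies $\norm{\Lambda(h) y(u)}_\hilbert \leq c(k)\,\norm{h}_\hilbert\,\norm{y(u)}_\hilbert$, so it is a bounded linear perturbation of the kind already controlled. Likewise $\Lambda(y(u)) h$ and $\tilde K(u) h$ are bounded linear lower-order terms, and the inhomogeneous birth datum $r$ only shifts the Volterra equation \eqref{eq:voltera} by an extra $L^2((0,T),H)$ source, which Theorem \ref{Thm:Bexistence} (via \cite[Cor.~0.2]{Pruess}) handles.

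Concretely, I would run the same fixed-point scheme as in the proof of Theorem \ref{Thm:existence_Non}, but now on the whole interval $[0,T]$ rather than a short one, which is possible precisely because the map is \emph{linear}. Define, for $h \in C([0,T],\hilbert)$, the operator $\Psi(h)$ as the solution $w$ of the linear-with-implicit-birth-law problem \eqref{eq:linear_implicit} with forcing $f - \Lambda(y(u))h - \Lambda(h)y(u) - \tilde K(u)h$, initial datum $y_0$, and birth shift $r$; Theorem \ref{thm:linReg} (together with its obvious inhomogeneous-birth variant) guarantees $\Psi$ is well-defined $C([0,T],\hilbert) \to Y \hookrightarrow C([0,T],\hilbert)$. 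Using Theorem \ref{thm:linReg} and the estimates \eqref{eq:Lambdaestimate}, for $h_1, h_2$ one gets, for a.e.\ $t$,
\begin{equation}
	\norm{\Psi(h_1)(t) - \Psi(h_2)(t)}_\hilbert^2 \leqc \bigl(\norm{y(u)}_{L^\infty((0,T),\hilbert)}^2 + \norm{u}_{L^\infty}^2\bigr)\int_0^t \norm{h_1(s) - h_2(s)}_\hilbert^2 \dd{s}, \notag
\end{equation}
and Gronwall's inequality turns $\Psi$ into a contraction in a suitably weighted norm on $C([0,T],\hilbert)$ (or, equivalently, iterating $\Psi$ yields a power of $T$ divided by a factorial), giving a unique fixed point $h$. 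Feeding $h$ back into Theorem \ref{thm:linReg} upgrades the regularity to $h \in Y$ and yields the a priori bound $\norm{h}_Y \leqc \norm{f}_{L^2((0,T)\times\mathcal{I},V')} + \norm{y_0}_{\hilbert} + \norm{r}_{L^2((0,T),H)}$, i.e.\ continuity of the inverse; the continuity $a \mapsto h(\cdot,a,\cdot)$ in $L^2((0,T),H)$ needed for membership in $Y$ comes from Remark \ref{rem:Randbed}.

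The main obstacle is bookkeeping rather than conceptual: one must check that Theorems \ref{Thm:Bexistence} and \ref{thm:linReg}, stated for the equation with $f$ only and homogeneous birth datum, still apply when (i) an extra zeroth-order linear term $\Lambda(y(u))\cdot + \tilde K(u)\cdot$ is present — this is absorbed either by noting it is $L^\infty$-bounded in the $(a,x)$ variables and can be moved into $L$ after checking the coercivity/boundedness-away-from-zero hypotheses of Assumption \ref{assum:Lin} are not actually needed for \emph{lower-order} perturbations, or by treating it as part of the forcing inside the fixed point; and (ii) the birth law is inhomogeneous, $y(a=0) = B + r$, which only adds a known $L^2((0,T),H)$ term to the right-hand side of the Volterra equation \eqref{eq:voltera} and hence does not affect solvability. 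A minor point is that the term $\Lambda(h) y(u)$ maps into $\hilbert = L^2(\mathcal{I},H) \subset L^2(\mathcal{I},V')$ but need not be better, so one keeps all forcing in $L^2((0,T)\times\mathcal{I},V')$ throughout, exactly as in Theorem \ref{thm:linReg}; no new regularity of $y(u)$ beyond $y(u) \in C([0,T],\hilbert)$ from Theorem \ref{Thm:existence_Non} is required.
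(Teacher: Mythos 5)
Your proposal is correct and follows essentially the same route as the paper: reduce to well-posedness of the linearized equation, solve the basic linear part along characteristics, absorb the inhomogeneous birth datum into the Volterra equation via \cite[Cor.~0.2]{Pruess}, and treat $\Lambda(y(u))h + \Lambda(h)y(u) + \tilde K(u)h$ as a bounded linear perturbation handled by a fixed-point argument with a Gronwall-based a priori bound. The only (immaterial) difference is that you obtain a contraction on all of $[0,T]$ via a weighted norm, whereas the paper contracts on a short interval and then extends globally by continuation using the same Gronwall estimate.
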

\begin{proof}
	The statement is equivalent to demonstrating that for every given tuple $(f, h_0, B_0) \in Z$, the equation 
        \begin{gather} \label{eq:linearizedEquation}
		\delta h + (L +\Lambda(y) + \tilde{K}(u)) h + \Lambda(h) y - \sigma \laplace h = f,\\
		h(t = 0) = h_0,\quad h(a = 0) - \int_0^\amax \beta h \dd{a} = B_0,\quad \partial_\nu h(x \in \partial \Omega) = 0.
	\end{gather}
	admits a unique weak solution $h \in Y$. To prove this, similarly to the proof of Theorem \ref{thm:linReg}, we proceed through several steps. First, by neglecting the terms involving $\Lambda$, $K$ and $\beta$,  we consider the following linearized equation
    \begin{gather}
    \label{eq:linearized_eq}
		\delta h + L h - \sigma \laplace h = f,\\
		h(t = 0) = h_0,\quad h(a = 0) = B_0,\quad \partial_\nu h(x \in \partial \Omega) = 0.
	\end{gather}
        Similar to \eqref{eq:Bfixed} for \eqref{eq:linear_with_age}, it can be shown that \eqref{eq:linearized_eq} has a solution of the form
 \begin{equation}
 \label{eq:sol_of_linearized}
		h(t, a, x) = \begin{cases}
			U(a,0) B_0(t-a) + S(a, 0) \restr{f}_{\chak(t-a)} & t > a\\
			U(a,a-t) h_0(a-t) + S(a, a-t) \restr{f}_{\chak(t-a)} & t \leq a
		\end{cases}.
	\end{equation}
	In the next step, we replace $B_0$ by a function $b \in L^2([0, T], H)$  satisfying 
    \begin{equation}
    \label{eq:implict_linearized}
    b(t) = h(a = 0) = B_0(t) + \int_0^\amax \beta h \dd{a}.
    \end{equation}
    Together with \eqref{eq:sol_of_linearized}, and in a similar manner to \eqref{eq:voltera}, we obtain the Volterra equation 
    \begin{align}
		b(t) &= B_0(t) + \int_0^{\min(t, \amax)} \beta(\alpha, x) U(\alpha, 0) b(t - \alpha) \dd{\alpha}\\
		&\quad + \int_{\min(t, \amax)}^\amax \beta(\alpha) U(\alpha, \alpha - t) h_0(\alpha-t) \dd{\alpha} + \int_{t-\amax}^t \beta(t-t_0) S(t-t_0, t_0^-) \restr{f}_{\chak(t_0)} \dd{t_0},
	\end{align}
	which by \cite[Cor. 0.2]{Pruess} has a solution in $L^2((0, T), H)$ satisfying \begin{equation} \label{eq:smallbEstimate}
		\norm{b}_{L^2((0, T), H)} \leqc \norm{h_0}_\hilbert + \norm{B_0}_{L^2((0, T), H)} + \norm{f}_{L^2((0, T) \times \mathcal{I}, V')}.
	\end{equation} Hence, the solution of \eqref{eq:linearized_eq} with initial condition $(a = 0)$ given by \eqref{eq:implict_linearized} can be expressed by
    \begin{equation}
		h(t, a, x) = \begin{cases}
			U(a,0) b(t-a) + S(a, 0) \restr{f}_{\chak(t-a)} & t > a\\
			U(a,a-t) h_0(a-t) + S(a, a-t) \restr{f}_{\chak(t-a)} & t \leq a
		\end{cases},
	\end{equation}
	To include the remaining terms in the equation, we will employ the Banach fixed-point argument, as demonstrated in the proof of Theorem  \ref{Thm:existence_Non}. For a given  $h \in Y$,  let $\Phi(h)$ denote the solution $k \in Y$ to the following equation
    \begin{gather}
		\delta k + L k - \sigma \laplace k = f - (\Lambda(y) + \tilde{K}(u)) h - \Lambda(h) y,\\
		k(t = 0) = h_0,\quad k(a = 0) - \int_0^\amax \beta k \dd{a} = B_0,\quad \partial_\nu k(x \in \partial \Omega) = 0.
	\end{gather}
	With similar arguments as in the proof of Theorem \ref{Thm:existence_Non}, $\Phi$ is well-defined. 
    Further, using \eqref{eq:Lambdaestimate}, we can derive the following estimate 
     \begin{align}
        \MoveEqLeft \norm{\Phi(h) - \Phi(k)}_{L^{\infty}((0,\infty),\hilbert)} \leqc \norm{\Lambda(y) (h-k)}_{L^2((0, T) \times \mathcal{I}, V')} + \norm{\tilde{K}(u)(h-k)}_{L^2((0, T) \times \mathcal{I}, V')}\\
        & + \norm{\Lambda(h-k)y}_{L^2((0, T) \times \mathcal{I}, V')}\leqc \norm{h-k}_{L^2((0, T) \times \mathcal{I}, H)}\leqc \sqrt{T} \norm{h-k}_{L^\infty((0, T), \hilbert))}
	\end{align}
    Hence, choosing $T$ sufficiently small, we can conclude that $\Phi$ is a contraction on $C([0, T], \hilbert))$ (which is a superset of $Y$). Therefore, we have a local existence (in time) of the solution to \eqref{eq:linearizedEquation}. The existence of the global solution follows from the following energy estimate which holds globally. Let $h$ be any fixed point of $\Phi$, then combining \ref{eq:yEstimate} and \ref{eq:smallbEstimate} yields for almost all $t \in (0, T)$ that
    \begin{gather}
        \norm{h(t)}_\hilbert^2 \leqc \norm{f}_{L^2((0, T), V')}^2 + \norm{h_0}_\hilbert^2 + \norm{B_0}_{L^2((0, T), H)}^2 + \norm{\Lambda(y) h + \Lambda(h) y + \tilde{K}(u) h}_{L^2((0, t), \hilbert)}^2\\
        \leqc \norm{f}_{L^2((0, T), V')}^2 + \norm{h_0}_\hilbert^2 + \norm{B_0}_{L^2((0, T), H)}^2 + \qty(\norm{y}_{L^\infty((0, T), \hilbert)}^2 + \norm{u}_{L^\infty((0, T), \hilbert)}^2) \norm{h}_{L^2((0, t), \hilbert)}^2,
    \end{gather}
    where in the last line, we have used \eqref{eq:Lambdaestimate}. Now, an application of Gronwall's lemma yields the following estimate 
    \begin{equation}
        \norm{h}_{L^{\infty}((0,T),\hilbert)}^2 \leqc  e^{\qty(\norm{y}_{L^\infty((0, T), \hilbert)}^2 + \norm{u}_{L^\infty((0, T), \hilbert)}^2)T} \qty(\norm{f}_{L^2((0, T), V')}^2 + \norm{h_0}_\hilbert^2 + \norm{B_0}_{L^2((0, T), H)}^2).     
    \end{equation}
    This uniform estimate, combined with standard continuation arguments, shows that the solution obtained via Banach's Fixed Point Theorem can indeed be extended to the entire time interval $(0, T)$. Furthermore, the estimate establishes the uniqueness of the solution, as it directly shows that the difference between two solutions to the linear equation with identical initial and boundary conditions must be zero at all times. This completes the proof.
\end{proof}

Next, we will derive the adjoint equation. In other words, we will find $p \in Z'$ with  \begin{equation}
	Z' = L^2((0, T) \times \mathcal{I}, V) \times L^2(\mathcal{I}, H) \times L^2((0, T), H)
\end{equation}
which satisfies
\begin{equation}
    e_y(y(u), u)^* p = -J_y(y(u), u).
\end{equation}
 To show this,  we can write for all $h \in Y$ that
 \begin{align}
	-\skpr{g y(u), gh} &= \langle p_1, \delta h + L h + \Lambda(y) h + \Lambda(h) y + \tilde{K}(u) h - \sigma \laplace h \rangle_{L^2((0, T) \times \mathcal{I}, V), L^2((0, T) \times \mathcal{I}, V')}\\
	&\quad + \skpr{p_2, h(t = 0)}_{L^2(\mathcal{I}, H)} + \skpr{p_3, h(a = 0) - \int_0^\amax \beta h \dd{a}}_{L^2((0, T), H)}\\
	&\leftstackrel{PI}{=} - \skpr{h, \delta p_1}_{L^2((0, T) \times \mathcal{I} \times \Omega)} + \skpr{h(t = T), p_1(t = T)}_{L^2(\mathcal{I} \times \Omega)} - \skpr{h(t = 0), p_1(t = 0)}_{L^2(\mathcal{I} \times \Omega)}\\
	&\quad + \skpr{h(a = \amax), p_1(a = \amax)}_{L^2((0, T) \times \Omega)} - \skpr{h(a = 0), p_1(a = 0)}_{L^2((0, T) \times \Omega)}\\
	&\quad + \skpr{h, L\tran p_1} + \skpr{\Lambda(a, x, h) y + \Lambda(a, x, y) h + \tilde{K}(u) h, p_1}_{L^2((0, T) \times \mathcal{I} \times \Omega)}\\
	&\quad - \skpr{h, \sigma(a) \laplace p_1}_{L^2((0, T) \times \mathcal{I} \times \Omega)} - \skpr{\partial_\nu h, p_1}_{L^2((0, T) \times \mathcal{I} \times \partial \Omega)} + \skpr{h, \partial_\nu p_1}_{L^2((0, T) \times \mathcal{I} \times \partial \Omega)}\\
	&\quad + \skpr{h(t = 0) - h_0, p_2}_{L^2(\mathcal{I} \times \Omega)} + \skpr{h(a = 0) - \int_0^\amax \beta(\alpha, x) h(t, \alpha, x) \dd{\alpha}, p_3}_{L^2((0, T) \times \Omega)}.
\end{align}
Recalling  $\Lambda(h)^{\beta \gamma}(t, a, x) = \int_0^\amax \int_\Omega k^{\beta \gamma \delta}(a, x, \alpha, \xi) h_\delta(t, \alpha, \xi) \dd{\xi} \dd{\alpha}$  and setting $z \coloneqq (a, x)$, $\zeta \coloneqq (\alpha, \xi)$, we can rewrite the term with $\Lambda(h) y$ as \begin{align}
	\skpr{\Lambda(h) y, p_1} &= \int_0^T \int \int k^{\beta \gamma \delta}(z, \zeta) h_\delta(t, \zeta) \dd{\zeta} y_\gamma(t, z) p_{1 \beta}(t, z) \dd{z} \dd{t}\\
	&= \int_0^T \int h_\delta(t, \zeta) \int k^{\beta \gamma \delta}(z, \zeta)  y_\gamma(t, z) p_{1 \beta}(t, z) \dd{z} \dd{\zeta} \dd{t}\\
	&\eqqcolon \skpr{h, \tilde{\Lambda}_y(p_1)}
\end{align}
where $\tilde{\Lambda}_y$ is a nonlocal linear operator. Further, the term with the birth condition can be rewritten as \begin{align}
	\skpr{\int_0^\amax \beta(\alpha) h(\alpha) \dd{\alpha}, p_3}_{L^2((0, T) \times \Omega)} &= \iiint \beta(\alpha, \xi) h(\theta, \alpha, \xi) \cdot p_3(\theta, \xi) \dd{(\theta, \alpha, \xi)}\\
	&= \skpr{h, \beta\tran p_3}_{L^2((0, T) \times \mathcal{I} \times \Omega)}
\end{align}
Therefore, we can write
\begin{align}
	-\skpr{g y(u), gh} &= - \skpr{\delta p_1, h} + \skpr{p_1(t = T), h(t = T)}_{L^2(\mathcal{I} \times \Omega)} - \skpr{p_1(t = 0), h(t = 0)}_{L^2(\mathcal{I} \times \Omega)}\\
	&\quad + \skpr{p_1(a = \amax), h(a = \amax)}_{L^2((0, T) \times \Omega)} - \skpr{p_1(a = 0), h(a = 0)}_{L^2((0, T) \times \Omega)}\\
	&\quad + \langle (L + \Lambda(y(u)) + \tilde{K}(u))\tran p_1+ \tilde{\Lambda}_{y(u)}(p_1)- \sigma \laplace p, h \rangle_{L^2((0, T) \times \mathcal{I}, V'), L^2((0, T) \times \mathcal{I}, V)} \\
	&\quad + \skpr{p_2, h(t = 0)}_{L^2(\mathcal{I}, H)} + \skpr{p_3, h(a = 0)}_{L^2((0, T), H)} - \skpr{\beta\tran p_3, h}_{L^2((0, T) \times \mathcal{I} \times \Omega)},
\end{align}
which represents the weak formulation of the adjoint equation. Setting $p \coloneqq p_1$ and using the fact that $p_2 = p(t = 0)$ and $p_3 = p(a = 0)$, we deduce that 
\begin{gather}
\label{eq:adoint_equation}
	- \delta p  + (L + \Lambda(y(u)) + \tilde{K}(u))\tran p + \tilde{\Lambda}_{y(u)}(p) - \sigma \laplace p - \beta(a, x)\tran p(a = 0) = -g\tran g y(u),\\
	p(t = T) = 0, \quad p(a = \amax) = 0,\quad \partial_\nu p(x \in \partial \Omega) = 0,
\end{gather}
which is the adjoint equation. Thus, we have completed the derivation of the adjoint equation.

\begin{Bem}    
Interestingly, a term of the form $\beta(a, x)\tran p(a = 0)$ naturally arises in the equation due to the implicit boundary condition. Notably, a similar term, $\beta\tran p(a = 0)$, appears in the adjoint equation for a class of optimal control problems governed by age-structured models without spatial variable $x$ (ODEs), as shown in \cite[Thm. 4.11]{AnitaArnautuCapasso}.
\end{Bem}

\begin{Thm}Suppose that Assumption \ref{assump:non} holds. Then the adjoint equation \eqref{eq:adoint_equation} has a unique solution in $Y$.
\end{Thm}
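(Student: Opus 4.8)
The plan is to turn the backward transport--parabolic adjoint system \eqref{eq:adoint_equation} into a \emph{forward} problem of the type already handled in Section 2 and in the proof of Lemma \ref{lem:bounded_inverse}, and then to dispose of the two genuinely new ingredients: the transposed nonlocal operator $\tilde\Lambda_{y(u)}$ and the trace term $\beta(a,x)\tran p(a=0)$. Write $\mathcal{M}\coloneqq L+\Lambda(y(u))+\tilde{K}(u)$, which by Assumption \ref{assump:non} is multiplication by an $L^\infty((0,T)\times\mathcal{I}\times\Omega)$-matrix, and write $\widehat{(\cdot)}$ for composition of a coefficient, operator, or function with the reflection $(t,a)\mapsto(T-t,\amax-a)$. \textbf{Step 1 (reversal).} Setting $q(t,a,x)\coloneqq p(T-t,\amax-a,x)$ gives $\delta q(t,a,x)=-(\delta p)(T-t,\amax-a,x)$, so \eqref{eq:adoint_equation} turns into the forward equation $\delta q+\widehat{\mathcal{M}}\tran q+\widehat{\tilde\Lambda}_{y(u)}(q)-\widehat{\sigma}\,\laplace q=-\widehat{g\tran g\,y(u)}+\widehat{\beta}\tran q(a=\amax)$ with the homogeneous \emph{initial} conditions $q(t=0)=q(a=0)=0$ and $\partial_\nu q=0$ on $\partial\Omega$. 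Since $p\mapsto q$ is an isometric automorphism of $Y$, it suffices to produce a unique $q\in Y$ solving this. The reflected coefficients $\widehat{L},\widehat{\sigma},\widehat{\beta}$ still satisfy Assumptions \ref{assum:Lin} and \ref{assum:LinImp} (continuity in age and the uniform lower bounds survive $a\mapsto\amax-a$, and transposition only permutes matrix entries), so Theorem \ref{thm:linReg} still applies to the purely linear part $\delta q+\widehat{L}\tran q-\widehat{\sigma}\laplace q=F$.

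\textbf{Step 2 (nonlocal term, frozen trace).} The operator $\tilde\Lambda_y$, read off from $\skpr{\Lambda(h)y,p_1}=\skpr{h,\tilde\Lambda_y(p_1)}$, is the nonlocal (in age and space, local in time) linear operator with kernel $(a,x,\alpha,\xi)\mapsto k^{\beta\gamma\delta}(\alpha,\xi,a,x)$, which still lies in $L^\infty(\mathcal{I}\times\Omega,\hilbert)$; hence Assumption \ref{assump:non} gives $\norm{\tilde\Lambda_{y(u)}(q)}_{\hilbert}\leqc\norm{y(u)}_{\hilbert}\norm{q}_{\hilbert}$ and, after integration in time, the same small-time bound as \eqref{eq:Lambdaestimate}. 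I would then mimic the proof of Lemma \ref{lem:bounded_inverse}: freeze the trace $r(t,x)\coloneqq q(t,\amax,x)\in L^2((0,T),H)$ as a datum, move $\widehat{\mathcal{M}}\tran q$, $\widehat{\tilde\Lambda}_{y(u)}(q)$ and $\widehat{\beta}\tran r$ to the right-hand side, invoke Theorem \ref{thm:linReg} for the linear part, and close a Banach fixed point on a short interval $[0,T^*]$ using the small-time factors and \eqref{eq:Lambdaestimate}. The uniform energy estimate from \eqref{eq:yEstimate}, \eqref{eq:Lambdaestimate} and Gronwall then extends this to a global solution $q=q(r)\in Y$ that is affine and bounded in $r$, say $q(r)=q_0+\mathcal{T}r$ with $\mathcal{T}\in\mathcal{L}(L^2((0,T),H),Y)$.

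\textbf{Step 3 (closing the trace loop).} It then remains to solve the fixed-point equation $r=q_0(\cdot,\amax,\cdot)+(\mathcal{T}r)(\cdot,\amax,\cdot)\eqqcolon r_\ast+\mathcal{R}r$ in $L^2((0,T),H)$. Every term handled in Step 2 is local in time, so $r\mapsto q(r)$ is causal; unwinding the fixed-point iteration with the Duhamel representation of Remark \ref{Duhamel} (admissible since $\widehat{\beta}\tran r$ is $H$-valued) exhibits $(\mathcal{R}r)(t)=\int_{0}^{t}\mathcal{K}(t,s)\,r(s)\dd{s}$ with an operator-valued kernel $\mathcal{K}$ bounded on $\set{0\le s\le t\le T}$. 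Hence $\mathcal{R}$ is a Volterra integral operator with bounded kernel on a bounded interval, so $\norm{\mathcal{R}^n}\leqc(cT)^n/n!$ and $I-\mathcal{R}$ is boundedly invertible on $L^2((0,T),H)$ by a Neumann series --- the same mechanism used for the Volterra equations of Theorem \ref{Thm:Bexistence} and Lemma \ref{lem:bounded_inverse} via \cite[Cor. 0.2]{Pruess}. Then $r=(I-\mathcal{R})^{-1}r_\ast$ is the unique trace, and $p(t,a,x)\coloneqq q(r)(T-t,\amax-a,x)\in Y$ is the unique solution of \eqref{eq:adoint_equation}, uniqueness following from that in Step 2 together with the invertibility of $I-\mathcal{R}$.

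\textbf{Main obstacle.} The decisive new difficulty is the trace term $\beta\tran p(a=0)$: a naive short-time contraction for the full equation does not close, because $\norm{q(\cdot,\amax,\cdot)}_{L^2((0,T^*),H)}$ is controlled by $\norm{q}_Y$ but \emph{not} by $\sqrt{T^*}$ times $\norm{q}_{C([0,T^*],\hilbert)}$, and the $\widehat{\beta}\tran$-factor is not small; it must therefore be treated as an implicit trace condition whose Volterra-in-time structure is exploited, in the same way the implicit birth law is treated on the way to Theorem \ref{Thm:Bexistence}. The two points needing care are that the transposed kernel of $\tilde\Lambda_{y(u)}$ genuinely falls under Assumption \ref{assump:non}, and that $\mathcal{R}$ is a bona fide Volterra \emph{integral} operator rather than merely causal, so that $I-\mathcal{R}$ is invertible. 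As a shortcut one may instead observe that $e_y(y(u),u)$ has a bounded inverse (Lemma \ref{lem:bounded_inverse}), hence so does its Banach-space adjoint, which yields existence and uniqueness of a weaker solution $p\in Z'$; but Steps 1--3 are still required to bootstrap that solution into $Y$.
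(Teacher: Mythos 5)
Your proposal is correct and follows essentially the same route as the paper: the time--age reversal $p(T-t,\amax-a,x)$, the reduction of the boundary/trace term $\beta\tran p(a=0)$ to a Volterra equation in time for the age-$\amax$ trace (solved via the Neumann-series/Pr\"uss mechanism of Theorem \ref{Thm:Bexistence}), and a Banach fixed point as in Lemma \ref{lem:bounded_inverse} for the transposed nonlocal and control terms. The only cosmetic difference is the order of operations --- the paper solves the Volterra equation on the purely linear part first and adds $\tilde\Lambda_{y(u)}$ and $\tilde K(u)\tran$ afterwards, whereas you freeze the trace, contract first, and close the trace loop last --- and your identification of the trace term as the obstacle that defeats a naive short-time contraction matches the paper's reasoning.
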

\begin{proof}
	Setting  $h(t, a) \coloneqq p(T-t, \amax-a)$, the adjoint equation can be rewritten as \begin{gather}
    \begin{multlined}
        \delta h + (L +\Lambda(y) + \tilde{K}(u))\tran(T - t, \amax - a) h + \tilde{\Lambda}_{y(T - t, \amax - a)}(h) - \sigma(\amax - a) \laplace h\\
        - \beta(\amax - a)\tran h(a = \amax) = -g\tran g y(T - t, \amax - a),
    \end{multlined}	\\
		h(t = 0) = 0,\quad h(a = 0) = 0,\quad \partial_\nu h(x \in \partial \Omega) = 0.
	\end{gather}
Comparing this equation with \eqref{eq:linearizedEquation}, we observe that they share a similar form, except for the initial conditions at $a = 0$ and $t =0$, which in this case are zero, and a term $\int_0^\amax \beta h \dd{a}$ on the boundary which leads to the term $\beta (\amax - a)\tran h(t, \amax, x)$ in \eqref{eq:adoint_equation}. Therefore, to prove the theorem, we can apply similar arguments to those in the proof of Lemma \ref{lem:bounded_inverse}, with only slight adaptations. More specifically, we can easily obtain a solution $h$ of the linearized equation
\begin{gather}
    \begin{multlined}
        \delta h + L\tran(T - t, \amax - a) h - \sigma(\amax - a) \laplace h - f = -g\tran g y(T - t, \amax - a),
    \end{multlined}	\\
		h(t = 0) = 0,\quad h(a = 0) = 0,\quad \partial_\nu h(x \in \partial \Omega) = 0,
	\end{gather}
as described in the proof of Corollary \ref{cor:WeakSolution}. Similarly to Lemma \ref{lem:well_linear}, we obtain evolution operators $\tilde{U}$ and $\tilde{S}$ for the equation along characteristics, which we can use to represent the solution as 
\begin{equation} \label{eq:AdjRepresentation}
		h(t, a, x) = \begin{cases}
			\tilde{S}(a, 0) \restr{f}_{\chak(t-a)} & t > a\\
			\tilde{S}(a, a-t) \restr{f}_{\chak(t-a)} & t \leq a
		\end{cases}.
	\end{equation}

The next step is to replace $f$ in this equation with $f + \beta\tran b$, where   $b \in L^2((0, T), H)$ is such that $b = h(a = \amax)$. Substituting this into \eqref{eq:AdjRepresentation} and applying Duhamel's principle from Remark \ref{Duhamel}, we obtain the Volterra equation
\begin{align}
		b(t) &= \tilde{S}(\amax, (\amax - t)^+) \restr{f}_{\chak(t)-\amax} + \int_{(\amax - t)^+}^\amax \tilde{U}(\amax, r)  \beta(\amax-r)\tran b(t-\amax+r) \dd{r}\\
		&= \tilde{S}(\amax, (\amax - t)^+) \restr{f}_{\chak(t)-\amax} + \int_0^{\min(t, \amax)} \tilde{U}(\amax, \amax - s)  \beta(s)\tran b(t-s) \dd{s}.
	\end{align}
	A solution $b \in L^2((0, T), H)$ can then be found as in Theorem \ref{Thm:Bexistence}. The remainder of the proof, specifically the inclusion of the missing terms, follows similarly to the proof of Lemma ~\ref{lem:bounded_inverse}.
\end{proof}

The first-order necessary optimality conditions are then given by the following result.

\begin{Kor} 
Suppose that Assumption \ref{assump:non} holds, and let the pair $(y^*, u^*) \in Y \times U^{ad}$ be a solution to \eqref{Opt}. Then, this pair satisfies the following variational inequality
\begin{equation}
\begin{split}
		&\alpha (u^*, u - u^*)_{L^2((0,T),\R^{M \times N})} + (p^*, K(u - u^*) y^*)_{L^2((0, T), \hilbert)} \\ & = (\alpha u^* + \tilde{K}^*\left( \operatorname{diag}(y^* \odot p^*)\right), u - u^*)_{L^2((0,T),\R^{M \times N})} \geq 0  \quad  \text{ for all }  u \in U^{ad},
\end{split}
 	\end{equation}
    where $\tilde{K}^*$ is the adjoint operator of $\tilde{K}$, $y^* = y(u^*)$ is the solution to \eqref{eq:StateEquation} corresponding to $u^*$, and $p^*$ denotes the solution of the adjoint equation \eqref{eq:adoint_equation} associated with $u^*$ and $y^*$. The term $\operatorname{diag}(y^* \odot p^*)$ represents a diagonal matrix with entries  $y^*_1 \cdot p^*_1, \ldots, y^*_n \cdot p^*_n$ on the main diagonal and zeros elsewhere.   
\end{Kor}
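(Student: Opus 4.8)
The plan is to recast the problem in reduced form and apply the standard adjoint calculus for PDE-constrained optimization, following \cite[Sec.~1.7.2]{HPUU}. By Theorem~\ref{Thm:existence_Non} the control-to-state map $u \mapsto y(u)$ is well defined on $U^{\mathrm{ad}}$; moreover $J$ and $e$ are continuously Fréchet differentiable (as argued above) and $e_y(y(u),u)$ has a bounded inverse by Lemma~\ref{lem:bounded_inverse}. Hence the implicit function theorem applies and shows that $u \mapsto y(u)$ is continuously Fréchet differentiable with $y'(u)k = -e_y(y(u),u)^{-1} e_u(y(u),u)k$, so that the reduced functional $\hat J(u) := J(y(u),u)$ is continuously Fréchet differentiable with
\[
  \hat J'(u)\,k = J_u(y(u),u)\,k + J_y(y(u),u)\,y'(u)k .
\]

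Next I would bring in the adjoint state. By the preceding theorem on well-posedness of the adjoint equation, for $u^*$ with associated state $y^* = y(u^*)$ there is a unique $p^* \in Y$ solving $e_y(y^*,u^*)^*p^* = -J_y(y^*,u^*)$, which is precisely equation~\eqref{eq:adoint_equation}. Using the definition of the Banach-space adjoint together with $e_y(y^*,u^*)\,y'(u^*)k = -e_u(y^*,u^*)k$ gives
\[
  J_y(y^*,u^*)\,y'(u^*)k = \langle e_y(y^*,u^*)^*p^*,\, -y'(u^*)k\rangle = -\langle p^*,\, e_y(y^*,u^*)\,y'(u^*)k\rangle = \langle p^*,\, e_u(y^*,u^*)k\rangle .
\]
Since $e_u(y^*,u^*)k = (\tilde K(k)\,y^*, 0, 0)$ and $J_u(y^*,u^*)k = \alpha\,(u^*,k)_{L^2((0,T),\R^{M\times N})}$, we obtain $\hat J'(u^*)k = \alpha\,(u^*,k)_{L^2((0,T),\R^{M\times N})} + (p^*, \tilde K(k)\,y^*)_{L^2((0,T),\hilbert)}$.

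It then remains to rewrite the coupling term as an $L^2((0,T),\R^{M\times N})$ pairing in $k$. Because $K$ is linear in its argument and $\tilde K$ only substitutes the finite-dimensional representation~\eqref{eq:FormControl}, the map $k \mapsto (p^*,\tilde K(k)\,y^*)_{L^2((0,T),\hilbert)}$ is a bounded linear functional on $L^2((0,T),\R^{M\times N})$; unwinding the indicator-function structure in space and age, and noting that componentwise $K(u)y$ equals $u$ times a fixed linear combination of the components of $y$, identifies this functional with $(\tilde K^*(\operatorname{diag}(y^*\odot p^*)),\, k)_{L^2((0,T),\R^{M\times N})}$, with the entrywise product $y^*\odot p^*$ arising from the componentwise multiplication by the state. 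Finally, since $U^{\mathrm{ad}}$ is nonempty, convex and closed and $u^*$ minimizes $\hat J$ over it, for every $u \in U^{\mathrm{ad}}$ the scalar function $t \mapsto \hat J(u^* + t(u-u^*))$ on $[0,1]$ has nonnegative right derivative at $t=0$, i.e.\ $\hat J'(u^*)(u-u^*) \ge 0$; inserting the formula for $\hat J'(u^*)$ yields the asserted variational inequality.

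The step I expect to be the main obstacle is not conceptual but the careful identification of $\tilde K^*(\operatorname{diag}(y^*\odot p^*))$: one has to track how the piecewise-constant-in-$(x,a)$ structure of the admissible controls interacts with the fixed matrix defining $K$ and with the componentwise multiplication by $y^*$, and verify that the resulting linear functional of $k$ is exactly the stated finite-dimensional $L^2$ pairing. Everything else is a direct application of the results already assembled — Theorem~\ref{Thm:existence_Non}, Lemma~\ref{lem:bounded_inverse}, and the adjoint well-posedness theorem — together with the convexity of $U^{\mathrm{ad}}$.
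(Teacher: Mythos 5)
Your proposal is correct and follows essentially the same route as the paper: the paper simply invokes \cite[Cor.~1.3]{HPUU}, whose proof is exactly the reduced-functional/adjoint-representation argument you write out, applied to the ingredients already assembled (Theorem~\ref{Thm:existence_Non}, Lemma~\ref{lem:bounded_inverse}, and the well-posedness of the adjoint equation). Your additional care in identifying $\tilde K^*(\operatorname{diag}(y^*\odot p^*))$ from the indicator-function structure of the controls is a reasonable unpacking of a step the paper leaves implicit.
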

\begin{proof}
    The proof follows directly from \cite[Cor. 1.3]{HPUU}.
\end{proof}

\section{Numerical Experiments}
This section reports on the numerical implementation of the optimal control problem \eqref{Opt} governed by \eqref{eq:svir} model. The spatial domain was chosen as the interval $\Omega =[0,1]$. We applied a standard finite difference scheme for spatial discretization with some step size $\Delta x$. The nonlocal integral operator $\Lambda(\cdot) $ was approximated using the trapezoidal rule, applied twice, once for integration over the spatial domain and once over the age interval $\mathcal{I}$. The birth law was approximated similarly. Then, following the approach used in our existence results for \eqref{eq:linear_with_age}, we computed the numerical solution by restricting the equation to characteristic lines, transforming it into an ordinary differential equation of the form
\begin{equation}
\delta y = F(t, a, y).
\end{equation}
To tackle this, we applied the Crank–Nicolson method for temporal/age discretization, which yields the following scheme
\begin{equation}
		\frac{y(t + \Delta t, a + \Delta t) - y(t, a)}{\Delta t} = \frac 12 F(t, a, y(t, a)) + \frac 12 F(t + \Delta t, a + \Delta t, y(t + \Delta t, a + \Delta t)),
\end{equation} 
 where $\Delta t$ denotes the temporal step size. The nonlinear terms were treated explicitly using the Adams-Bashforth method; see, e.g., \cite{HNW93}. Notably, the same step size was chosen for both time and age discretization. This choice, while convenient, poses challenges for real-world simulations: typically, $\Delta t$ is on the order of days or less, while the maximum age $\amax$ in $I$ can span several decades. As a result, the age variable requires very fine discretization, potentially leading to the curse of dimensionality.

To solve problem \eqref{Opt}, we followed the discretize-then-optimize approach, employing the projected gradient method for the associated reduced problem defined as
\begin{equation}
 \min_{u \in U^{ad}} \mathcal{J}(u) \coloneqq \min_{u \in U^{ad}} J(u,y(u)),
\end{equation}
where $y(u)$ is the unique solution to \eqref{eq:StateEquation} corresponding to the control $u$. More precisely,  we used the iterative update rule
 \begin{equation}
u^{k+1} = P_{U^{ad}}(u^k -\alpha_k\mathcal{J}'(u^k) ),   \quad \text{ for  } k\geq 0 
\end{equation}
where $P_{U^{ad}}$ stands for the orthogonal projection into $U^{ad}$ and $\mathcal{J}'$ denotes the gradient of the reduced problem and the step size $\alpha_k$ was determined using a non-monotone line search algorithm \cite{AB23} which uses the Barziali-Borwein step sizes \cite{Azmi, BB88} corresponding to $\mathcal{J}$ as the initial trial step size.  The optimization algorithm was terminated when the norm of the difference between two successive iterations, divided by the norm of the previous iteration, was less than $10^{-8}$.

\begin{table}[h]
    \centering
    \begin{tabular}{c|c|c}
    Parameter & Description & Value\\\hline
    $T$ & Maximal time & 5\\
    $\amax$ & Maximum age & 1\\
    $\alpha$ & Control cost parameter & 500\\
    $c$ & Loss of vaccine immunity & 0.18564\\
    $\mu$ & Natural death rate & $e^{-a} \cdot a^5$\\
    $\phi_1$ & Vaccine protection from infection & 0.0052\\
    $\phi_2$ & Recovery protection from infection & 0.00062\\
    $\delta$ & Infection death rate & 0.0018\\ 
    $\gamma$ & Recovery rate & 0.278574\\
    $\lambda$ & Infection rate kernel & $(0.1 - \abs{x - \xi})^+$\\
    $\beta$ & Birth rate & $\frac{6.78}{\amax} a^2 (\amax - a) (1 + \sin(\pi \frac{a}{\amax}))$\\
    $\sigma_S$ & Susceptible diffusion coefficient & $0.1 e^{-0.1 a}$ \\
    $\sigma_V$ & Vaccinated diffusion coefficient & $0.1 e^{-0.1 a}$ \\
    $\sigma_I$ & Infective diffusion coefficient & $0.05 e^{-0.1 a}$ \\
    $\sigma_R$ & Recovered diffusion coefficient & $0.1 e^{-0.1 a}$ \\
    \end{tabular}
    \caption{Parameter Setting}
    \label{tab:values}
\end{table}
Throughout the numerical simulation, we used the parameters listed in Table \ref{tab:values}. Most of these parameters are taken from \cite{AbRaSch} and adjusted for our purposes. The birth and death rates are adopted from \cite[p. 155]{AnitaArnautuCapasso}. As initial conditions, we assume a population uniformly distributed across age and space, consisting of 1000 susceptible and 10 infectious individuals. The control is assumed to act only in the central region of the domain, $\Omega_1 = (0.45, 0.55)$, and uniformly across the age intervals defined by $a_0 = 0$, $a_1 = 0.18$, $a_2 = 0.3$, $a_3 = 0.5$, $a_4 = 0.7$, and $a_5 = 1$. Consequently, we set $N = 5$ and $M = 1$.

\begin{Example}
	\label{Exp:1}
	In this example, we set $\Delta x = 0.01$, $\Delta t = 0.005$, and $\bar{u}=10$. The structure of the optimal control for different age classes is illustrated in Figure \ref{fig:HugeStructure}, where each strip corresponds to an age class. As shown, it is preferable to administer the vaccine during the early stages of the epidemic, and in later stages, prioritize younger age groups over older ones.
	
	The evolution of the total number of infectious individuals, calculated by \begin{equation}
		\int_0^\amax \int_\Omega I(t, a, x) \dd{x} \dd{a}
	\end{equation} for both the controlled and uncontrolled ($u = 0$) cases,  is depicted in Figure \ref{fig:HugeNumber}. It can be observed that control measures or vaccination effectively reduce the number of infectious individuals over time, as desired. Figures \ref{fig:HugeControlled} and \ref{fig:HugeUncontrolled} present snapshots of the four compartments at the final time for the controlled and uncontrolled cases, respectively. Comparing these figures, a visible dent appears in the optimal state $I$. Additionally, the graph of the number of infected individuals shows a bump in the middle, reflecting the fact that the optimal control strategy favors vaccinating younger individuals.
	\end{Example}

\begin{figure}[H]
\centering
\begin{subfigure}{.8\textwidth}
    \includegraphics[width=\linewidth]{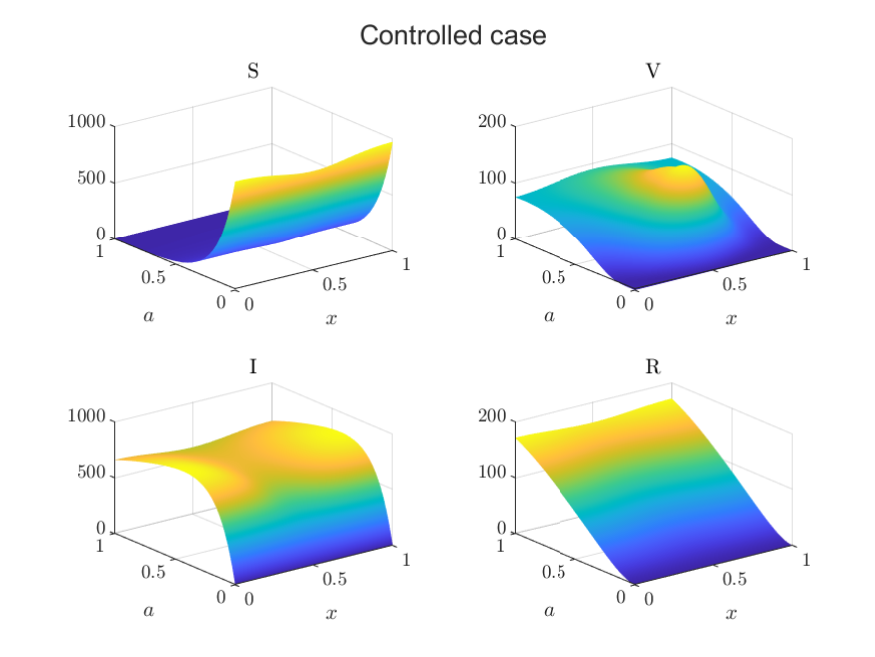}
    \caption{Controlled state at the final time}
    \label{fig:HugeControlled}
\end{subfigure}
\begin{subfigure}{.8\textwidth}
    \includegraphics[width=\linewidth]{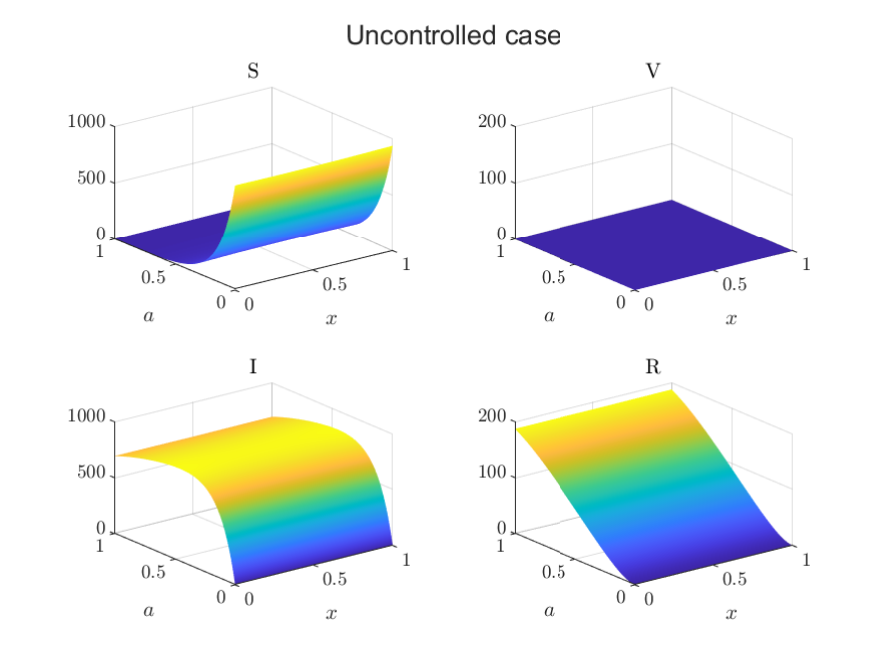}
    \caption{Uncontrolled state at the final time}
    \label{fig:HugeUncontrolled}
\end{subfigure}
\caption{Numerical results of Example \ref{Exp:1}}
\label{fig:HugeResults}
\end{figure}

\begin{figure}[H] \ContinuedFloat
	\centering
	\begin{subfigure}{.45\textwidth}
		\includegraphics[width=\linewidth]{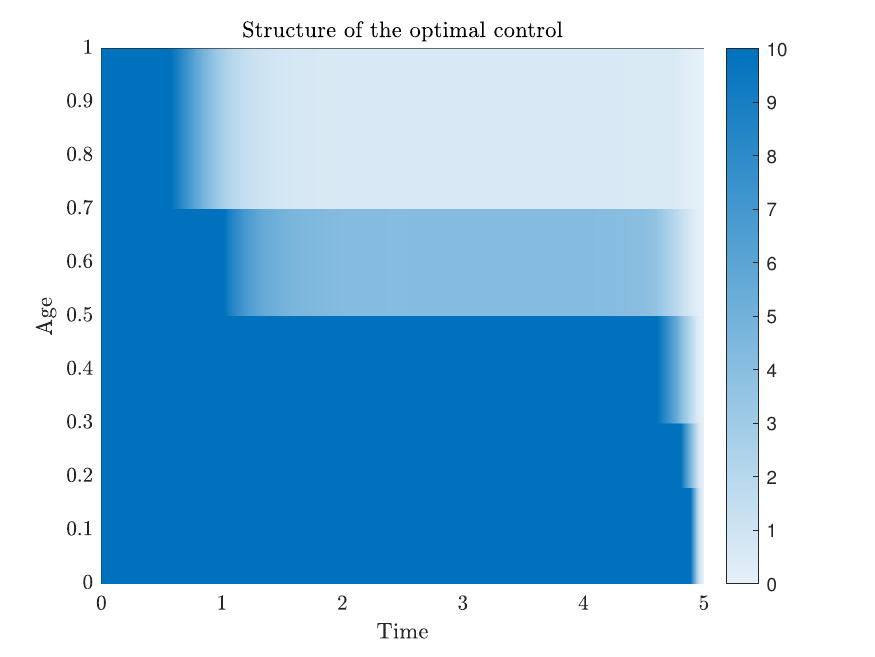}
		\caption{Optimal control structure, with darker blue indicating higher values of $u$.}
		\label{fig:HugeStructure}
	\end{subfigure}
	\begin{subfigure}{.45\textwidth}
		\includegraphics[width=\linewidth]{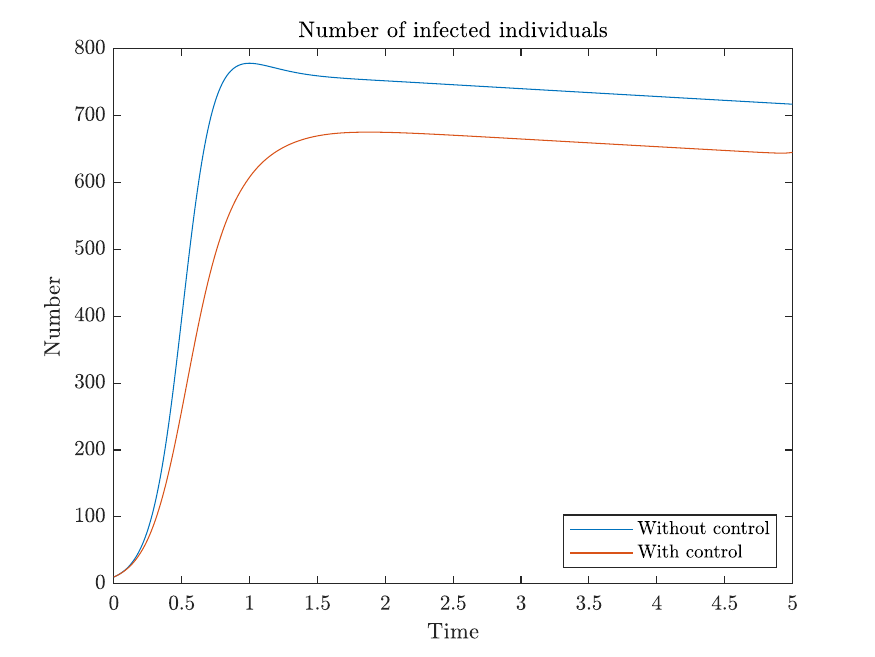}
		\caption{Total number of infectious individuals: blue — controlled, red — uncontrolled.}
		\label{fig:HugeNumber}
	\end{subfigure}
\end{figure}

\FloatBarrier

\begin{Example}
\label{Exp:2}
It is insightful to vary $\bar{u}$ and observe how it impacts the optimal cost and state, as a higher control bound is more effective but also more expensive. In this example, we explore this trade-off. To speed up computations, we used a coarser grid with $\Delta x = 0.1$ and $\Delta t = 0.01$, and computed the optimal control for $\bar{u} \in \{10, 20, 50, 80\}$. The corresponding results are presented in Figure \ref{fig:VarUmaxResults} and Table \ref{tab:VarUmaxTarget}.

Comparing the top-left plot in Figure \ref{fig:VarUmaxControl} with Figure \ref{fig:HugeStructure}, we observe that the results are consistent, and different mesh sizes do not significantly influence the outcomes.

From Table \ref{tab:VarUmaxTarget}, we see that as $\bar{u}$ increases, the optimal cost functional decreases, indicating that maintaining a high vaccination rate can be beneficial despite the associated costs. However, for $\bar{u} = 80$, the control reaches a maximum value of approximately $62.2$, suggesting that beyond a certain point, increasing $\bar{u}$ provides no additional benefit. Most control values lie below $50$, which explains why the optimal controls for $\bar{u} = 80$ and $\bar{u} = 50$ are nearly identical. In fact, the curves representing the total number of infected individuals for these two cases in Figure \ref{fig:VarUmaxNumber} completely overlap.
\end{Example}

\begin{table}[h]
    \centering
    \begin{tabular}{r|r}
        $\bar{u}$ & $J(y(\bar{u}), \bar{u})$ \\\hline
          0 & 1111450\\
         10 &  896491\\
         20 &  822924\\
         50 &  787393\\
         80, $\infty$ & 787382 
    \end{tabular}
    \caption{Optimal value of the cost  functional for different values of $\bar{u}$ from Example \ref{Exp:2}}
    \label{tab:VarUmaxTarget}
\end{table}

\begin{figure}[h]
\centering
\begin{subfigure}{0.8\linewidth}
    \includegraphics[width=\linewidth]{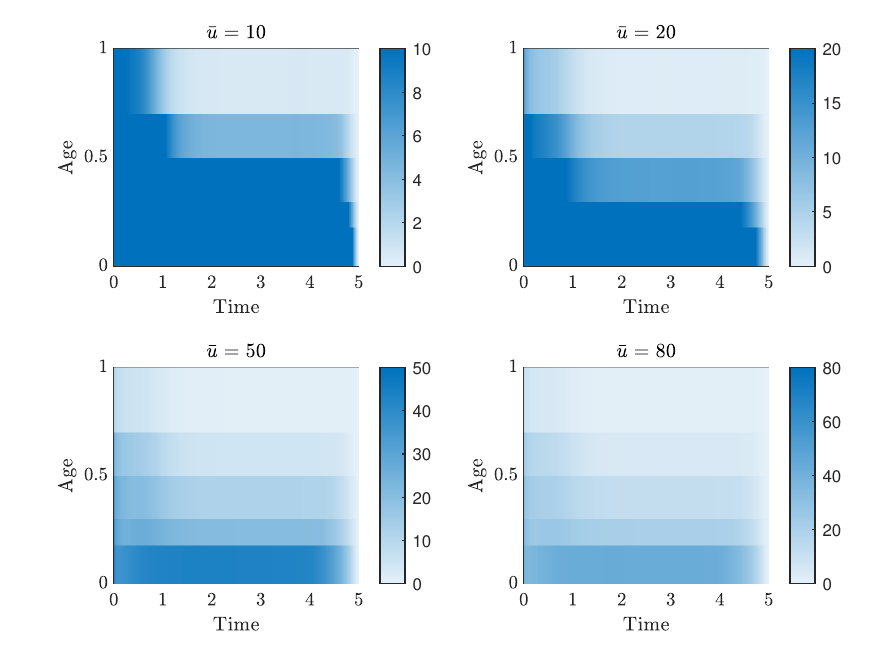}
    \caption{Optimal control structure for different values of $\bar{u}$}
    \label{fig:VarUmaxControl}
\end{subfigure}
\end{figure}
\begin{figure}[h]\ContinuedFloat
\centering
\begin{subfigure}{0.8\linewidth}
    \includegraphics[width=\linewidth]{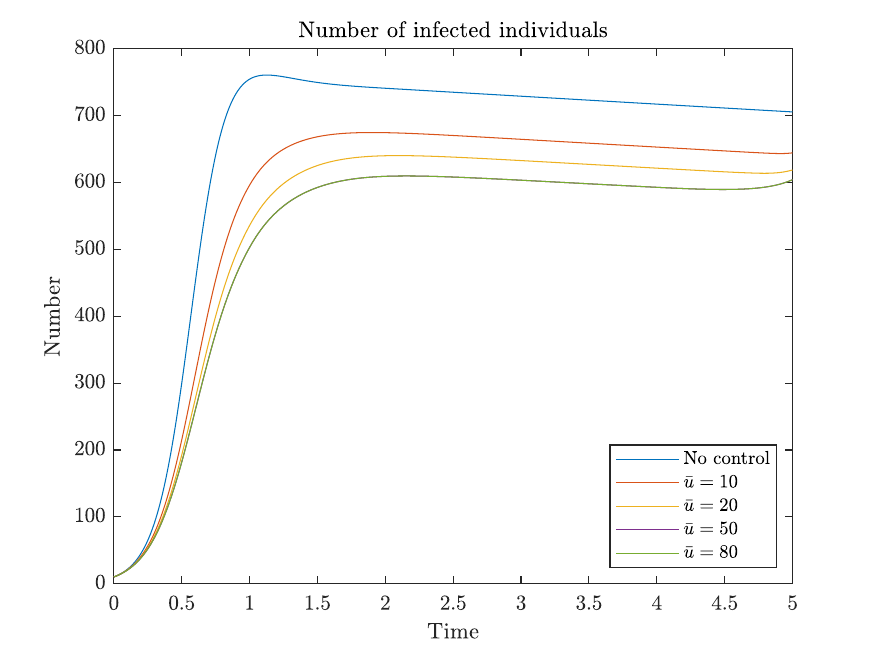}
    \caption{Total number of infectious individuals for different values of $\bar{u}$}
    \label{fig:VarUmaxNumber}
\end{subfigure}
\caption{Comparison of different values for $\bar{u}$ of Example \ref{Exp:2}}
\label{fig:VarUmaxResults}
\end{figure}

\section*{Conclusions}

In this work, we analyzed a class of nonlinear epidemic models incorporating age and spatial structures, with a nonlocal infection term dependent on age, space, and potentially time. We established the well-posedness of the state partial differential equation. By introducing the vaccination rate as a control parameter, we addressed the existence of an optimal control under suitable conditions and characterized it through first-order optimality conditions. Numerical examples illustrated the behavior of the optimal control strategies, highlighting the practical significance of the theoretical results.

From a practical perspective, exploring infinite-horizon optimal control for age-structured nonlinear epidemic models would be a valuable future direction. Another promising avenue for future research is the design of feedback control laws that dynamically adjust based on the system state, enabling real-time steering of the epidemic dynamics. In managing pandemics, long-term control strategies and robust mechanisms that adapt to perturbations and evolve over time are often essential. One effective framework is receding horizon control (RHC), also known as model predictive control (MPC), which approximates the solution to an infinite-horizon problem through a sequence of finite-horizon ones, continuously refining the control strategy via feedback. However, ensuring the stabilizability of this control remains a significant challenge that requires further investigation.\\

\textbf{Acknowledgements} The authors appreciate and acknowledge Prof. Reinhard Racke for his helpful comments and insights on this manuscript.

\bibliographystyle{abbrvurl}
\bibliography{bibliography}
\end{document}